\documentclass[12pt,reqno]{amsart}
\usepackage[colorlinks,urlcolor=blue,citecolor=red,pagebackref,hypertexnames=false]{hyperref}

\makeatletter
\def\@tocline#1#2#3#4#5#6#7{\relax
  \ifnum #1>\c@tocdepth 
  \else
    \par \addpenalty\@secpenalty\addvspace{#2}%
    \begingroup \hyphenpenalty\@M
    \@ifempty{#4}{%
      \@tempdima\csname r@tocindent\number#1\endcsname\relax
    }{%
      \@tempdima#4\relax
    }%
    \parindent\z@ \leftskip#3\relax \advance\leftskip\@tempdima\relax
    \rightskip\@pnumwidth plus4em \parfillskip-\@pnumwidth
    #5\leavevmode\hskip-\@tempdima
      \ifcase #1
       \or\or \hskip 1em \or \hskip 2em \else \hskip 3em \fi%
      #6\nobreak\relax
    \dotfill\hbox to\@pnumwidth{\@tocpagenum{#7}}\par
    \nobreak 
    \endgroup
  \fi}
\makeatother

\usepackage{tikz,amsthm,amsmath,amstext,amssymb,amscd,epsfig,euscript, mathrsfs, dsfont,pspicture,multicol,graphpap,graphics,graphicx,times,enumerate,subfig,sidecap,wrapfig,color}

\usepackage{ hyperref}
\usepackage{enumerate}

 \numberwithin{equation}{section}


\def\bB{{\mathbb{B}}}
\def\bC{{\mathbb{C}}}

\def\bR{{\mathbb{R}}}

\def\bN{{\mathbb{N}}}
\def\bQ{{\mathbb{Q}}}

\def\cH{{\mathscr{H}}}

\def\one{\mathds{1}}

\def\ve{\varepsilon}

\renewcommand{\d}{{\partial}}
\def\lec{\lesssim}
\def\gec{\gtrsim}

\def\ext{\mathop\mathrm{ext}} 	
 	
\DeclareMathOperator{\diam}{diam}
\def\Cap{\mathop\mathrm{Cap}} 					
\def\Tan{\mathop\mathrm{Tan}} 					
\def\dim{\mathop\mathrm{dim}} 					
\def\dist{\mathop\mathrm{dist}} 						
\def\supp{\mathop\mathrm{supp}}					

\newcommand{\ps}[1]{\left( #1 \right)}

\newcommand{\ck}[1]{\left\{#1 \right\}}
\newcommand{\av}[1]{\left| #1 \right|}

\newcommand{\cnj}[1]{\overline{#1}}

\def\XXint#1#2#3{{\setbox0=\hbox{$#1{#2#3}{\int}$ }
\vcenter{\hbox{$#2#3$ }}\kern-.58\wd0}}

%


\theoremstyle{plain}
\newtheorem{theorem}{Theorem}

\newtheorem{corollary}[theorem]{Corollary}

\newtheorem{lemma}[theorem]{Lemma}

\theoremstyle{definition}

\newtheorem{definition}[theorem]{Definition}

\newtheorem*{thmi}{Theorem I}
\newtheorem*{thmii}{Theorem II}

\newtheorem*{lemi}{Lemma I}

\newtheorem*{cori}{Corollary I}
\newtheorem*{corii}{Corollary II}
\newtheorem*{coriii}{Corollary III}
\newtheorem*{coriv}{Corollary IV}

\def\TheoremI{Theorem \hyperref[t:thmi]{I} }
\def\TheoremII{Theorem \hyperref[t:thmii]{II} }

\def\LemmaI{Lemma \hyperref[l:lemi]{I} }

\def\CorollaryII{Corollary \hyperref[c:corii]{II} }
\def\CorollaryIII{Corollary \hyperref[c:coriii]{III} }

\numberwithin{equation}{section}
\numberwithin{theorem}{section}

\newcommand\eqn[1]{\eqref{e:#1}}

\newcommand\Lemma[1]{Lemma \ref{l:#1}}
\newcommand\Corollary[1]{Corollary \ref{c:#1}}



  \DeclareFontFamily{U}{mathb}{\hyphenchar\font45} 
\DeclareFontShape{U}{mathb}{m}{n}{
      <5> <6> <7> <8> <9> <10> gen * mathb
      <10.95> mathb10 <12> <14.4> <17.28> <20.74> <24.88> mathb12
      }{}
\DeclareSymbolFont{mathb}{U}{mathb}{m}{n}


\DeclareMathSymbol{\toitself}      {3}{mathb}{"FD}  

\begin{document}

\title{Tangent measures and absolute continuity of harmonic measure}

\author{Jonas Azzam}
\address{Departament de Matem\`atiques\\ Universitat Aut\`onoma de Barcelona \\ Edifici C Facultat de Ci\`encies\\
08193 Bellaterra (Barcelona) }
\email{jazzam "at" mat.uab.cat}
\author{Mihalis Mourgoglou}
\address{Departament de Matem\`atiques\\ Universitat Aut\`onoma de Barcelona and Centre de Reserca Matem\` atica\\ Edifici C Facultat de Ci\`encies\\
08193 Bellaterra (Barcelona) }
\email{mmourgoglou@crm.cat}
\keywords{Harmonic measure, Wolff snowflakes, non-tangentially accessible (NTA) domains, uniform domains, capacity density condition, tangent measures, absolute continuity}
\subjclass[2010]{31A15,28A75,28A78,28A33}
\thanks{The two authors were supported by the ERC grant 320501 of the European Research Council (FP7/2007-2013).}

\maketitle

\begin{abstract}
We show that for uniform domains $\Omega\subseteq \bR^{d+1}$ whose boundaries satisfy a certain nondegeneracy condition that harmonic measure cannot be mutually absolutely continuous with respect to $\alpha$-dimensional Hausdorff measure unless $\alpha\leq d$. We employ a lemma that shows that, at almost every non-degenerate point, we may find a tangent measure of harmonic measure whose support is the boundary of yet another uniform domain whose harmonic measure resembles the tangent measure. 
\end{abstract}

\tableofcontents

\section{Introduction}

In this paper we discuss when the harmonic measure $\omega_{\Omega}$ for a domain $\Omega\subseteq \bR^{d+1}$ can be mutually absolutely continuous with respect to some Hausdorff measure $\cH^{\alpha}$. This is a popular problem in the case $\alpha=d$. For a simply connected planar domain $\Omega\subseteq \bC$, $ \omega_{\Omega}\ll \cH^{1}|_{\d\Omega}\ll \omega_{\Omega}$ if and only if $\d\Omega$ is a rectifiable curve by the F. and M. Riesz theorem \cite{RR16} (also see \cite{Harmonic-Measure}). In higher dimensions, some extra geometric assumptions on the domain are necessary due to counterexamples by Wu and Ziemer \cite{Wu86,Z74}. Building on work of Dahlberg \cite{Dah77}, David and Jerison showed in \cite{DJ90} that harmonic measure is in fact $A_{\infty}$-equivalent to $\cH^{d}|_{\d\Omega}$ if $\Omega\subseteq \bR^{d+1}$ is a non-tangentially accessible domain with Ahlfors $d$-regular boundary.

\begin{definition}
We say $\Omega\subseteq \bR^{d+1}$ is a {\it $C$-uniform domain} if, for every $x,y\in \cnj{\Omega}$ there is a path $\gamma\subseteq\Omega$ connecting $x$ and $y$ such that
\begin{enumerate}
\item the length of $\gamma$ is at most $C|x-y|$ and
\item for $t\in \gamma$, $\dist(t,\d\Omega)\geq \dist(t,\{x,y\})/C$. 
\end{enumerate}
A curve satisfying the above conditions is called a {\it good curve} for $x$ and $y$ in $\Omega$. We say $\Omega$ satisfies the {\it $C$-interior corkscrew condition} if for all $\xi\in \d \Omega$ and $r\in(0, \diam\d\Omega)$ there is a ball $B(x,r/C)\subseteq \Omega\cap B(\xi,r)$. 

 If $B=B(\xi,r)$, we call $B'=B(x,r/C)$ the {\it corkscrew ball} of $B$ and denote its center by $x_{B}$. We say $\Omega$ satisfies the {\it $C$-exterior corkscrew condition} if there is a ball $B(y,r/C)\subseteq B(\xi,r)\backslash\Omega$ for all $\xi\in \d\Omega$ and $r\in(0,\diam \d\Omega)$. A domain $\Omega\subseteq \bR^{d+1}$ is {\it $C$-non-tangentially accessible} (or {\it $C$-NTA}) if it has the uniform, exterior and interior corkscrew properties with constants $C$.
\label{d:uniform}
\label{d:cork}
\end{definition}

Our definition of NTA domains is slightly different than that introduced by Jerison and Kenig in \cite{JK82}, but it is equivalent \cite{AHMNT}. The appeal of these domains aside from their nice geometry are the convenient scale invariant properties of harmonic measure like being doubling. However, many of these properties have been generalized to other domains, see for example \cite{Aik01}, \cite{AH08}, and \cite{MT15}.

One still has $\cH^{d}|_{\d\Omega}\ll\omega_{\Omega}$ if $\Omega\subseteq \bR^{d+1}$ is NTA and we just assume $\cH^{d}|_{\d\Omega}$ is locally finite instead of Ahlfors $d$-regular \cite{Bad12} (or even when $\Omega$ is just uniform with rectifiable boundary \cite{Mou15}), but we don't get mutual absolute continuity \cite{AMT15}. For the most part, all these results require either assuming or establishing some rectifiability properties of the boundary of $\Omega$. Recently it was shown that rectifiability is actually necessary to have $\omega_{\Omega}\ll \cH^{d}$ even on a subset of $\d\Omega$ \cite{AHMMMTV15}. See also \cite{ABHM15} and \cite{HMIV}.

The focus for us, however, will be on the relationship between harmonic measure and $\cH^{\alpha}$ for $\alpha\neq d$, Makarov showed that for simply connected planar domains we have $\omega_{\Omega}\perp \cH^{\alpha}|_{\d\Omega}$ for $\alpha>1$ and $\omega_{\Omega}\ll \cH^{\alpha}|_{\d\Omega}$ if $\alpha<1$ \cite{Mak85}. This is a uniquely planar property, though: for $d\geq 2$, there are NTA topological spheres in $\bR^{d+1}$ called {\it Wolff snowflakes} for which either $\dim \omega_{\Omega}<d$ or $\dim \omega_{\Omega}>d$. In particular, we can have domains where $\omega_{\Omega}\ll \cH^{\alpha}$ on a set of positive harmonic measure for some $\alpha>d$ and $\omega_{\Omega}\perp \cH^{\alpha}$ for some $\alpha<d$. The $\bR^{3}$ case is due to Wolff \cite{Wolff91} and the result for higher dimensions is due to Lewis, Verchota, and Vogel \cite{LVV05}. A corollary of our main results, however, will show that, for NTA domains, mutual absolute continuity can only occur if $\alpha\leq d$. 

\begin{cori}
Let $\Omega\subseteq \bR^{d+1}$ be an NTA domain and $E\subseteq \bR^{d+1}$ such that $\omega_{\Omega}(E)>0$ and $\omega_{\Omega}|_{E}\ll \cH^{\alpha}|_{E}\ll \omega_{\Omega}|_{E}$. Then $\alpha\leq d$. 
\end{cori}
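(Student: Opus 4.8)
The plan is to argue by contradiction, assuming $\alpha>d$: blowing up $\omega_{\Omega}$ at a $\omega_{\Omega}$-generic point of $E$, I will produce an unbounded uniform domain whose harmonic measure is simultaneously forced to be at most $\alpha$-dimensional and at least $d$-dimensional, which is impossible.

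\emph{Reduction and density estimates.} Since $\Omega$ is $C$-NTA it satisfies the $C$-exterior corkscrew condition, and an exterior corkscrew ball $B(y,r/C)\subseteq B(\xi,r)\setminus\Omega$ has Newtonian capacity $\gtrsim r^{d-1}\approx\Cap(B(\xi,r))$; hence $\Omega$ satisfies the capacity density condition uniformly, so \emph{every} point of $\d\Omega$ is non-degenerate in the sense needed for the tangent-measure lemma. By Radon--Nikodym, $\omega_{\Omega}|_E=h\,\cH^{\alpha}|_E$ with $h>0$ $\cH^{\alpha}$-a.e.\ on $E$, and passing to $\{h>1/n\}\cap E$ for $n$ large (still of positive $\omega_{\Omega}$-measure) we may assume $c\,\cH^{\alpha}|_E\le\omega_{\Omega}|_E\le C\,\cH^{\alpha}|_E$ and in particular $\cH^{\alpha}(E)<\infty$. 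By Besicovitch's density theorems, $2^{-\alpha}\le\limsup_{r\to0}r^{-\alpha}\cH^{\alpha}(E\cap B(x,r))\le1$ for $\cH^{\alpha}$-a.e.\ $x\in E$, and $\omega_{\Omega}$-a.e.\ $x\in E$ is a point of $\omega_{\Omega}$-density of $E$; combined with the two-sided comparison these give $0<\limsup_{r\to0}r^{-\alpha}\omega_{\Omega}(B(x,r))<\infty$ for $\omega_{\Omega}$-a.e.\ $x\in E$. Fix such an $x$: it lies on $\d\Omega$, is non-degenerate, and there is a sequence $r_k\downarrow0$ with $\omega_{\Omega}(B(x,r_k))\approx r_k^{\alpha}$.

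\emph{The blow-up.} With $T_{x,r}(z)=(z-x)/r$, the rescalings $r_k^{-\alpha}(T_{x,r_k})_{\#}\omega_{\Omega}$ have mass $\lesssim t^{\alpha}$ on each $B(0,t)$, so a subsequence converges weak-$*$ to a nonzero $\nu\in\Tan(\omega_{\Omega},x)$; the finiteness of the upper $\alpha$-density of $\omega_{\Omega}$ at $x$ gives $\nu(B(0,t))\lesssim t^{\alpha}$ for all $t>0$, and (at the generic point $x$) the fact that tangent measures of $\nu$ are renormalized tangent measures of $\omega_{\Omega}$ at $x$ upgrades this, together with a covering argument, to $\nu(B(y,t))\lesssim t^{\alpha}$ for all $y\in\supp\nu$ and $t>0$. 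Because $x$ is non-degenerate, the lemma lets us pass to a further subsequence so that additionally $\supp\nu=\d\Omega'$ for an unbounded uniform domain $\Omega'$ with $0\in\d\Omega'$ and a pole $p\in\Omega'$, with $\nu$ comparable to $\omega_{\Omega'}^{p}$ on compact subsets of $\bR^{d+1}$; since the capacity density condition is scale invariant and stable under blow-up limits, $\Omega'$ is again a non-degenerate uniform domain. A parallel density argument (using the lower density estimate for $\cH^{\alpha}|_E$ and a compatible choice of the scales $r_k$) shows moreover that $\nu$ is Ahlfors $\alpha$-regular, so $\d\Omega'$ is an Ahlfors $\alpha$-regular set and $\omega_{\Omega'}^{p}\approx\cH^{\alpha}|_{\d\Omega'}$; in particular $\omega_{\Omega'}^{p}(B(\xi,t))\approx t^{\alpha}$ for $\xi\in\d\Omega'$ and $t\lesssim\dist(p,\xi)$, with $\alpha>d$.

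\emph{The contradiction, and the main obstacle.} It remains to rule out that a non-degenerate unbounded uniform domain has $\alpha$-Ahlfors regular harmonic measure with $\alpha>d$. On one hand, Ahlfors $\alpha$-regularity with $\alpha>d$ makes $\d\Omega'$ $d$-content thick: $\cH^{d}_{\infty}(\d\Omega'\cap B(\xi,t))\gtrsim t^{d}$. On the other hand, I claim a uniform domain satisfying the capacity density condition has \emph{enough interior room} for harmonic measure to be at least $d$-dimensional from below near $\omega_{\Omega'}^{p}$-a.e.\ boundary point, namely $\omega_{\Omega'}^{p}(B(\xi,t))\gtrsim c(p)\,t^{d}$ for $t$ small; combining the interior corkscrew balls with Bourgain's estimate for $\omega_{\Omega'}$ and the Caffarelli--Fabes--Mortola--Salsa comparison of harmonic measure with the Green function is the natural route to this. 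Given it, $t^{d}\lesssim\omega_{\Omega'}^{p}(B(\xi,t))\approx t^{\alpha}$ as $t\to0$ forces $\alpha\le d$, a contradiction (equivalently, a fixed ball then meets $\gtrsim t^{-\alpha}$ boundary balls of radius $t$, each carrying $\gtrsim c(p)t^{d}$ of $\omega_{\Omega'}^{p}$, and $t^{-\alpha}\cdot t^{d}\to\infty$ overruns the total mass). I expect this lower bound $\omega_{\Omega'}^{p}(B(\xi,t))\gtrsim c(p)t^{d}$ to be the main obstacle: a crude Harnack-chain argument only yields $\omega_{\Omega'}^{p}(B(\xi,t))\gtrsim t^{N}$ with an uncontrolled exponent $N$, so the sharp exponent $d$ must be extracted by exploiting the corkscrew geometry, the $d$-thickness of $\d\Omega'$, and the maximum principle more carefully — for instance by a second blow-up of $\Omega'$ at a generic boundary point to a dilation-invariant (conical) model, where the homogeneity of harmonic measure pins the exponent down.
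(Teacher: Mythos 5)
Your reduction and blow-up follow the paper's route (Corollary I is deduced from \TheoremI via the CDC/non-degeneracy of NTA domains and the density computation \eqn{twth}, and the blow-up is exactly \LemmaI together with \Lemma{upperlim}), but your final step has a genuine gap, and you have correctly identified where it is. The lower bound $\omega_{\Omega'}^{p}(B(\xi,t))\gtrsim c(p)\,t^{d}$ that your contradiction hinges on is not a general property of uniform domains satisfying the capacity density condition: the Wolff snowflakes are NTA (hence uniform and CDC) domains with $\dim\omega_{\Omega}>d$ almost everywhere, i.e.\ $\omega_{\Omega}(B(\xi,t))\leq t^{d+\ve}$ for small $t$ at a.e.\ boundary point, which flatly contradicts your claimed estimate. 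So the estimate cannot come from the corkscrew geometry, Bourgain's lemma, and CFMS alone; the CFMS route reduces it to $G_{\Omega'}(p,x_{B(\xi,t)})\gtrsim t$, i.e.\ a nonvanishing lower bound on the ``normal derivative'' of the Green function, which for the blow-up domain is essentially equivalent to the statement $\alpha\leq d$ you are trying to prove. Neither of your suggested repairs (a second conical blow-up, or a more careful maximum-principle argument) is carried out, so the proof is incomplete at its decisive step.

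The paper closes this gap by running the Green-function argument in the opposite direction and using only the \emph{upper} bound $\mu(B')\lesssim r_{B'}^{\alpha}$: one takes an interior ball $\tilde{B}\subseteq\tilde{\Omega}$ touching $\d\tilde{\Omega}$ at a point $\zeta$, uses $G_{\tilde{B}}\leq G_{\tilde{\Omega}}$, \Lemma{w<G}, and \eqn{wsimr^a} to show that $\alpha>d$ would force the normal derivative of $G_{\tilde{B}}(\tilde{x},\cdot)$ at $\zeta$ to vanish, contradicting the explicit formula for the Green function of a ball. Note also a secondary gap in your argument: you cannot upgrade $\nu$ to two-sided Ahlfors $\alpha$-regularity, because mutual absolute continuity with $\cH^{\alpha}(E)<\infty$ controls only the upper density $\theta^{\alpha,*}$; the lower density $\theta^{\alpha}_{*}(\cH^{\alpha}|_{E},\cdot)$ (hence of $\omega_{\Omega}$) may vanish a.e.\ (as in Volberg's example cited in the paper), and the standard density theorems for $\cH^{\alpha}$ bound only the $\limsup$. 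The $d$-content thickness of $\d\Omega'$ that you want can instead be obtained directly from the corkscrew conditions and part (5) of \LemmaI, but that does not rescue the main step.
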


Our main result holds in more general circumstances. Firstly, the domain need not be NTA but just uniform, and the points in $E$ need to satisfy a nondegeneracy condition.

\begin{definition}
For $\Omega\subseteq \bR^{d+1}$ connected and $\beta,\delta\in (0,1)$, we say $\xi\in \d\Omega$ is {\it $(\beta,\delta)$-non-degenerate} if
\begin{equation}
\eta_{\delta}(\xi):=\limsup_{r\rightarrow 0}\eta_{\delta}(\xi,r)\leq \beta
\label{e:limsup}
\end{equation}
where 
\[
\eta_{\delta}(\xi,r):= \sup_{x\in \Omega \atop |x-\xi|= \delta r}\omega_{B(\xi,r)\cap \Omega}^{x}(\d B(\xi,r)\cap \Omega).\]
We will say $\xi$ is non-degenerate if it is $(\beta,\delta)$-non-degenerate for some $\beta,\delta>0$. 
\end{definition}

This may seem slightly messy, but it is satisfied at each point in $\d\Omega$ when $\Omega$ satisfies the capacity density condition, which we will define later. In particular, this includes NTA domains. 

Next, to establish the bound $\alpha\leq d$, we don't need mutual absolute continuity but just some control on the upper densities of harmonic measure. Recall that we define the {\it upper and lower $\alpha$-densities} for a measure $\mu$ as
\[ \theta^{\alpha,*}(\mu,\xi)=\limsup_{r\rightarrow 0} \frac{\mu(B(\xi,r))}{r^{s}}, \;\; \theta_{*}^{\alpha}(\mu,\xi)=\liminf_{r\rightarrow 0} \frac{\mu(B(\xi,r))}{r^{s}}.\]

We can now state the main result. 

\begin{thmi}
\label{t:thmi}
Let $d\geq 1$, and $\Omega\subseteq \bR^{d+1}$ be a uniform domain. Suppose there is $\alpha>0$, $x_{0}\in \Omega$, and a set $E\subseteq \d\Omega$ with $\omega_{\Omega}^{x_{0}}(E)>0$ such that each $\xi\in E$ is non-degenerate and
\begin{equation}
 0< \theta^{\alpha,*}(\omega_{\Omega}^{x_{0}},\xi)<\infty \;\; \mbox{ for }\;\; \xi\in E. 
 \label{e:densities}
 \end{equation}
Then $\alpha\leq d$.
\end{thmi}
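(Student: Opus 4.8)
The plan is to blow up harmonic measure at a generic point of $E$, use the main lemma to replace $\Omega$ by a uniform domain whose harmonic measure carries an $\alpha$-density bound at \emph{every} scale and \emph{every} boundary point, and then rule out $\alpha>d$ by playing this bound against the nondegeneracy.

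First I would set up the tangent-measure reduction. Since $0<\theta^{\alpha,*}(\omega_\Omega^{x_0},\cdot)<\infty$ on $E$ and $\omega_\Omega^{x_0}(E)>0$, the standard density comparison theorems (if $\theta^{\alpha,*}(\mu,\cdot)\le\lambda$ on $A$ then $\mu|_A\lesssim_\lambda\cH^\alpha|_A$, and the reverse with $\theta^{\alpha,*}(\mu,\cdot)\ge\lambda'>0$) together with inner regularity let us discard a null set and pass to a compact subset of $E$ on which $\theta^{\alpha,*}$ lies between two fixed positive constants; in particular $0<\cH^\alpha(E)<\infty$ and $\omega:=\omega_\Omega^{x_0}|_E$ is comparable to $\cH^\alpha|_E$. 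For $\omega$-a.e.\ $\xi\in E$ we simultaneously have: $E$ has $\omega$-density one at $\xi$; the density bound and nondegeneracy hold at $\xi$; and the hypotheses of the main lemma are satisfied. Fix such a $\xi$. Then $\mathrm{Tan}(\omega_\Omega^{x_0},\xi)=\mathrm{Tan}(\omega,\xi)$, every $\nu$ in it is nonzero, and — because the upper $\alpha$-density is finite at $\xi$ and $E$ has full density there — $\nu(B(z,r))\le Cr^\alpha$ for all $z\in\supp\nu$ and all $r>0$, with $C$ depending only on $\alpha$ and the density bound. By the main lemma we may moreover choose $\nu$ with $\supp\nu=\partial\Omega'$ for a uniform domain $\Omega'$ with $0\in\partial\Omega'$, and with $\nu$ comparable on compact sets to $\omega_{\Omega'}^{p}$ for a suitable pole $p\in\Omega'$.

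This reduces the theorem to: if $\Omega'\subseteq\bR^{d+1}$ is uniform, $\partial\Omega'$ is nondegenerate (which I would check is inherited from the nondegeneracy at $\xi$, that condition being essentially scale invariant and the blow-up being assembled from rescalings of $\Omega$), $\omega_{\Omega'}^p$ is nontrivial, and $\omega_{\Omega'}^p(B(z,r))\lesssim r^\alpha$ for all $z\in\partial\Omega'$ and all small $r$, then $\alpha\le d$. I would argue by contradiction, assuming $\alpha>d$. From the nondegeneracy, via the maximum principle (comparing $\omega_{\Omega'\cap B(z,r)}$ with $\omega_{\Omega'}$) together with the classical bound of the harmonic measure of a compact set seen from a corkscrew point by its Newtonian capacity, one gets that $\partial\Omega'$ is quantitatively non-thin: $\Cap\bigl(\partial\Omega'\cap\overline{B(z,r)}\bigr)\gtrsim r^{d-1}$ at every small scale near every nondegenerate boundary point, i.e.\ $\partial\Omega'$ behaves like a codimension-one set. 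On the other hand, $\omega_{\Omega'}^p(B(z,r))\lesssim r^\alpha$ with $\alpha>d$ says harmonic measure is spread strictly more than $d$-dimensionally; feeding this into a Carleson-type comparison of harmonic measure of balls with values of the Green function at corkscrew points (valid here thanks to the uniformity and nondegeneracy), $\omega_{\Omega'}^p(B(z,r))\approx r^{d-1}G_{\Omega'}(p,A_r(z))$, one finds $G_{\Omega'}(p,A_r(z))\lesssim r^{\alpha-d+1}$, so the Green function vanishes strictly faster than linearly at $\partial\Omega'$. The contradiction is then the incompatibility of a super-$d$-dimensional harmonic measure with a codimension-one boundary. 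The cleanest route I see is to blow $\Omega'$ (and the rescaled, renormalized Green function) up once more at a generic boundary point, landing on a conical domain $\Omega''$ carrying a positive harmonic function homogeneous of degree $\alpha-d+1$ and vanishing on $\partial\Omega''$; the cross-section $U=\Omega''\cap\bS^d$ is then itself a nondegenerate uniform domain whose boundary, after removing the radial direction, carries harmonic measure controlled by $r^{\alpha-1}$, and an induction on the ambient dimension — with the planar case $d=1$, where blow-ups are sectors and $\partial\Omega'$ is $1$-dimensional away from the vertex, as the base case — yields $\alpha-1\le d-1$.

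I expect the last step to be the real obstacle: converting the qualitative clash between "harmonic measure more than $d$-dimensional" and "boundary of codimension one" into an honest quantitative contradiction. Everything hinges on the nondegeneracy — it is exactly what forbids pushing the density exponent above $d$ on a positive-measure set (a single point such as a cone vertex can certainly have $\theta^{\alpha,*}\in(0,\infty)$ with $\alpha>d$, but it is $\omega$-null, which is why the hypotheses $\omega_\Omega^{x_0}(E)>0$ and "the density condition holds at every point of $E$" are indispensable). One must also take care that nondegeneracy, the upper $\alpha$-density bound, and uniformity all survive each blow-up in the form needed to iterate.
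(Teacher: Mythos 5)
Your reduction to the blow-up picture is essentially the paper's: you pass to a generic point of $E$ where the upper density and nondegeneracy hypotheses survive, invoke the main lemma to get a uniform domain $\tilde{\Omega}$ with $\d\tilde{\Omega}=\supp\mu$ and $\omega_{\tilde{\Omega}}^{x_B}(B')/\omega_{\tilde{\Omega}}^{x_B}(B)\sim\mu(B')/\mu(B)$, and use the growth bound $\mu(B(z,r))\lesssim r^{\alpha}$ together with the Green function/harmonic measure comparison to conclude that $G_{\tilde{\Omega}}$ vanishes at boundary points at the rate $|x-\zeta|^{\alpha-d+1}$, i.e.\ superlinearly when $\alpha>d$. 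Up to that point the argument is sound.

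The gap is precisely where you say it is: you never convert the superlinear decay of the Green function into a contradiction, and the route you sketch (a second blow-up producing a conical domain with a homogeneous positive harmonic function of degree $\alpha-d+1$, followed by an induction on the ambient dimension) does not work as stated. To know that a blow-up of the Green function is \emph{homogeneous} you would need an Almgren-- or Weiss--type monotonicity formula, which is not available for general uniform domains; without homogeneity the "conical domain with cross-section $U\subseteq\bS^{d}$" never materializes, and the inductive step (that $U$ is again a nondegenerate uniform domain whose harmonic measure obeys an $(\alpha-1)$-growth bound) is unsubstantiated. None of this machinery is needed. The missing observation is an elementary barrier: pick any ball $\tilde{B}$ compactly contained in $\tilde{\Omega}$ whose closure touches $\d\tilde{\Omega}$ at some point $\zeta$ (such a ball exists because $\tilde{\Omega}$ is open with nonempty boundary). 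By the maximum principle $G_{\tilde{B}}(\tilde{x},\cdot)\leq G_{\tilde{\Omega}}(\tilde{x},\cdot)$ in $\tilde{B}$, where $\tilde{x}$ is the center of $\tilde{B}$. The left-hand side is explicit ($|\tilde{x}-\cdot|^{1-d}-r_{\tilde{B}}^{1-d}$ for $d\geq 2$, a logarithm for $d=1$) and has a \emph{nonzero} inward normal derivative at $\zeta$, so it decays exactly linearly along the radius $[\zeta,\tilde{x}]$; but your bound forces $G_{\tilde{\Omega}}(\tilde{x},x)/|x-\zeta|\lesssim|x-\zeta|^{\alpha-d}\to 0$, a contradiction. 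This single step closes the proof and makes the capacity estimates, the second blow-up, and the dimension induction in your last paragraph unnecessary.
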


Observe that if $E\subseteq \d\Omega$ is a set with $\cH^{\alpha}(E)<\infty$ and $\omega_{\Omega}^{x_{0}}\ll \cH^{\alpha}$ on $E$, then $\theta^{\alpha,*}(\omega_{\Omega}^{x_{0}},\xi)<\infty$ for $\omega_{\Omega}^{x_{0}}$-almost every $\xi\in E$, and so having finite densities is a weaker condition in this scenario. Indeed, for $\xi\in E$,
\begin{align*}
\theta^{\alpha,*}&(\omega_{\Omega}^{x_{0}},\xi)
 =\limsup_{r\rightarrow 0}\frac{\omega_{\Omega}^{x_{0}}(B(\xi,r))}{r^{\alpha}}\\
& = \limsup_{r\rightarrow 0} \frac{\omega_{\Omega}^{x_{0}}(B(\xi,r))}{\omega_{\Omega}^{x_{0}}(E\cap B(\xi,r))}\frac{\omega_{\Omega}^{x_{0}}(E\cap B(\xi,r))}{\cH^{\alpha}(E\cap B(\xi,r))}\frac{\cH^{\alpha}(E\cap B(\xi,r))}{r^{\alpha}}\\
\end{align*}

The first quotient converges to $1$ for $\omega_{\Omega}$-almost every $\xi\in E$ by \cite[Corollary 2.14]{Mattila}. The second quotient converges to a finite number for $\cH^{\alpha}$-almost every (and hence $\omega_{\Omega}$-almost every) $\xi\in E$ by \cite[Theorem 2.12]{Mattila}. Finally, by \cite[Theorem 6.2]{Mattila}, $\theta^{\alpha,*}(\cH^{d}|_{\d\Omega},\cdot)\in (0,\infty)$ and hence the limit of the third quotient has finite supremal limit for $\cH^{\alpha}$-almost every (and hence $\omega_{\Omega}$-almost every) $\xi\in E$. Thus, the above equalities give
\begin{equation}\label{e:twth}
\theta^{\alpha,*}(\omega_{\Omega}^{x_{0}},\cdot )\leq \frac{d \omega_{\Omega}|_{E}}{d\cH^{\alpha}|_{E}}\cdot  \theta^{\alpha,*}(\cH^{d}|_{\d\Omega},\cdot )
\;\;\; \omega_{\Omega}^{x_{0}}\mbox{-a.e. in $E$ if }\cH^{\alpha}(E)<\infty.
\end{equation}

In particular, this gives the following corollaries. 

\begin{corii}\label{c:corii}
Let $\Omega\subseteq \bR^{d+1}$ be a uniform domain, $\alpha>d$, and $E\subseteq \d\Omega$ be a set of non-degenerate points of finite $\cH^{\alpha}$-measure such that $\omega_{\Omega}^{x_{0}}|_{E}\ll \cH^{\alpha}|_{E}$. Then $\theta^{\alpha,*}(\omega_{\Omega}^{x_{0}},\xi)=0$ for $\omega_{\Omega}^{x_{0}}$-almost every $\xi\in E$.
\end{corii}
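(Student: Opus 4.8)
The plan is to derive this directly from Theorem I together with the density estimate \eqref{e:twth}. First, suppose toward a contradiction that $\theta^{\alpha,*}(\omega_\Omega^{x_0},\xi)>0$ on a subset $E'\subseteq E$ with $\omega_\Omega^{x_0}(E')>0$. Since $E$ consists of non-degenerate points and has $\cH^\alpha(E)<\infty$, the chain of equalities leading to \eqref{e:twth} applies: for $\omega_\Omega^{x_0}$-almost every $\xi\in E$ we have
\[
\theta^{\alpha,*}(\omega_\Omega^{x_0},\xi)\le \frac{d\omega_\Omega|_E}{d\cH^\alpha|_E}(\xi)\cdot\theta^{\alpha,*}(\cH^d|_{\d\Omega},\xi).
\]
The Radon--Nikodym derivative $\frac{d\omega_\Omega|_E}{d\cH^\alpha|_E}$ is finite $\cH^\alpha$-a.e. (hence $\omega_\Omega^{x_0}$-a.e., since $\omega_\Omega^{x_0}|_E\ll\cH^\alpha|_E$), and $\theta^{\alpha,*}(\cH^d|_{\d\Omega},\xi)<\infty$ as cited via \cite[Theorem 6.2]{Mattila}; so the right-hand side is finite $\omega_\Omega^{x_0}$-a.e.\ on $E$. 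Combined with the assumption $\theta^{\alpha,*}(\omega_\Omega^{x_0},\xi)>0$ on $E'$, we get a set of positive harmonic measure on which $0<\theta^{\alpha,*}(\omega_\Omega^{x_0},\xi)<\infty$.

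Now this set $E'$ satisfies all the hypotheses of Theorem I: it lies in $\d\Omega$, has positive harmonic measure from $x_0$, consists of non-degenerate points (being a subset of $E$), and satisfies \eqref{e:densities}. Theorem I then forces $\alpha\le d$, contradicting the standing assumption $\alpha>d$. Hence no such $E'$ exists, i.e.\ $\theta^{\alpha,*}(\omega_\Omega^{x_0},\xi)=0$ for $\omega_\Omega^{x_0}$-almost every $\xi\in E$.

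There is essentially no obstacle here beyond bookkeeping: the substantive content is entirely in Theorem I and in the measure-theoretic facts already invoked to establish \eqref{e:twth}. The one point requiring a little care is the measurability and a.e.-finiteness of the upper density $\theta^{\alpha,*}(\omega_\Omega^{x_0},\cdot)$ and of the Radon--Nikodym derivative, together with the justification that $\omega_\Omega^{x_0}$-a.e.\ statements follow from $\cH^\alpha$-a.e.\ statements on $E$ under the absolute continuity hypothesis; these are standard (see \cite{Mattila}) and already implicitly used in the derivation of \eqref{e:twth} preceding the corollary.
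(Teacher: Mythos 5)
Your proposal is correct and follows exactly the route the paper intends: the computation leading to \eqref{e:twth} gives $\theta^{\alpha,*}(\omega_{\Omega}^{x_{0}},\cdot)<\infty$ $\omega_{\Omega}^{x_{0}}$-a.e.\ on $E$, and if the upper density were positive on a subset of positive harmonic measure, \TheoremI (applied to that subset, whose points are still non-degenerate) would force $\alpha\leq d$, a contradiction. No gaps; the minor remark that non-degeneracy is not actually needed for \eqref{e:twth} itself (only for invoking \TheoremI) is harmless.
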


\begin{coriii}\label{c:coriii}
 Let $\Omega\subseteq \bR^{d+1}$ be a uniform domain, $E\subseteq \d\Omega$ a set of non-degenerate points, and $\alpha>d$. It is impossible for $\omega_{\Omega}^{x_{0}}|_{E} \ll \cH^{\alpha}|_{E} \ll \omega_{\Omega}^{x_{0}}|_{E}$ unless $\omega_{\Omega}^{x_{0}}(E)=0$.%
 \end{coriii}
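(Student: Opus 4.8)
The plan is to deduce the statement from \TheoremI (equivalently, from \CorollaryII) together with the classical density comparison theorems for Hausdorff measures. Write $\omega=\omega_{\Omega}^{x_{0}}$ and assume $\omega|_{E}\ll \cH^{\alpha}|_{E}\ll \omega|_{E}$; we must show $\omega(E)=0$. Using inner regularity of $\omega$ we may replace $E$ by a Borel subset carrying the same harmonic measure (the two absolute-continuity relations, and non-degeneracy, are inherited by Borel subsets), so we may assume $E$ is Borel. Then $\theta^{\alpha,*}(\omega,\cdot)$ is a Borel function on $E$, and we split $E$ into the three Borel pieces $N_{0}$, $G$, $N_{\infty}$ on which $\theta^{\alpha,*}(\omega,\cdot)$ equals $0$, lies in $(0,\infty)$, and equals $\infty$, respectively. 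It suffices to show $\omega(N_{0})=\omega(G)=\omega(N_{\infty})=0$.

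For $N_{\infty}$: since $\omega$ is a finite Radon measure, the comparison theorem (\cite[Theorem 6.9]{Mattila}) gives $\cH^{\alpha}(\{\xi:\theta^{\alpha,*}(\omega,\xi)\geq\lambda\})\lesssim \lambda^{-1}\omega(\bR^{d+1})$ for every $\lambda>0$, so $\cH^{\alpha}(N_{\infty})=0$, and then $\omega(N_{\infty})=0$ because $\omega|_{E}\ll\cH^{\alpha}|_{E}$. For $N_{0}$: the same theorem gives $\omega(F)\leq 2^{\alpha}\lambda\,\cH^{\alpha}(F)$ for every Borel $F\subseteq N_{0}$ and every $\lambda>0$, hence $\omega(F)=0$ as soon as $\cH^{\alpha}(F)<\infty$. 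If $\cH^{\alpha}(N_{0})=\infty$, the classical subset theorem for Hausdorff measures (see \cite{Mattila}) produces a compact $K\subseteq N_{0}$ with $0<\cH^{\alpha}(K)<\infty$; then $\omega(K)=0$ by the previous sentence, whence $\cH^{\alpha}(K)=0$ by $\cH^{\alpha}|_{E}\ll\omega|_{E}$, a contradiction. Therefore $\cH^{\alpha}(N_{0})<\infty$ and so $\omega(N_{0})=0$.

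Finally, suppose $\omega(G)>0$. Every point of $G\subseteq E$ is non-degenerate and $0<\theta^{\alpha,*}(\omega,\xi)<\infty$ for $\xi\in G$, so $G$ satisfies all the hypotheses of \TheoremI (with $E$ there replaced by $G$), and we conclude $\alpha\leq d$, contradicting the standing assumption $\alpha>d$. (Equivalently: $G$ is $\sigma$-finite for $\cH^{\alpha}$, being the countable union of the sets $\{\xi\in E:\theta^{\alpha,*}(\omega,\xi)>1/k\}$, each of finite $\cH^{\alpha}$-measure by \cite[Theorem 6.9]{Mattila}; hence some piece $E_{k}$ has $\omega(E_{k})>0$ and $\cH^{\alpha}(E_{k})<\infty$, and \CorollaryII applied to $E_{k}$ gives $\theta^{\alpha,*}(\omega,\cdot)=0$ $\omega$-a.e.\ on $E_{k}$, contradicting $\theta^{\alpha,*}(\omega,\cdot)>1/k$ on $E_{k}$.) Thus $\omega(G)=0$, and combining the three cases yields $\omega(E)=0$, as desired.

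The only genuinely delicate point is the treatment of $N_{0}$: because we do \emph{not} assume $\cH^{\alpha}(E)<\infty$, the comparison inequalities cannot be invoked wholesale, and one must rule out that $\cH^{\alpha}|_{E}$ is wildly non-$\sigma$-finite --- this is exactly where both inclusions in the mutual absolute continuity, together with the existence of compact subsets of finite positive $\cH^{\alpha}$-measure, come in. Once the problem is localized to a set of finite (or, on $G$, $\sigma$-finite) $\cH^{\alpha}$-measure, everything reduces to a routine application of \TheoremI (or \CorollaryII) and the density machinery of \cite{Mattila}.
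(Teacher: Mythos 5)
Your proof is correct, and it follows the same basic route as the paper: reduce to \CorollaryII (equivalently \TheoremI) on the part of $E$ where $\theta^{\alpha,*}(\omega,\cdot)$ is positive and finite, and dispose of the remaining points with the density comparison theorems of \cite[Theorem 6.9]{Mattila}. The paper compresses all of this into the single remark that ``$\cH^{\alpha}|_{E}\ll\omega_{\Omega}|_{E}$ implies $\cH^{\alpha}(E)<\infty$,'' which, read literally, is not a valid implication for an arbitrary finite measure (mutual absolute continuity with a measure of infinite, even non-$\sigma$-finite, $\cH^{\alpha}$-mass is not a priori excluded); your decomposition into $N_{0}$, $G$, $N_{\infty}$, and in particular the use of the Besicovitch--Davies subset theorem to rule out $\cH^{\alpha}(N_{0})=\infty$, supplies exactly the missing justification. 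So the proposal is not a different method, but a more careful and complete execution of the paper's intended argument; the only points worth keeping an eye on are the measurability reductions (replacing $E$ by a Borel subset so that the subset theorem and the Borel measurability of $\theta^{\alpha,*}(\omega,\cdot)$ apply), which you handle adequately.
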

 
 \CorollaryIII implies the conditions of \CorollaryII since $\cH^{\alpha}|_{E}\ll \omega_{\Omega}|_{E}$ implies $\cH^{\alpha}(E)<\infty$. \\

Mutual absolute continuity can in fact occur for $\alpha<d$. At the time of writing this manuscript, Alexander Volberg informed us that he constructed a uniform domain $\Omega\subseteq \bR^{d+1}$ satisfying the capacity density condition (so every point is non-degenerate by \Lemma{ancona} below) such that $\omega_{\Omega}\ll \cH^{\alpha}|_{\d\Omega}\ll \omega_{\Omega}$ for some $\alpha<d$. The construction is a modification of another example given by Bishop and Jones in the plane of a rectifiable set $E\subseteq \bR^{2}$ that contains a set of positive harmonic measure but zero Hausdorff $1$-measure \cite{BJ}.  \\

We can also bound $\alpha$ from below in certain circumstances.

\begin{thmii}
\label{t:thmii}
Let $\Omega\subseteq \bR^{d+1}$ be a uniform domain and let $E\subseteq \d\Omega$ have positive harmonic measure such that
\begin{equation}
 0< \theta^{\alpha}_{*}(\omega_{\Omega}^{x_{0}},\xi) <\infty \;\;\; \mbox{ for all }\;\;\; \xi\in E.
 \label{e:densities2}
 \end{equation}
Then $\alpha>d$. If for some $s>d-1$ we have for each $\xi\in E$
\begin{equation}
\liminf_{r\rightarrow 0} \frac{\cH^{s}_{\infty}(B(\xi,r)\cap \d\Omega)}{r^{s}}>0 \label{e:content}
\end{equation}
then $\alpha\geq s$.
\end{thmii}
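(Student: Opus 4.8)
The plan is to run a tangent-measure argument dual to the one used for Theorem I. By the density hypothesis \eqref{e:densities2}, at $\omega_{\Omega}^{x_0}$-a.e.\ $\xi\in E$ the lower density $\theta^{\alpha}_{*}(\omega_{\Omega}^{x_0},\xi)$ is positive and finite; combining this with the upper density being finite (which follows since the liminf is finite only if we also know finiteness of the limsup — here I would instead invoke that positive finite lower density forces $\xi$ to be a point of approximate density, and in particular a point where $\omega_{\Omega}^{x_0}$ has nontrivial tangent measures that are $\alpha$-homogeneous-like). More precisely, at a.e.\ such $\xi$ one can produce, via the blow-up procedure in Mattila's book and the uniform-domain structure theorem alluded to in the abstract, a tangent measure $\mu=\operatorname{Tan}(\omega_{\Omega}^{x_0},\xi)$ which is the harmonic measure of a uniform domain $\Omega_\infty$ with pole at some interior point, and which satisfies the scaling bound $\mu(B(0,r))\approx r^{\alpha}$ for all $r>0$ (the two-sided bound coming from the positivity and finiteness of the lower density together with the doubling-type estimates for harmonic measure of uniform domains). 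Since $\Omega_\infty$ is a uniform domain and $\mu$ is its harmonic measure, the support $\partial\Omega_\infty$ is unbounded and connected, hence has Hausdorff dimension at least $1$; but more importantly, harmonic measure in any domain in $\bR^{d+1}$ has the property that it cannot give positive mass to sets of dimension strictly less than $d-1$ behaving like... — this is the place where I expect to pull in a Wiener-type / capacitary lower bound.

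For the first assertion ($\alpha>d$), the key mechanism is a lower bound on how fast harmonic measure can decay on an unbounded uniform domain: if $\Omega_\infty$ is a uniform domain with harmonic measure $\mu$ normalized so that $\mu(B(0,1))\approx 1$, then because $\partial\Omega_\infty$ separates space and satisfies interior corkscrews, the complement $\bR^{d+1}\setminus\Omega_\infty$ has enough capacity at every scale that the harmonic measure satisfies $\mu(B(0,R))\gtrsim R^{d}$ as $R\to\infty$ — equivalently, a change-of-pole/Bourgain-type estimate forces $\mu(B(0,R))\gtrsim R^{d} \cdot \mu(B(0,1))$ up to constants, since a uniform domain is quantitatively "$d$-dimensional" from the inside. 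Comparing this with $\mu(B(0,R))\approx R^{\alpha}$ and letting $R\to\infty$ forces $\alpha\ge d$; the strict inequality $\alpha>d$ then comes from the fact that equality $\alpha=d$ would make $\mu$ a nontrivial $d$-homogeneous (hence essentially $d$-Ahlfors-regular) harmonic measure on a uniform domain, which would be a rectifiability situation contradicting, for instance, the finiteness of the lower density together with the geometry, or can be excluded by a separate argument showing $\theta^{d}_{*}(\omega_{\Omega}^{x_0},\cdot)$ cannot be simultaneously positive and finite on a set of positive harmonic measure unless the boundary is rectifiable, in which case one is in the degenerate borderline. (I would present this borderline exclusion as the content of a separate lemma.)

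For the refined assertion, suppose \eqref{e:content} holds with $s>d-1$: $\liminf_{r\to0}\cH^{s}_{\infty}(B(\xi,r)\cap\partial\Omega)/r^{s}>0$ at each $\xi\in E$. This Hausdorff-content lower bound is scale-invariant and blow-up stable, so the tangent domain $\Omega_\infty$ inherits $\cH^{s}_{\infty}(B(0,R)\cap\partial\Omega_\infty)\gtrsim R^{s}$ for all $R>0$. Now invoke a Wolff/Lewis-type lower bound for harmonic measure on domains whose boundaries carry large $s$-content with $s>d-1$: such boundaries have positive capacity at every scale in a quantitative way (the relevant capacity in $\bR^{d+1}$ sees sets of content-dimension exceeding $d-1$), so Bourgain's estimate upgrades to $\mu(B(0,R))\gtrsim R^{s}\cdot\mu(B(0,1))$ as $R\to\infty$. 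Combined with $\mu(B(0,R))\approx R^{\alpha}$ this yields $\alpha\ge s$. The main obstacle — and the step I would spend the most care on — is establishing that the $s$-content lower bound genuinely implies a matching capacitary lower bound and hence the $R^{s}$ growth of harmonic measure; this requires the Frostman-type comparison between $\cH^{s}_{\infty}$ and the Riesz/Newtonian capacity $\Cap$ appropriate to $\bR^{d+1}$, valid precisely when $s>d-1$, and then threading that through the corkscrew geometry of the tangent domain. The extraction of the tangent measure as harmonic measure of a uniform domain I would quote from the structural lemma that the paper develops (the one advertised in the abstract), rather than reprove.
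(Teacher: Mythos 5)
There are genuine gaps in both halves of your argument. For the first assertion, the paper does not use blow-ups at all: it shows by a Besicovitch covering argument that each set $E_{k,\ell}=\{\xi\in E:\omega_{\Omega}^{x_{0}}(B(\xi,r))>r^{\alpha}/\ell \text{ for } r\le k^{-1}\}$ has finite $\cH^{\alpha}$-measure, and then observes that if $\alpha\le d-1$ each $E_{k,\ell}$ would have finite $\cH^{d-1}$-measure, hence be polar, hence $\omega$-null --- a contradiction. (Note this yields $\alpha>d-1$, which is what the proof actually establishes.) Your proposed mechanism, that the harmonic measure of the blown-up uniform domain satisfies $\mu(B(0,R))\gtrsim R^{d}$ because ``the complement has enough capacity at every scale,'' is false: uniformity is an interior condition (corkscrews plus Harnack chains) and imposes no lower bound whatsoever on the capacity or content of the boundary, so no such growth estimate holds. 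Your subsequent ``borderline exclusion'' of $\alpha=d$ is not an argument.

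For the second assertion the decisive step in your proposal --- that Bourgain's estimate ``upgrades'' to $\mu(B(0,R))\gtrsim R^{s}\,\mu(B(0,1))$ --- is also false. Lemma \ref{l:bourgain} bounds $\omega_{\Omega\cap B}^{x}(B)$ from \emph{below} for $x$ deep inside $B$; it gives no upper bound on $\omega^{x}(B(0,1))$ relative to $\omega^{x}(B(0,R))$, which is what the claimed growth would require. Concretely, Wolff snowflakes with $\dim\omega_{\Omega}>d$ are NTA (so \eqref{e:content} holds with $s=d$), yet at typical boundary points $\omega(B(\xi,r))\approx r^{\beta}$ with $\beta>d$, so the blow-up satisfies $\mu(B(0,R))\approx R^{\beta}\ll R^{d}$. (Also, your two-sided bound $\mu(B(0,r))\approx r^{\alpha}$ is unavailable: the hypothesis controls only the lower density, and Lemma \ref{l:upperlim2} yields only the one-sided bound $\nu(B(x,r))\ge r^{\alpha}$.) The paper's mechanism is instead a dimension count on the \emph{support} of the tangent measure: the lower $\alpha$-regularity $\nu(B(x,r))\ge r^{\alpha}$ forces $\dim\supp\nu\le\alpha$ by a covering argument, while conclusion (5) of Lemma \hyperref[l:lemi]{I} shows the content bound passes to the blow-up, $\cH^{s}_{\infty}(B(\xi,r)\cap\d\tilde{\Omega})\gtrsim r^{s}$, forcing $\dim\d\tilde{\Omega}\ge s$; since $\supp\nu=\d\tilde{\Omega}$, this gives $\alpha\ge s$. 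You correctly identified that the content condition is blow-up stable, but you routed it through a harmonic-measure growth estimate that does not hold rather than through the Hausdorff dimension of the support.
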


Below we present a few simple corollaries of the main results. 
\begin{coriv}
If $\Omega$ is an NTA domain satisfying 
\[ 0< \theta^{\alpha}_{*}(\omega_{\Omega}^{x_{0}},\xi) \leq  \theta^{\alpha,*}(\omega_{\Omega}^{x_{0}},\xi) <\infty\]
for $\xi$ in a set of positive harmonic measure,  then $\alpha=d$. 
\end{coriv}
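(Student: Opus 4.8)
The plan is to obtain \CorollaryIV directly from \TheoremI and \TheoremII, so the whole task reduces to verifying that an NTA domain meets the hypotheses of both. First I would record that an NTA domain is a $C$-uniform domain by \Definition{uniform} (the uniform property is part of the definition of NTA), so the standing ``$\Omega$ uniform'' hypothesis of \TheoremI and \TheoremII is satisfied automatically. Next I would invoke the remark following the definition of non-degeneracy, made precise in \Lemma{ancona}: since NTA domains satisfy the capacity density condition, every point of $\d\Omega$ is non-degenerate. Hence, writing $E\subseteq\d\Omega$ for the set of positive harmonic measure furnished by the hypothesis, every $\xi\in E$ is non-degenerate, so the non-degeneracy requirement of \TheoremI is met on $E$.

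With this $E$ fixed, I would split the sandwiched hypothesis $0<\theta^{\alpha}_{*}(\omega_{\Omega}^{x_{0}},\xi)\le\theta^{\alpha,*}(\omega_{\Omega}^{x_{0}},\xi)<\infty$ into its two halves. From $\theta^{\alpha,*}(\omega_{\Omega}^{x_{0}},\xi)\ge\theta^{\alpha}_{*}(\omega_{\Omega}^{x_{0}},\xi)>0$ and $\theta^{\alpha,*}(\omega_{\Omega}^{x_{0}},\xi)<\infty$ we get condition \eqref{e:densities} on $E$, so \TheoremI applies and yields $\alpha\le d$. From $\theta^{\alpha}_{*}(\omega_{\Omega}^{x_{0}},\xi)\le\theta^{\alpha,*}(\omega_{\Omega}^{x_{0}},\xi)<\infty$ and $\theta^{\alpha}_{*}(\omega_{\Omega}^{x_{0}},\xi)>0$ we get condition \eqref{e:densities2} on $E$, so \TheoremII applies and yields the reverse bound $\alpha\ge d$. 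Combining the two gives $\alpha=d$.

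If one would rather not lean on the exact endpoint behaviour in the first assertion of \TheoremII, the inequality $\alpha\ge d$ can instead be read off from its second assertion: the interior and exterior corkscrew conditions force $\d\Omega$ to separate, inside $B(\xi,r)$, two balls of radius comparable to $r$, and a standard separating-set estimate (projection onto a hyperplane transverse to the segment joining their centres is $1$-Lipschitz and its image covers a $d$-disc of radius $\gtrsim r$) gives $\cH^{s}_{\infty}(B(\xi,r)\cap\d\Omega)\gtrsim r^{s}$ for every $s\le d$; thus \eqref{e:content} holds for each $s\in(d-1,d)$, and \TheoremII gives $\alpha\ge s$ for all such $s$, hence again $\alpha\ge d$. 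Either way the corollary is essentially bookkeeping: the only point demanding genuine care is confirming that ``NTA'' supplies both ``uniform'' and ``every boundary point non-degenerate'' (the latter through the capacity density condition and \Lemma{ancona}) and tracking the strict versus non-strict density inequalities, so that \TheoremI and \TheoremII together pin $\alpha$ down to the single value $d$. No new analytic difficulty arises beyond what is already handled in the proofs of the two main theorems.
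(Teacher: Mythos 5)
Your reduction to \TheoremI and \TheoremII is exactly the paper's strategy, and your fallback argument for the lower bound is, in substance, the paper's own proof; but be aware that the fallback is not optional. The first assertion of \TheoremII cannot deliver $\alpha\ge d$: its proof (sets of finite $(d-1)$-measure are polar, hence harmonic-measure null) only establishes $\alpha>d-1$, and the ``$\alpha>d$'' in the statement must be read as a typo for ``$\alpha>d-1$'' --- otherwise it would contradict \TheoremI and render \CorollaryIV vacuous, whereas a half-space satisfies all of its hypotheses with $\alpha=d$. So the bound $\alpha\ge d$ has to come from the second assertion via \eqn{content}, which is precisely what the paper does: it verifies \eqn{content} with $s=d$ by radially projecting $\d\Omega\cap B$ onto the boundary of an interior corkscrew ball (the exterior corkscrew ball guaranteeing that the projection has image of measure $\gtrsim r_B^d$), applies \TheoremII directly with $s=d$ (legitimate, since $d>d-1$), and concludes $\alpha\ge d$. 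Your orthogonal-projection version of the separating-set estimate and your limiting argument $s\uparrow d$ are equally valid. One further small difference: you obtain non-degeneracy from the CDC and \Lemma{ancona}, which as stated requires $d\ge 2$; the paper instead deduces it from \eqn{content} and \Lemma{bourgain}, which covers $d=1$ as well. Modulo these points your proof is correct and coincides with the paper's.
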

Indeed, it is not difficult to show that NTA domains satisfy \eqn{content} with $s=d$. If $B$ is a ball centered on $\d\Omega$ of radius $r_{B}$ and $B_{0}$ is a ball of radius $r_{B_{0}}$  comparable to $r_{B}$ so that $2B_{0}\subseteq \Omega\cap B$, then the existence of the exterior corkscrew ball implies that the radial projection of $\d \Omega\cap B$ onto $\d B_{0}$ has measure at least a constant times $r_{B_{0}}^{d}$ (and hence at least a constant times $r_{B}^{d}$). Since the radial projection onto $\d B_{0}$ is Lipschitz on $(2B_{0})^{c}$, this implies \eqn{content} with $s=d$. Also, any point satisfying \eqn{content} is non-degenerate by \Lemma{bourgain} below. Thus, the corollary follows from \TheoremI and \TheoremII.

This corollary is particularly interesting in the context of Wolff snowflakes. Recall that if $\mu$ is a Borel probability measure in $\mathbb{R}^{d+1}$, we define its \textit{lower and upper pointwise dimensions} at the point $x \in \supp \mu$ to be 
\[\underline d_\mu(x) = \liminf_{r \to 0} \frac{\log \mu(B(x,r))}{\log r} \;\; \mbox{ and } \;\; \overline d_\mu(x) = \limsup_{r \to 0} \frac{\log \mu(B(x,r))}{\log r}.\]
respectively. The common value $\underline d_\mu(x) =\overline d_\mu(x)=d_\mu(x)$, if it exists, we call it \textit{pointwise dimension} of $\mu$ at $x \in \supp \mu$. Wolff in fact constructs domains $\Omega\subseteq \bR^{d+1}$ where $d_{\omega_{\Omega}}<d$ $\omega_{\Omega}$-almost everywhere or $d_{\omega_{\Omega}}>d$ $\omega_{\Omega}$-almost everywhere. Note that if the upper and lower $\alpha$-densities are finite and positive at a point, this implies the pointwise dimension at that point is $\alpha$ as well. In other words, $\omega_{\Omega}$ having pointwise dimension $\alpha$ at $\xi$ means that for all $\ve>0$, $r^{\alpha+\ve}<\omega_{\Omega}(B(\xi,r))<r^{\alpha-\ve}$ for $r>0$ small enough, while having positive lower density means that $cr^{\alpha}<\omega_{\Omega}(B(\xi,r))<Cr^{\alpha}$ for $r$ small and some constants $c,C>0$. Thus, our results show that while the pointwise dimensions can be noninteger for these Wolff domains, the upper and lower $\alpha$-densities cannot be finite and positive on a set of positive measure.

%


To prove \TheoremI, we will rely heavily on the tangent measures of Preiss \cite{Preiss87}. Recall that if $\mu$ is a Radon measure and $x\in \supp \mu$, then the {\it tangent measures of $\mu$ at $x$}, denoted $\Tan(\mu,x)$, is the set of measures $\nu$ that are weak limits of the form $\nu =\lim_{j\rightarrow\infty}c_{j} T_{x,r_{j}\#}\mu$, where $c_{j}\geq 0$, $r_{j}\downarrow 0$, and 
\[ T_{x,r}(y)= \frac{y-x}{r}.\] 

Tangent measures have been employed to study the relationship between harmonic measure and the geometry of the boundary in several papers, see for example \cite{Bad11}, \cite{KPT09}, \cite{KT99}, and  \cite{KT06}. Other results which do not use tangent measures but employ more quantitative techniques modelled after tangent measure methods include \cite{PTT09}.

 The following is the main lemma we employ, whose proof takes up most of the paper and may be of independent interest.

\begin{lemi}\label{l:lemi}
Let $\Omega\subseteq \bR^{d+1}$ be a uniform domain, $d\geq 1$, and $x_{0}\in \Omega$. Fix  $\delta\in (0,1)$, let $E\subseteq \d\Omega$ be the set of $(\beta,\delta)$-non-degenerate points, and suppose it has positive $\omega_{\Omega}^{x_{0}}$-measure. Then for $\omega_{\Omega}^{x_{0}}$-almost every $\xi_{0}\in E$, $\Tan(\omega_{\Omega}^{x_{0}},\xi_{0})\neq\emptyset$. Moreover, if we have a tangent measure $\mu$ that is the weak limit of $T_{\xi_{0},r_{j}\#}\omega_{\Omega}^{x_{0}}/\omega_{\Omega}^{x_{0}}(B(\xi_{0},r_{j}))$, then we may pass to a subsequence such that the following hold.
\begin{enumerate}
\item $\supp \mu$ is the boundary of a $C'$-uniform domain $\tilde{\Omega}$ that is $\Delta$-regular (see Definition \ref{d:deltauniform} below), where $C'$ depends on $C$ and $d$, and the $\Delta$-regularity data depend additionally on $\delta$ and $\beta$. 
\item There is a uniform subdomain $\Omega^{*}$ dense in $\tilde{\Omega}$ such that for all $x\in \Omega^{*}$, if $\Omega_{j}:=T_{\xi_{0},r_{j}}(\Omega)$, then $x\in \Omega_{j}$ for all sufficiently large $j$. 
\item Let $\omega_{j}:=\omega_{\Omega_{j}}$. For $x\in \Omega^{*}$, $\omega_{j}^{x}$ converges weakly to $\omega_{\tilde{\Omega}}^{x}$. 
\item For continuous functions $f$ vanishing at infinity, the harmonic functions $\int f d\omega_{j}$ converge to $\int f d \omega_{\tilde{\Omega}}$ uniformly on compact subsets of $\Omega^{*}$. 
\item If  \eqn{content} holds, then there exists $c'>0$ depending on $c$ and $d$ so that $\cH_{\infty}^{s}(B(\xi,r)\cap \d\tilde{\Omega})\geq c'r^{s}$ for all $\xi\in \d\tilde{\Omega}$ and $r>0$.
\item Finally, there is $C_{0}$ depending on $d$ and $C$ so that if $B'\subseteq B=B(\xi,r)$  are balls centered on $\d\tilde{\Omega}$ and $B(x,\frac{r}{C'})\subseteq B\cap \Omega$, then
\begin{equation}
C_{0}^{-1} \frac{\mu(B')}{\mu(B)} \leq \frac{\omega_{\tilde{\Omega}}^{x}(B')}{\omega_{\tilde{\Omega}}^{x}(B)} \leq C_{0} \frac{\mu(B')}{\mu(B)}.
\label{e:mainlemma}
\end{equation}
\end{enumerate}
\end{lemi}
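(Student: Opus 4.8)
The plan is to build the limiting domain $\tilde\Omega$ by a compactness argument on the rescaled domains $\Omega_j = T_{\xi_0,r_j}(\Omega)$, then transfer harmonic measure estimates across the limit. First I would note that for $\omega_\Omega^{x_0}$-a.e. $\xi_0\in E$ the density ratios $\omega_\Omega^{x_0}(B(\xi_0,2r))/\omega_\Omega^{x_0}(B(\xi_0,r))$ stay bounded along a subsequence $r_j\downarrow 0$ (by standard differentiation theory, since the upper density is finite a.e. against $\mathcal H^\alpha$ — or more directly, the doubling-type property of harmonic measure for uniform domains at non-degenerate points via \Lemma{bourgain}/\Lemma{ancona}), which gives both nonemptiness of $\Tan(\omega_\Omega^{x_0},\xi_0)$ and a normalized weak limit $\mu$ of $T_{\xi_0,r_j\#}\omega_\Omega^{x_0}/\omega_\Omega^{x_0}(B(\xi_0,r_j))$. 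I would then pass to a further subsequence so that the closed sets $\d\Omega_j$ converge in the Hausdorff metric on every ball $B(0,R)$ (by a diagonal/Blaschke argument) to a closed set $\Sigma$, and show $\Sigma = \supp\mu$: the inclusion $\supp\mu\subseteq\Sigma$ is weak convergence, and the reverse uses the lower regularity of harmonic measure at non-degenerate points (each point of $\d\Omega_j$ carries a definite amount of $\omega_{\Omega_j}$ at every scale, which survives in the limit). The connected component $\tilde\Omega$ of $\bR^{d+1}\setminus\Sigma$ containing the corkscrew points is then the candidate domain.

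For part (1), I would verify the uniformity and $\Delta$-regularity of $\tilde\Omega$ directly from the uniform bounds enjoyed by each $\Omega_j$: a $C$-good curve in $\Omega$ rescales to a $C$-good curve in $\Omega_j$ between any prescribed pair of points, and Hausdorff convergence lets one extract a limiting good curve (via Arzelà–Ascoli on arclength parametrizations) with the same constant $C' = C$, after possibly perturbing endpoints; the $\Delta$-regularity (which I am reading off Definition \ref{d:deltauniform}) is the quantitative nondegeneracy inherited from the $(\beta,\delta)$-condition passing to the limit, since $\eta_\delta(\cdot,r)$ for the rescaled domains is controlled by $\eta_\delta$ for $\Omega$ near $\xi_0$ plus a vanishing error. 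Part (2) is then the statement that the component $\tilde\Omega$ is "Hausdorff-stable": I would take $\Omega^*$ to be the union over rational corkscrew-type points $x$ of the good curves joining them, and for each such $x$ note $\dist(x,\Sigma)>0$ forces $\dist(x,\d\Omega_j)$ bounded below for large $j$, while connectivity through a fixed good curve keeps $x$ on the correct side, hence $x\in\Omega_j$ eventually; density of $\Omega^*$ in $\tilde\Omega$ follows because good curves fill up a uniform domain.

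Parts (3) and (4) are the analytic heart: for $x\in\Omega^*$, I would show $\omega_{\Omega_j}^x \to \omega_{\tilde\Omega}^x$ weakly by using that both sides solve the Dirichlet problem and appealing to stability of harmonic measure under Hausdorff convergence of uniform domains — concretely, for $f\in C_c$, the functions $u_j(x)=\int f\,d\omega_{\Omega_j}^x$ are harmonic, locally uniformly bounded, hence (Harnack + Arzelà–Ascoli) converge locally uniformly along a subsequence to a bounded harmonic $u$ on $\tilde\Omega$; the boundary behavior is pinned down using the exterior/capacity nondegeneracy ($\Delta$-regularity) of $\tilde\Omega$, which guarantees every boundary point is regular and that $u_j$ attains boundary data continuously with modulus uniform in $j$ (this is where \Lemma{bourgain}-type estimates enter), forcing $u=\int f\,d\omega_{\tilde\Omega}$; uniqueness of the limit upgrades "subsequence" to the full sequence and gives (4) as the locally uniform convergence just established. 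Part (5) is immediate: $\mathcal H_\infty^s$ is upper semicontinuous-from-below under Hausdorff convergence in the relevant direction, so the lower content bound \eqn{content} at $\xi_0$ for $\Omega$, which rescales to a scale-invariant bound on $\d\Omega_j$, passes to $\d\tilde\Omega$ with $c'$ depending only on $c,d$.

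Finally, part (6) — the comparison \eqn{mainlemma} — I expect to be the main obstacle. The idea is that $\mu$, being a weak limit of normalized harmonic measures, should itself behave like $\omega_{\tilde\Omega}$ up to multiplicative constants. I would fix balls $B'\subseteq B$ centered on $\d\tilde\Omega$ with an interior corkscrew $B(x,r/C')\subseteq B\cap\tilde\Omega$; approximating these by balls centered on $\d\Omega_j$ with corkscrews inherited from $\Omega$, apply the change-of-pole / comparison principle for harmonic measure in the uniform domains $\Omega_j$ (valid with constants depending only on $C$ and $d$, via the boundary Harnack principle available in uniform CDC-type domains) to get $\omega_{\Omega_j}^{x}(B')/\omega_{\Omega_j}^{x}(B) \approx \omega_{\Omega_j}^{x_0'}(B')/\omega_{\Omega_j}^{x_0'}(B)$ where $x_0'$ is the original rescaled pole; but $\omega_{\Omega_j}^{x_0'}$ is exactly the measure (up to the normalizing constant $\omega_\Omega^{x_0}(B(\xi_0,r_j))$) converging weakly to $\mu$, so passing $j\to\infty$ — using part (3) to handle $\omega_{\Omega_j}^x\to\omega_{\tilde\Omega}^x$ on the left and weak convergence plus boundary regularity to evaluate $\mu$ on the balls $B',B$ (choosing radii so that $\mu(\d B')=\mu(\d B)=0$) — yields \eqn{mainlemma} with $C_0=C_0(d,C)$. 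The delicate points are: controlling that $\mu$ gives no mass to the relevant spheres (handled by choosing a dense set of radii), and ensuring the weak convergence $\omega_{\Omega_j}^{x}\to\omega_{\tilde\Omega}^{x}$ can be tested against indicators of balls, which requires knowing $\omega_{\tilde\Omega}^x(\d B)=0$ and that no mass escapes — both consequences of the nondegeneracy/$\Delta$-regularity of $\d\tilde\Omega$ already secured in part (1).
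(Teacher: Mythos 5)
Your overall architecture (Hausdorff limits of $\d\Omega_j$, extraction of a limit domain, stability of harmonic measure, change of pole for \eqn{mainlemma}) matches the paper's, and your treatment of part (6) is essentially the paper's argument. But there is a genuine gap at the foundation: you assert that the Hausdorff limit $\Sigma$ of the boundaries $\d\Omega_j$ equals $\supp\mu$, arguing that ``each point of $\d\Omega_j$ carries a definite amount of $\omega_{\Omega_j}$ at every scale, which survives in the limit.'' This is false in general. Non-degeneracy is only assumed at points of $E$, not at every boundary point, and even where a pointwise lower bound on $\omega_{\Omega_j}$ holds, the relevant quantity is the \emph{normalized} measure $\mu_j=T_{j\#}\omega_{\Omega}^{x_0}/\omega_{\Omega}^{x_0}(B(\xi_0,r_j))$ seen from the fixed pole; pieces of $\d\Omega_j$ can persist geometrically (hence appear in $\Sigma$) while their $\mu_j$-mass tends to zero, so in general one only gets $\supp\mu\subseteq\Sigma$ with the inclusion possibly strict. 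The paper confronts exactly this: it first builds a uniform domain $\Omega^*$ with $\d\Omega^*=\Sigma$, then proves that every $\xi\in\d\Omega^*\setminus\supp\mu$ satisfies $\omega_j^{x_\xi}(B(\xi))\to 0$ \emph{and} that a definite neighborhood of $\xi$ contains no exterior points of $\Omega^*$ (via Bourgain's lemma run in reverse), and finally \emph{enlarges} $\Omega^*$ to $\tilde\Omega$ by adjoining the balls $\tfrac{\delta}{2}B(\xi)$ so that $\d\tilde\Omega=\supp\mu$ exactly. Without this step your candidate domain (a component of $\Sigma^c$) need not have boundary equal to $\supp\mu$, so part (1) as stated is not established.

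This gap propagates. In parts (3)--(4) the limit function $v_f$ a priori sees boundary data on all of $\Sigma$, and one must show the contribution from $\Sigma\setminus\supp\mu$ vanishes (the paper's identity $v_f=v_{f\one_{\supp\mu}}$); your sketch skips this because you assumed the two sets coincide. In part (5) the content lower bound passes under Hausdorff convergence to $\Sigma$, but a lower bound on $\cH^s_\infty$ of a set says nothing about a subset, so you cannot conclude it for $\d\tilde\Omega\subseteq\Sigma$; the paper instead extracts Frostman measures $\nu_j$ on $\d\Omega_j\cap B(\xi,r)$ and uses Bourgain's lemma to show the limit Frostman measure is supported on $\supp\tilde\omega=\d\tilde\Omega$, not merely on $\Sigma$. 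A secondary, lesser issue: your boundary-regularity step in (3)--(4) needs the H\"older decay of $\omega_j^x(\cnj{B}^{\,c})$ near \emph{every} point of $\supp\mu$, which is only available near rescaled images of $E$; the paper obtains it by working at a density point of the sets $E_n$ and using the fact that $T_j(E_n)$ accumulates on every point of $\supp\mu$, a reduction your proposal does not make explicit.
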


Similar results were shown by Kenig and Toro in \cite{KT99} in the case of NTA domains and \cite{KT06} in the case of two-sided NTA domains. For example, Lemma  3.8 in \cite{KT99} show the above for result for NTA domains, and the tangent measure $\mu$ is what they call the {\it tangent measure at $\infty$} for $\tilde{\Omega}$. The inequality \eqn{mainlemma} isn't stated there but follows from their work. See also Lemma 4.2 in \cite{KT06}). What is special about the above lemma, however, is that it works for more general domains, and secondly, that we can fix a point in the limiting domain $\tilde{\Omega}$ and the scaled harmonic measures $\omega_{j}^{x}$ will converge to the corresponding harmonic measure in $\tilde{\Omega}$. In a recent paper with Xavier Tolsa, we also obtain slightly weaker versions of the blow up results of Kenig and Toro that held for $\Delta$-regular domains without assuming uniformity (see \cite{AMT16}); however $\Delta$-regularity is much stronger than the assumptions in \LemmaI, and we did not obtain \eqn{mainlemma} in the purely $\Delta$-regular (non-uniform) setting.

Note that $\tilde{\Omega}$ in \LemmaI is a uniform $\Delta$-regular domain, and thus a uniform domain satisfying the CDC (if $d\geq 2$). This latter set satisfies many useful properties (such as harmonic measure being doubling, see \cite{AH08}) that the original domain $\Omega$ may not have enjoyed originally.\\

There are several possible venues for improvement and inquiry. Firstly, can we relax the uniformity and nondegeneracy conditions? These are used in quite crucial ways in the proof. Secondly, we note that in Volberg's example, $\theta^{\alpha}_{*}(\cH^{d}|_{\d\Omega},\cdot)=0$ $\cH^{\alpha}$-almost everywhere, and hence $\theta^{\alpha}_{*}(\omega_{\Omega}|,\cdot)$ vanishes $\omega_{\Omega}$-almost everywhere, and we don't know if $\alpha=d$ otherwise.\\

The authors would like to thank Xavier Tolsa for pushing us to eliminate a strong assumption from the main result, and Alexander Volberg for his enlightening discussions and comments on the manuscript.

\section{Preliminaries}

\subsection{Notation}

We will work entirely in in $\bR^{d+1}$ with $d\geq 1$.  We write $a\lesssim b$ if there is $C>0$ so that $a\leq Cb$ and $a\lesssim_{t} b$ if $C$ depends on the parameter $t$. We write $a\sim b$ to mean $a\lesssim b\lesssim a$ and define $a\sim_{t}b$ similarly. To simplify notation, we will implicitly assume that all implied constants depend on $d$

The open ball centered at $\xi\in \bR^{d+1}$ of radius $r>0$ will be denoted $B(\xi,r)$, and in particular, we will write $\bB:=B(0,1)$. If $x\in \bR^{d}$, we will denote the $d$-dimensional ball by $B(\xi,r)=B(\xi,r)\cap\bR^{d}$. If $B$ is a ball, its radius will be denoted $r_{B}$. If $\Omega\subseteq \bR^{d+1}$ is a domain, we will write $\Omega^{\ext}=(\cnj{\Omega})^{c}$.

If $\Omega$ is uniform and $B$ is centered on the boundary, we will write $x_{B}$ for a point such that $B(x_{B},r_{B}/C)\subseteq B\cap \Omega$ (or, to permit us some flexibility, any point $x_{B}$ satisfying this with constant $C$ comparable to the original constant in the definition of uniform domains). 

For sets $A,B\subseteq \bR^{d+1}$, we let 
\[\dist(A,B)=\inf\{|x-y|:x\in A,y\in B\}, \;\; \dist(x,A)=\dist(\{x\},A),\]
and 
\[\diam A=\sup\{|x-y|:x,y\in A\}.\]
For $A\subseteq \bR^{d+1}$, $\alpha>0$, and $\delta\in (0,\infty]$, define
\[\cH^{\alpha}_{\delta}(A)=\inf\ck{\sum r_{i}^{\alpha}: A\subseteq \bigcup B(x_{i},r_{i}),x_{i}\in\bR^{d+1}, \;\; r_{i}<\delta }.\]
We define the {\it $\alpha$-dimensional Hausdorff measure} as
\[\cH^{\alpha}(A)=\lim_{\delta\downarrow 0}\cH^{\alpha}_{\delta}(A),\]
the {\it $d$-dimensional Hausdorff content} as $\cH^{\alpha}_{\infty}(A)$, and the {\it Hausdorff dimension of $A$} as $\dim A=\inf\{\alpha:\cH^{\alpha}(A)=0\}$. See \cite[Chapter 4]{Mattila} for more information about Hausdorff measure.

\subsection{Regularity of harmonic functions}

Here we collect some lemmas about harmonic measure.

\begin{definition}
A domain $\Omega\subseteq \bR^{d+1}$ satisfies the {\it Harnack chain condition} if there is $C>0$ so that for all $\Lambda$ there is $N(\Lambda)$ such that for all $\ve>0$ and $x,y\in \Omega$ with $\dist(\{x,y\},\d\Omega)\geq \ve$ and $|x-y|\leq \Lambda \ve$, there is a chain of balls $B_{1},...,B_{N}\subseteq \Omega$ with 
\begin{enumerate}
\item $N\leq N(\Lambda)$,
\item $r_{B_{i}}/C\leq \dist(B_{i},\d\Omega)\leq Cr_{B_{i}} $ for $i=1,...,N$,
\item $B_{i}\cap B_{i+1}\neq\emptyset$ for $i=1,...,N-1$, and
\item $x\in B_{1}$ and $y\in B_{N}$.
\end{enumerate}
In particular, if $u$ is a positive harmonic function on $\Omega$, then by repeated use of Harnack's inequality on each $B_{i}$,
\begin{equation}
u(x)\sim_{\Lambda} u(y) \;\;\; \mbox{ if } \;\;\; \frac{|x-y|}{\dist(\{x,y\},\d\Omega)}\leq \Lambda.
\label{e:harnack}
\end{equation}
\label{d:harnack-uniform}
\end{definition}

\begin{lemma}[Theorem 2.15 in \cite{AHMNT}]
A domain is uniform if and only if it satisfies the interior corkscrew and Harnack chain conditions quantitatively.
\label{l:ahmnt}
\end{lemma}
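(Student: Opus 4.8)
The plan is to prove the two implications separately; that uniformity implies the interior corkscrew and Harnack chain conditions is the easy direction, while the converse is the substantial part. Throughout write $\delta(z)=\dist(z,\partial\Omega)$.

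\textbf{Uniform $\Rightarrow$ corkscrew $+$ Harnack chain.} For the corkscrew condition, fix $\xi\in\partial\Omega$ and $r\in(0,\diam\partial\Omega)$. Since $\diam\overline\Omega\ge\diam\partial\Omega>r$, there is $p\in\overline\Omega$ with $|p-\xi|>r/2$; joining $\xi$ to $p$ by a good curve $\gamma$ and applying the intermediate value theorem to $t\mapsto|\gamma(t)-\xi|$ produces $t_0\in\gamma$ with $|t_0-\xi|=r/4$, hence $\dist(t_0,\{\xi,p\})=r/4$, so condition (2) of Definition~\ref{d:uniform} gives $\delta(t_0)\ge r/(4C)$ and $B(t_0,r/(4C))\subseteq\Omega\cap B(\xi,r)$. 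For the Harnack chain condition, given $\Lambda$ and $x,y\in\Omega$ with $\dist(\{x,y\},\partial\Omega)\ge\varepsilon$ and $|x-y|\le\Lambda\varepsilon$, set $\varepsilon'=\min(\delta(x),\delta(y))\ge\varepsilon$; from $|\delta(x)-\delta(y)|\le|x-y|\le\Lambda\varepsilon'$ we get $\delta(x),\delta(y)\le(1+\Lambda)\varepsilon'$. On a good curve $\gamma$ from $x$ to $y$ one has $\ell(\gamma)\le C\Lambda\varepsilon'$, and combining (2) with $\delta(t)\ge\delta(x)-|t-x|$ near $x$ and $\delta(t)\ge\delta(y)-|t-y|$ near $y$ shows $\varepsilon'/(2C)\le\delta(t)\le(1+\Lambda+C\Lambda)\varepsilon'$ for all $t\in\gamma$. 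Covering $\gamma$ by balls $B_j=B(t_j,\delta(t_j)/10)$ with $t_j$ ordered along $\gamma$, consecutive ones overlapping, $t_1=x$, $t_N=y$, each $B_j$ has $\dist(B_j,\partial\Omega)=9r_{B_j}$ (a fixed ratio, independent of $\Lambda$), and since $\delta\gtrsim\varepsilon'$ on $\gamma$ while $\ell(\gamma)\lesssim_\Lambda\varepsilon'$, at most $N(\Lambda)\lesssim_\Lambda 1$ such balls are needed.

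\textbf{Corkscrew $+$ Harnack chain $\Rightarrow$ uniform.} It suffices to produce a good curve between $x,y\in\Omega$; boundary points are treated at the end. Put $R=|x-y|$ and assume $\delta(x)\le\delta(y)$. If $R\le\delta(x)$ the segment $[x,y]$ is already good, since for $t\in[x,y]$, $\delta(t)\ge\delta(x)-\min(|t-x|,|t-y|)\ge R-\dist(t,\{x,y\})\ge\dist(t,\{x,y\})$ as $\dist(t,\{x,y\})\le R/2$. If $R>\delta(x)$ and (the main case) $R\lesssim\diam\partial\Omega$, let $\xi_x\in\partial\Omega$ realize $\delta(x)=|x-\xi_x|$ and, using the corkscrew condition at $\xi_x$ with radii $r_k=2^k\delta(x)$ for $k=0,\dots,K$ with $r_K\sim R$, choose corkscrew points $z_k$ with $\delta(z_k)\ge r_k/C$ and $|z_k-\xi_x|<r_k$. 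Consecutive points satisfy $|z_k-z_{k+1}|<3r_k\le 3C\min(\delta(z_k),\delta(z_{k+1}))$, so a Harnack chain with $\Lambda=3C$ joins them by $\le N(3C)$ balls; the elementary identity $\dist(B,\partial\Omega)=\delta(\text{center})-r_B$ (valid when $\dist(B,\partial\Omega)>0$) forces $\delta\sim_C r_B$ on each chain ball, hence all balls in a given chain have radius $\sim_C r_k$, their union lies in $B(\xi_x,C'r_k)$, and a polygonal path $\eta_k$ through their overlaps has length $\lesssim_C r_k$. Splicing a similar short chain from $x$ to $z_0$ with $\eta_0,\dots,\eta_{K-1}$ gives a curve $\gamma_x$ from $x$ to $w_x:=z_K$ with $\ell(\gamma_x)\lesssim_C\sum_k r_k\lesssim_C r_K\sim R$, with $\delta(w_x)\gtrsim R$, $|w_x-x|\lesssim R$, and with $\delta(t)\sim_C r_k\gtrsim_C|t-x|\ge\dist(t,\{x,y\})$ for $t\in\eta_k$. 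Build $\gamma_y$ from $y$ symmetrically (take $w_y=y$ if $\delta(y)\gtrsim R$), join $w_x$ to $w_y$ by a Harnack chain — legitimate since $|w_x-w_y|\lesssim R\lesssim\min(\delta(w_x),\delta(w_y))$ — to obtain $\gamma_0$ with $\delta(t)\gtrsim R\ge\dist(t,\{x,y\})$ on it, and set $\gamma=\gamma_x\cup\gamma_0\cup\gamma_y^{-1}$. Then $\ell(\gamma)\lesssim_C|x-y|$ and $\delta(t)\ge\dist(t,\{x,y\})/C_0$ on $\gamma$, verified piece by piece.

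\textbf{Main obstacle and loose ends.} The real work is entirely in the converse and is a matter of quantitative bookkeeping: every implied constant must stay independent of $x,y$, and in the Harnack-chain step also independent of $\Lambda$, which is why it is crucial to scale the ball radii to the local value of $\delta$ and to use $\dist(B,\partial\Omega)=\delta(\text{center})-r_B$ to carry comparability of $\delta$ along each bounded-length chain — this is exactly what delivers the length bound $\ell(\gamma)\le C|x-y|$ and the nontangential estimate $\delta(t)\ge\dist(t,\{x,y\})/C$ simultaneously. Two routine points remain: (i) the case $R=|x-y|\gtrsim\diam\partial\Omega$ can occur only for $\Omega$ unbounded with bounded boundary, where $\Omega\supseteq\bR^{d+1}\setminus\overline B(\xi_0,\diam\partial\Omega)$, and one joins $x,y$ by first running short corkscrew ladders near $\partial\Omega$ and then travelling through the complement of a fixed ball containing $\partial\Omega$, on which good curves with absolute constants are elementary; (ii) to allow $x$ or $y\in\partial\Omega$ one runs the ladder construction with scales $r_k=2^{-k}R\downarrow 0$, whose tails have vanishing length, so the concatenated curve closes up continuously at the boundary point.
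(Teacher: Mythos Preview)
The paper does not prove this lemma; it is quoted with attribution to Theorem~2.15 of \cite{AHMNT} and used as a black box. Your proposal supplies a self-contained proof following the standard route (which is essentially the argument in \cite{AHMNT}): the forward direction reads off a corkscrew point and a Harnack chain directly from a good curve, and for the converse you build a good curve by concatenating Harnack chains between corkscrew points chosen at dyadic scales $r_k=2^k\delta(x)$ near each endpoint, then bridge the two ladders by one more chain at scale $\sim R$.

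The argument is correct in outline; a couple of places deserve one extra line. First, when you assert that all balls in the $k$-th Harnack chain have radius $\sim_C r_k$, this uses that consecutive overlapping balls have comparable radii (from $r_{B_i}/C\le\dist(B_i,\partial\Omega)\le\delta(p)\le\delta(\text{center}_{i+1})+r_{B_{i+1}}\lesssim r_{B_{i+1}}$ for $p\in B_i\cap B_{i+1}$, and symmetrically) together with the bound $N\le N(3C)$ on the chain length; it would help to say so. Second, in the middle piece $\gamma_0$ you claim $\delta(t)\gtrsim R$; this again needs the chain-length bound to propagate $\delta\sim R$ from $w_x$ across boundedly many balls. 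Finally, in the corkscrew step of the forward direction you should note $|t_0-p|\ge|p-\xi|-|t_0-\xi|>r/4$ to justify $\dist(t_0,\{\xi,p\})=r/4$. The loose-end cases (unbounded $\Omega$ with bounded boundary, and endpoints on $\partial\Omega$) are handled in the expected way.
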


The way nondegeneracy will manifest in our proof is the following lemma.

\begin{lemma}
\label{l:holder}
Suppose $\Omega\subseteq \bR^{d+1}$, $\delta\in (0,1)$, $\xi\in \d\Omega$ and $\omega_{B\cap \Omega}^{x}(\d B(\xi,r)\cap \Omega)  \leq \beta<1$ for $x\in  \d B(\xi,\delta r)\cap \Omega$ and $r\in (0,R)$. Then there is $\alpha=\alpha(\beta,d)$ so that for all $r\in (0,R)$
\begin{equation}\label{e:wholder}
 \omega_{\Omega}^{x}(\cnj{B(\xi,r)}^{c})\lesssim_{\beta,\delta} \ps{\frac{|x-\xi|}{r}}^{\alpha} \mbox{ for } x\in \Omega\cap B(\xi,r).
 \end{equation}
In particular, $\xi$ is a regular point for $\d\Omega$. 
\end{lemma}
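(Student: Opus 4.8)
The plan is to prove \eqn{wholder} by an iteration (telescoping) argument driven by the hypothesis that a fixed fraction of harmonic measure is lost every time the radius shrinks by the factor $\delta$. First I would fix $\xi\in\d\Omega$ and set $\rho_k = \delta^k R$ for $k\geq 0$, and let $B_k = B(\xi,\rho_k)$. The core quantity to control is $m_k := \sup_{x\in \d B(\xi,\delta\rho_{k-1})\cap\Omega}\,\omega_{B_{k-1}\cap\Omega}^{x}\bigl(\cnj{B_{k-1}}^{c}\bigr)$, which by hypothesis is at most $\beta<1$ (note $\d B(\xi,\delta\rho_{k-1})\cap\Omega = \d B(\xi,\rho_k)\cap\Omega$, so the relevant sphere is exactly $\d B_k\cap\Omega$). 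The main engine is the maximum principle/strong Markov property for harmonic measure: for $x\in B_k\cap\Omega$, a Brownian motion started at $x$ must exit $\d B(\xi,\rho_{k-1})\cap\Omega$ across $\d B_k\cap\Omega$ first (since $B_k\subseteq B_{k-1}$), so restricting to the exit distribution on that intermediate sphere and using the hypothesis bound there gives
\[
\omega_{\Omega}^{x}\bigl(\cnj{B_{k-1}}^{c}\bigr)\;\leq\; \sup_{y\in \d B_k\cap\Omega}\omega_{\Omega}^{y}\bigl(\cnj{B_{k-1}}^{c}\bigr)\;\leq\; \beta\cdot \sup_{z\in \d B_k\cap \Omega}\;(\text{something})\,,
\]
and one unwinds this to get a recursion of the form $u_{k-1}\leq \beta\, u_{k-2}$ where $u_k := \sup_{x\in \d B_{k+1}\cap\Omega}\omega_\Omega^x(\cnj{B_k}^c)$ — i.e. each step inward costs a factor $\beta$. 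I would be a little careful here: the clean statement is that $\omega_{\Omega}^{x}(\cnj{B_k}^c)$, for $x$ ranging over the sphere $\d B_{k+1}\cap\Omega$, is at most $\beta$ times its analogue one scale out, because from $\d B_{k+1}\cap\Omega$ the process must cross $\d B_{k+1}\cap\Omega$ and then $\d B_k\cap\Omega$ to escape $\cnj{B_k}^c$, wait — I'd set this up as: to leave $B_{k-1}$ from a point on $\d B_k\cap\Omega$, hit $\d B_k\cap\Omega$ "again" is automatic, so use the hypothesis at scale $\rho_{k-1}$ to bound $\omega_{B_{k-1}\cap\Omega}^x(\d B_{k-1}\cap\Omega)\leq\beta$ and iterate over $\d B_k, \d B_{k+1},\ldots$.

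The iteration then yields, for $x\in \d B_{k}\cap\Omega$, a bound $\omega_\Omega^x(\cnj{B_0}^c)\lesssim_{\beta,\delta}\beta^{k}$ (the hypothesis being available only for $r<R$ means we telescope only down to scale $R$, which is fine). Writing $\beta^k = (\delta^k)^{\alpha}$ with $\alpha := \log\beta/\log\delta > 0$ (both logs negative, so $\alpha>0$, and $\alpha$ depends only on $\beta,\delta$ — I'd absorb the mild $d$-dependence that may sneak in through a Harnack/covering step, matching the stated $\alpha=\alpha(\beta,d)$), we get $\omega_\Omega^x(\cnj{B(\xi,\rho_k)}^c)\lesssim_{\beta,\delta}(\rho_k/R)^{\alpha}$ for $x$ on the sphere $\d B(\xi,\delta\rho_{k-1})\cap\Omega$. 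To go from spheres to the whole ball and from the discrete radii $\rho_k$ to all $r\in(0,R)$, I would use the maximum principle once more: for general $x\in B(\xi,r)\cap\Omega$ with $\delta^{k+1}R \le |x-\xi| < \delta^k R$, Brownian motion from $x$ either exits $B(\xi,r)$ directly or first hits $\d B(\xi, \delta^{k}R)\cap\Omega \supseteq \d B(\xi,|x-\xi|)\cap\Omega$... more carefully, one routes the estimate through the intermediate sphere at radius comparable to $|x-\xi|$ and the sphere at radius comparable to $r$, picking up only constants depending on $\delta$, and concludes $\omega_\Omega^x(\cnj{B(\xi,r)}^c)\lesssim_{\beta,\delta}(|x-\xi|/r)^{\alpha}$.

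For the final sentence — that $\xi$ is a regular boundary point for the Dirichlet problem — I would argue that \eqn{wholder} forces $\omega_\Omega^x(\cnj{B(\xi,r)}^c)\to 0$ as $x\to\xi$ inside $\Omega$, for every fixed $r>0$; equivalently, for any neighborhood $U$ of $\xi$, $\omega_\Omega^x(\Omega\setminus U)\to 0$ as $x\to\xi$. This is precisely the statement that $\lim_{x\to\xi}\omega_\Omega^x = \delta_\xi$ weakly, which is a standard characterization of regularity of $\xi$ (equivalently, via Wiener-type criteria or the barrier characterization, it says that $x\mapsto \omega_\Omega^x(\cnj{B(\xi,r)}^c)$ is a barrier-type function at $\xi$). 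I would cite this equivalence rather than reprove it.

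The main obstacle I anticipate is setting up the telescoping inequality cleanly and honestly: one must be careful that the hypothesis is stated for the harmonic measure of the \emph{truncated} domain $B(\xi,r)\cap\Omega$ and on the sphere $\d B(\xi,r)\cap\Omega$ at the specific radial location $|x-\xi|=\delta r$, so each step of the iteration must correctly identify the "inner sphere at the next scale" with the "outer test sphere at the current scale," and the restriction to $r\in(0,R)$ must be respected throughout. Once the recursion $u_{k}\le\beta\,u_{k-1}$ is established, the rest — converting $\beta^k$ into a power of the radius ratio and passing to non-dyadic radii — is routine bookkeeping with $\delta$-dependent constants.
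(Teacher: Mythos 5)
Your proposal is correct and is essentially the paper's own argument: the paper also iterates the maximum principle over the nested balls $\delta^{j}B(\xi,r)$, losing a factor $\beta$ at each step to get $u\leq\beta^{j}$ on $\delta^{j}B\cap\Omega$, which converts to the H\"older bound with exponent $\alpha=\log\beta/\log\delta$ (the paper phrases the one-step loss via the subsolution $u_{\psi}=\int(1-\phi)\,d\omega_{\Omega}$ for a continuous cutoff $\phi$ rather than the strong Markov property, but these are the same estimate). The regularity conclusion is obtained exactly as you describe.
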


\begin{proof}
Let $B=B(\xi,r)$, $r<R$, and $\phi$ be a continuous function such that $\one_{B}\leq\phi \leq \one_{2B}$ and let $\psi=1-\phi$. Let $u_{\psi}=\int \psi d\omega_{\Omega}$. Then by the maximum principle, for  $x\in \delta B$, 
\[u_{\psi}(x)\leq \omega_{\Omega}^{x}(B^{c})\leq\omega_{\Omega\cap B}^{x}(\d B\cap \Omega)\leq \beta<1.\]
Thus, again by the maximum principle, for $x\in \delta^{2}B\cap\Omega$,
\[u_{\psi}(x)\leq \beta \omega_{\Omega\cap \delta B}^{x}(\d (\delta B)\cap \Omega)\leq \beta^{2}\]
and inductively, we have
\[ u_{\psi}(x)\leq \beta^{j} \mbox{ for }x\in \delta^{j} B\cap\Omega\mbox{ and } j\geq 0.\]
Thus there is $\alpha=\alpha(\beta,\delta)>0$ such that
\[ u_{\psi}(x)\lesssim_{\beta,\delta} \ps{\frac{|x-\xi|}{r}}^{\alpha}\mbox{ for }x\in \delta B\cap \Omega.\]
Having $\phi$ decrease pointwise to $\one_{\cnj{\bB}}$, we have 
\[ \omega_{\Omega}^{x}(\cnj{B}^{c})\lesssim_{\beta,\delta} \ps{\frac{|x-\xi|}{r}}^{\alpha} \mbox{ for }x\in \delta B\cap \Omega.\]
\end{proof}

%
%
%
%

\begin{definition} \cite{Anc86} For a domain $\Omega\subseteq \bR^{d+1}$ and a ball $B$ centered on $\d\Omega$, and $x\in B\cap \Omega$. We say that $\Omega$ is {\it uniformly $\Delta$-regular} if there are $\delta\in (0,1)$ and $R_{\Delta}\in (0,\infty]$ so that 
\begin{equation}
\sup_{\xi\in \d\Omega}\sup_{r<R_{\Delta}}\eta_{\delta}(\xi,r)<1.\
\label{e:duniform}
\end{equation}
\label{d:deltauniform}
\end{definition}

The original definition is given with $\delta=\frac{1}{2}$, but it is not difficult (but still tedious) to show that for uniform domains if the definition holds for one value $\delta$ then it holds for any $\delta\in (0,1)$. 
%
%
%
%
%
%
%

\begin{definition} \cite{Aik01} Let $d\geq 2$ and let $\Cap$ denote the Newtonian capacity. A domain $\Omega\subseteq \bR^{d+1}$ satisfies the {\it capacity density condition} (or CDC) if there is $R_{\Omega}>0$ so that $\Cap(B\backslash \Omega)\gec r_{B}^{d-1}$ for any ball $B$ centered on $\d\Omega$ of radius $r_{B}\in (0,R_{\Omega})$. 
\label{d:cdc}
\end{definition}

The same definition works in the plane with the logarithmic capacity, but we will not use it here.

It was shown in \cite{Anc86}  that the CDC is equivalent to $\Delta$-regularity for $d\geq 2$. 

\begin{theorem}  \cite[Lemma 3]{Anc86} For $d\geq 2$, there is $c\geq 4$ so that if  $\Omega\subseteq\bR^{d+1}$ and $B$ is centered on $\d\Omega$, then $\Cap(B\backslash \Omega)\gec r_{B}^{d-1}$ if and only if there is $\beta\in (0,1)$ so that $\omega_{cB\cap \Omega}^{x}(\d (cB)\cap \Omega)\leq \beta$ on $\d (2B)\cap \Omega$. In particular, $\Omega$ is uniformly $\Delta$-regular if and only if it satisfies the CDC. 
\label{l:ancona}
\end{theorem}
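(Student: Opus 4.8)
The plan is to give the classical potential-theoretic proof (in essence Ancona's argument). First I would set up the standard tools. For a compact $K\subseteq\bR^{d+1}$ (recall $d+1\ge3$) let $\nu_K$ be its equilibrium measure, normalized so that its Newtonian potential $U^{\nu_K}(x):=\int|x-y|^{1-d}\,d\nu_K(y)$ satisfies $0\le U^{\nu_K}\le1$ on $\bR^{d+1}$, equals $1$ quasi-everywhere on $K$, is harmonic off $K$, tends to $0$ at infinity, and has mass $\nu_K(K)\sim_d\Cap(K)$. I would then use repeatedly: monotonicity of $\Cap$ and $\Cap(B(x,r))\sim_d r^{d-1}$ (so that $\Cap(B\setminus\Omega)\gtrsim r_B^{d-1}$ is scale invariant); the identity $U^{\nu_K}(x)=\bP^x(\text{Brownian motion from }x\text{ meets }K)$; and the capacitary weak-type bound that any $\mu\ge0$ with $U^\mu\ge1$ on a compact $K$ has mass $\ge\Cap(K)$, so that $\Cap(\{U^{\nu_E}\ge\lambda\})\le\Cap(E)/\lambda$ for every compact $E$ and $\lambda\in(0,1)$. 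Polar sets being null for every harmonic measure in sight, I would discard them freely in all maximum-principle comparisons.

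For the forward implication (CDC $\Rightarrow$ decay of harmonic measure) I would put $K:=\cnj{2B}\setminus\Omega$, so $c_0 r_B^{d-1}\le\Cap(K)\lesssim_d r_B^{d-1}$, and let $u:=U^{\nu_K}$; this is harmonic on $cB\cap\Omega$ (disjoint from $K$), takes values in $[0,1]$, and equals $1$ q.e.\ on $\d\Omega\cap\cnj{2B}\subseteq K$. A one-line kernel estimate gives $u\gtrsim_d c_0=:\epsilon_0$ on $\cnj{2B}$ and $M:=\sup_{\d(cB)}u\lesssim_d(c-2)^{1-d}$, which is $<1$ once $c$ is large. Then $v:=(1-u)/(1-M)$ is a nonnegative harmonic function on $cB\cap\Omega$ which is $\ge1$ on $\d(cB)\cap\cnj\Omega$, vanishes q.e.\ on $\d\Omega\cap\cnj{2B}$, and is $\ge0$ on the part of $\d\Omega$ in the annulus $\cnj{cB}\setminus\cnj{2B}$; so $v$ dominates the boundary data of $x\mapsto\omega_{cB\cap\Omega}^x(\d(cB)\cap\Omega)$, and the maximum principle gives $\omega_{cB\cap\Omega}^x(\d(cB)\cap\Omega)\le v(x)\le(1-\epsilon_0)/(1-M)$ on $\d(2B)\cap\Omega$. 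Taking $c$ large (depending on $d$ and $c_0$) so that $M<\epsilon_0$, this is a constant $\beta<1$. This is the quantitative Wiener-type regularity half and is routine.

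For the converse (decay $\Rightarrow$ CDC) I would argue by contraposition, and this is where I expect the real friction. Supposing $\Cap(\cnj{cB}\setminus\Omega)\le\kappa r_B^{d-1}$ with $\kappa$ small, put $E:=\cnj{cB}\setminus\Omega$ and $\nu:=\nu_E$. Then $\Cap(\{U^\nu\ge\tfrac12\})\le2\kappa r_B^{d-1}$, while $\d(2B)\cap\Omega^c\subseteq E$ has capacity $\le\kappa r_B^{d-1}$ and hence $\d(2B)\cap\Omega$ still has capacity $\gtrsim_d r_B^{d-1}$ (the capacity of a sphere is stable under deleting a set of small capacity). For $\kappa$ small these are incompatible, so some $x\in\d(2B)\cap\Omega$ has $U^\nu(x)<\tfrac12$, i.e.\ Brownian motion from $x$ meets $E$ with probability $<\tfrac12$; since exiting $cB\cap\Omega$ through $\d\Omega\cap\cnj{cB}\subseteq E$ forces such a meeting, $\omega_{cB\cap\Omega}^x(\d(cB)\cap\cnj\Omega)>\tfrac12$, and a sharper choice of the threshold defeats any $\beta<1$. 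The delicate point, and the main obstacle, is \emph{localization}: the harmonic measure of the far sphere of $cB\cap\Omega$ is governed by \emph{all} of $\d\Omega$ inside $cB$, not just by $\Omega^c$ near the center of $B$, which forces one to take the obstacle at scale $cr_B$ rather than $r_B$; for this reason the per-ball equivalence is correct only up to the dimensional dilations relating $B$, $2B$, $cB$, and it genuinely uses the non-degeneracy built into the $\eta_\delta<1$ formulation. This is immaterial for the stated consequence: running the forward direction at every ball $B(\xi,r)$, $\xi\in\d\Omega$, $r<R_\Omega$ (and adjusting $\delta$, which is harmless for uniform domains by the remark after Definition~\ref{d:deltauniform}), gives $\sup_\xi\sup_{r<R_\Delta}\eta_\delta(\xi,r)\le\beta<1$, i.e.\ uniform $\Delta$-regularity, and running the converse at every ball gives the reverse implication.
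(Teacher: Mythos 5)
The paper offers no proof of this statement --- it is quoted from Ancona \cite{Anc86} --- so there is no internal argument to compare yours against; what you have written is the standard potential-theoretic proof, and it is essentially sound. Two caveats, both of which you partly anticipate. First, in the forward direction your bound $\beta=(1-\epsilon_0)/(1-M)<1$ needs $M<\epsilon_0$, and since $\epsilon_0\gec_d c_0$ (the constant in $\Cap(B\setminus\Omega)\geq c_0r_B^{d-1}$) while $M\lec_d (c-2)^{1-d}$, your $c$ necessarily depends on $c_0$, not only on $d$; so you do not quite get the clause ``there is $c\geq 4$'' with a universal $c$. This is harmless for the ``in particular'' conclusion --- the only part the paper actually uses --- because uniform $\Delta$-regularity asks only for \emph{some} $\delta\in(0,1)$, and your argument supplies $\delta=2/c$. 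Second, your worry about localization in the converse is justified; in fact the ``if'' direction as transcribed is literally false. Take $B=B(\xi,r)$ and $\Omega=\bR^{d+1}\setminus\bigl(\{\xi\}\cup(\cnj{4B}\setminus 3B)\bigr)$: then $\Cap(B\setminus\Omega)=\Cap(\{\xi\})=0$, yet for $c\geq 8$ every $x\in\d(2B)\cap\Omega$ lies in the component $3B\setminus\{\xi\}$ of $cB\cap\Omega$, whose boundary misses $\d(cB)$, so $\omega^x_{cB\cap\Omega}(\d(cB)\cap\Omega)=0$ and the decay condition holds vacuously. Your reformulation --- deducing $\Cap(\cnj{cB}\setminus\Omega)\gec_{\beta} r_B^{d-1}$ from the decay hypothesis --- is the correct one, and running both directions over all centers and scales recovers exactly the equivalence between uniform $\Delta$-regularity and the CDC that the paper needs. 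The remaining tools you invoke (the Frostman normalization of the equilibrium potential, the capacitary Chebyshev bound $\Cap(\{U^{\nu_E}\geq\lambda\})\leq\Cap(E)/\lambda$, subadditivity to get $\Cap(\d(2B)\cap\Omega)\gec r_B^{d-1}$, and the identification of $U^{\nu_E}$ with the hitting probability of $E$) are standard and correctly applied.
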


\begin{lemma} \cite[Lemma 1]{Bou87} \cite[Lemma 3.4]{AHMMMTV15} 
\label{l:bourgain}
Let $d\geq 1$ and $\Omega\subseteq \bR^{d+1}$ be a domain, $\xi\in \d\Omega$, $r>0$, $B:=B(\xi,r)$, and suppose that $\rho:=\cH_{\infty}^{s}(\d\Omega\cap \delta B)/(\delta r)^{s}$ for some $s>d-1$. Then 
\begin{equation}
 \omega_{\Omega\cap B}^{x}(B)\gtrsim \rho\; \mbox{  for all }x\in \delta B\cap \Omega.
 \label{e:bourgain}
 \end{equation}
In particular, if $\xi$ satisfies \eqn{content}, then $\xi$ is a non-degenerate point. If $\cH^{s}(\d\Omega\cap \delta B)/(\delta r_{B})^{s}\gec 1$ for all balls $B$ centered on $\d\Omega$ of radius less than some $r_{0}$, then $\Omega$ is $\Delta$-regular.
\end{lemma}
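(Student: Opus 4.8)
\textbf{Proof proposal for Lemma~\ref{l:bourgain}.}
The plan is to derive the bound \eqref{e:bourgain} from the maximum principle together with a lower bound on the harmonic measure of a single point-mass-like set obtained from a Frostman measure supported on $\d\Omega\cap\delta B$. First I would recall the standard potential-theoretic fact: if $\rho=\cH_\infty^s(\d\Omega\cap\delta B)/(\delta r)^s>0$ with $s>d-1$, then by Frostman's lemma there is a nonzero measure $\sigma$ supported on $K:=\cnj{\d\Omega\cap\delta B}$ with $\sigma(B(x,t))\le t^s$ for all $x,t$ and $\sigma(K)\gtrsim \cH_\infty^s(K)\gtrsim \rho(\delta r)^s$. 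The Newtonian potential $u_\sigma(x)=\int |x-y|^{-(d-1)}\,d\sigma(y)$ is then superharmonic on $\bR^{d+1}$, harmonic off $K\subseteq\cnj{\Omega\cap B}^c\cup\d\Omega$, bounded below, and on $K$ one has $u_\sigma\gtrsim (\delta r)^{-(d-1)}\sigma(K)\gtrsim \rho\,\delta r$ by the usual dyadic annular decomposition of the potential using the growth bound on $\sigma$ (here we use $s>d-1$ so the tail sum converges). Meanwhile for $x\in\delta B\cap\Omega$ a crude bound gives $u_\sigma(x)\lesssim (\delta r)\cdot\rho^{-1}\cdot(\text{normalization})$; the point is that $u_\sigma$ is comparable in size from above and below on the relevant scales, the ratio being a dimensional constant.

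Next I would run the comparison on the bounded domain $\Omega\cap B$. Set $v(x)=\omega_{\Omega\cap B}^x(B)$, i.e. the harmonic measure of the "outer" portion $\d(\Omega\cap B)\cap B$ — more precisely the part of the boundary of $\Omega\cap B$ lying on $\d\Omega$. On $\d(\Omega\cap B)$ the function $v$ equals $1$ on $\d\Omega\cap B$ and $0$ on $\Omega\cap\d B$. Consider $w=c^{-1}(\delta r)^{-1}\min\{u_\sigma,M\}$ suitably truncated and normalized so that $w\le 1$; then $w$ is subharmonic, $w\lesssim \rho$ uniformly, while on $\d\Omega\cap \delta B$ we have $w\gtrsim\rho$. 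Comparing $w$ with $C\rho^{-1}v$ on $\Omega\cap B$ via the maximum principle (checking boundary values: on $\Omega\cap\d B$, $v=0$ but $w$ could be positive, so one instead compares $w-\sup_{\Omega\cap\d B}w$, or better, uses that $u_\sigma$ is small away from $K$ and $K\subseteq\delta B$, so on $\d B$ the potential $u_\sigma\lesssim(\delta r)^{-(d-1)}\sigma(K)/(\text{dist to }K)^{d-1}$ which is controlled) yields $v(x)\gtrsim\rho$ on $\delta B\cap\Omega$. The cleanest route is: $u_\sigma$ is superharmonic, so $x\mapsto \omega^x_{\Omega\cap B}(\cnj{B}^c)\cdot\sup_{\d B}u_\sigma + (\text{harmonic majorant considerations})$... — the standard Bourgain argument just observes $u_\sigma/\!\sup_{\delta B} u_\sigma$ dominates a multiple of $v$ on $\delta B\cap\Omega$ by the maximum principle since $u_\sigma\ge$ its harmonic extension and the extension is $\le v\cdot\sup u_\sigma$ on the $\d\Omega$ part.

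For the two consequences: if $\xi$ satisfies \eqref{e:content} then $\liminf_{r\to0}\rho(\xi,r)>0$, so \eqref{e:bourgain} gives $\omega^x_{\Omega\cap B}(B)\gtrsim 1$ uniformly for $x\in\delta B\cap\Omega$ for all small $r$; hence $\eta_\delta(\xi,r)=\sup_{|x-\xi|=\delta r}\omega^x_{\Omega\cap B}(\d B\cap\Omega) = \sup(1-\omega^x_{\Omega\cap B}(B))\le 1-c<1$ for small $r$, so $\eta_\delta(\xi)\le 1-c<1$ and $\xi$ is $(1-c,\delta)$-non-degenerate. If moreover $\cH^s(\d\Omega\cap\delta B)/(\delta r_B)^s\gec 1$ for all $B$ centered on $\d\Omega$ with $r_B<r_0$, then since $\cH^s_\infty\ge c'\cH^s$ fails in general — rather, one uses $\cH^s_\infty(\d\Omega\cap\delta B)\gtrsim 1$ follows because $\cH^s(\d\Omega\cap\delta B)<\infty$ locally would force $\cH^s_\infty$ comparable at small scales, or more directly one reproves the estimate with $\cH^s$ in place of $\cH^s_\infty$ in Frostman's lemma applied to the locally finite part — giving $\eta_\delta(\xi,r)\le\beta<1$ uniformly in $\xi$ and $r<r_0$, which is exactly \eqref{e:duniform}, i.e. $\Omega$ is uniformly $\Delta$-regular.

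I expect the main obstacle to be the boundary-value bookkeeping in the maximum-principle comparison: $u_\sigma$ does not vanish on $\Omega\cap\d B$, so one must either truncate $u_\sigma$ and carefully track the loss, or exploit that $\supp\sigma\subseteq\delta B$ to bound $u_\sigma$ on $\d B$ by a controlled multiple of its infimum on $\d\Omega\cap\delta B$, so that the normalization constant relating $w$ and $v$ stays dimensional. The annular estimate $u_\sigma\gtrsim (\delta r)^{-(d-1)}\sigma(K)$ on $K$ and the convergence of the tail sum (requiring $s>d-1$) is routine, as is the final translation into the non-degeneracy and $\Delta$-regularity conclusions.
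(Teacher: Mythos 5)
The paper does not actually prove this lemma: it is imported verbatim from Bourgain \cite{Bou87} and \cite[Lemma 3.4]{AHMMMTV15}, with only the remark that ``the general case is identical''. Your reconstruction follows exactly the argument of those references (Frostman measure on $\d\Omega\cap \delta B$, Riesz potential, maximum principle on $\Omega\cap B$), so the architecture is the right one and the main obstacle you single out (the nonzero values of $u_\sigma$ on $\d B$) is indeed the point where $\supp\sigma\subseteq \cnj{\delta B}$ must be used. However, several of your intermediate claims are wrong as written. The correct bounds are: $u_\sigma\lesssim (\delta r)^{s-d+1}$ \emph{everywhere} (this is where the dyadic annuli, the growth $\sigma(B(x,t))\leq t^s$ and $s>d-1$ enter), while $u_\sigma(x)\geq (2\delta r)^{1-d}\sigma(K)\gtrsim \rho\,(\delta r)^{s-d+1}$ for $x\in \delta B$ (no annular decomposition needed) and $\sup_{\d B}u_\sigma\leq ((1-\delta)r)^{1-d}\sigma(K)$. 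Your ``$\rho\,\delta r$'' and the normalization ``$(\delta r)^{-1}$'' should be $\rho(\delta r)^{s-d+1}$ and $(\delta r)^{-(s-d+1)}$, and the assertion that the upper and lower bounds on $u_\sigma$ are comparable ``the ratio being a dimensional constant'' is false: the ratio is exactly $\rho^{-1}$, and that factor of $\rho$ is the entire content of the lemma. Also $u_\sigma$ is superharmonic, not subharmonic; what makes the comparison work is that it is \emph{harmonic} in $\Omega\cap B$ (since $\supp\sigma\subseteq\d\Omega$), so the maximum principle for bounded harmonic functions gives, for $x\in \delta B\cap\Omega$,
\[
(2\delta r)^{1-d}\sigma(K)\;\leq\; u_\sigma(x)\;\leq\; C(\delta r)^{s-d+1}\,\omega_{\Omega\cap B}^{x}(\cnj{B}\cap\d\Omega)\;+\;((1-\delta)r)^{1-d}\sigma(K),
\]
and for $\delta$ small the last term is at most half the left-hand side, so one solves for $\omega^x_{\Omega\cap B}(B)\gtrsim\rho$. (This smallness restriction on $\delta$ is implicit in the standard statement.)

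Two genuine gaps remain. First, for $d=1$ the kernel $|x|^{1-d}$ is constant and the whole construction collapses; one must run the argument with the logarithmic potential. Second, your treatment of the final assertion ($\Delta$-regularity from a lower bound on $\cH^s$ rather than $\cH^s_\infty$) is only a gesture: as you note, $\cH^s_\infty\gtrsim \cH^s$ fails in general, but ``reproving Frostman with $\cH^s$'' does not by itself yield the \emph{quantitative} mass bound $\sigma(K)\gtrsim(\delta r)^s$ that the potential estimate requires. One must genuinely exploit that the $\cH^s$ lower bound is assumed at \emph{every} center and scale to produce a Frostman measure of mass $\gtrsim(\delta r)^s$ (equivalently, to show $\cH^s_\infty(\d\Omega\cap\delta B)\gtrsim(\delta r)^s$); as written, your argument for that step does not go through.
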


For the $d=2$ case, see \cite{Bou87}; the general case is identical, but a proof is given in \cite{AHMMMTV15} as well.

Finally, we recall some lemmas from \cite{MT15}.

\begin{lemma}\cite[Theorem 1.3]{MT15}
Let $d\geq 1$, $\Omega$ be a $C$-uniform domain in $\bR^{d+1}$ and let $B$ be a ball centered at $\partial\Omega$.
Let $p_1,p_2\in\Omega$ be such that $\dist(p_i,B\cap \partial\Omega)\geq c_0^{-1}\,r_{B}$ for $i=1,2$.
Then, for all $E\subset B\cap\d\Omega$,
\begin{equation}
\frac{\omega_{\Omega}^{p_1}(E)}{\omega_{\Omega}^{p_1}(B)}\sim_{c_{0},C} \frac{\omega_{\Omega}^{p_2}(E)}{\omega_{\Omega}^{p_2}(B)}.
\label{e:w/w}
\end{equation}
\label{l:w/w}
\end{lemma}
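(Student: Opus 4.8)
The plan is to reduce the statement to the comparison of harmonic measure with Green's function and then to a boundary Harnack principle, all of which are available in $C$-uniform domains. First I would recall that in a $C$-uniform domain one has the Green function estimate: if $B=B(\xi,r_B)$ is centered on $\d\Omega$ and $x_B$ is a corkscrew point for $B$, then for $E\subseteq \tfrac12 B\cap\d\Omega$ and any $x\in\Omega$ with $\dist(x,B\cap\d\Omega)\gec r_B$ but $x$ far from $E$,
\[
\omega_\Omega^x(E)\sim_{C} r_B^{d-1}\,G_\Omega(x,x_B)\,\frac{\omega_\Omega^{x_B}(E)}{\;}\cdot\frac{1}{\omega_\Omega^{x_B}(B)}\cdot(\dots),
\]
i.e. the comparison principle $\omega_\Omega^x(E)/\omega_\Omega^x(B)\sim \omega_\Omega^{x_B}(E)/\omega_\Omega^{x_B}(B)$ for the specific corkscrew pole $x_B$. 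This is the content of the boundary Harnack principle for uniform (equivalently NTA-with-interior-corkscrews-only, in the weak sense of \cite{AHMNT}) domains; it rests on Bourgain's lower bound \Lemma{bourgain}, the maximum principle, and Harnack chains (\Lemma{ahmnt}). So the main task is to pass from the distinguished pole $x_B$ to an arbitrary pair $p_1,p_2$ satisfying $\dist(p_i,B\cap\d\Omega)\gec c_0^{-1}r_B$.

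Next I would handle that passage. Fix such a $p_i$. Consider the two nonnegative harmonic functions on $\Omega\cap 2B$ (say) given by $u(x)=\omega_\Omega^x(E)$ and $v(x)=\omega_\Omega^x(B)$, both vanishing on $\d\Omega\cap 2B\setminus \cnj{B}$ in the appropriate sense. The boundary Harnack principle in $C$-uniform domains (which again follows from \Lemma{bourgain}, the maximum principle, the Hölder decay estimate in the spirit of \Lemma{holder}, and Harnack chains) gives that $u/v$ is bounded above and below by constants depending only on $c_0$ and $C$ on the set of points of $\Omega$ lying at distance $\gec c_0^{-1}r_B$ from $B\cap\d\Omega$ and at controlled distance from $\d B$ — in particular at $x_B$ and at $p_i$. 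Comparing the two, $u(p_i)/v(p_i)\sim_{c_0,C} u(x_B)/v(x_B)$, and then for the two indices $p_1,p_2$ we chain through $x_B$:
\[
\frac{\omega_\Omega^{p_1}(E)}{\omega_\Omega^{p_1}(B)}\sim_{c_0,C}\frac{\omega_\Omega^{x_B}(E)}{\omega_\Omega^{x_B}(B)}\sim_{c_0,C}\frac{\omega_\Omega^{p_2}(E)}{\omega_\Omega^{p_2}(B)},
\]
which is \eqn{w/w}. One should first prove the inequality for $E\subseteq \tfrac12 B\cap\d\Omega$ and then upgrade to all of $B\cap\d\Omega$ by a standard covering/exhaustion argument, or simply quote it in the stated form since the lemma is imported verbatim from \cite{MT15}.

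The step I expect to be the main obstacle is establishing the boundary Harnack principle in the \emph{weak} uniform setting — i.e. without assuming an exterior corkscrew condition. In the classical NTA case (Jerison–Kenig) one uses the exterior corkscrew to get uniform Hölder continuity up to the boundary; here that must be replaced. The right substitute is Bourgain's estimate \Lemma{bourgain} together with the fact (noted after \Lemma{bourgain}) that this yields nondegeneracy, plus the maximum-principle iteration exactly as in the proof of \Lemma{holder}, giving Hölder decay of harmonic functions vanishing on a boundary ball. Combining that decay with the interior corkscrew lower bound and Harnack chains gives the doubling and comparison estimates needed. Since all of this is precisely \cite[Theorem 1.3]{MT15}, in the paper I would simply cite it; but if a self-contained argument were wanted, the delicate point is getting the two-sided bound on $u/v$ with constants depending only on $c_0$ and $C$, which requires the careful interplay of these three ingredients and is the technical heart of \cite{MT15}.
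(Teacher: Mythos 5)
The paper gives no proof of this lemma at all --- it is imported verbatim as \cite[Theorem 1.3]{MT15} --- and you correctly recognize this, so citing it is exactly what the paper does. Your sketch of how \cite{MT15} actually establishes it (Green-function/change-of-pole comparison at a corkscrew point, then Harnack chains and a Bourgain-type lower bound in place of the exterior corkscrew to move between arbitrary admissible poles) matches the standard route taken there, so there is nothing to correct.
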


\begin{lemma}\label{l:w<G}
\cite[Lemma 10.1]{MT15} Let $d\geq 1$ and let $\Omega\subsetneq \bR^{d+1}$ be a $C$-uniform domain  and $B$ a ball centered at $\partial\Omega$ with radius $r$.  Suppose that there exists a point $x_B \in \Omega$ so that the ball $B_0:=B(x_{B},r/M)$ satisfies $4B_0\subset \Omega\cap B$ for some $M>1$. Then, for  $0<r\leq r_\Omega=r_{\Omega}(d,C)$,
and $\tau>0$,
\begin{equation}\label{e:Green-upperbound}
 \omega_{\Omega}^{x}(B)\sim_{C,M,\tau,d} \omega_{\Omega}^{x_{B}}(B) G_{\Omega}(x,x_{B}) r^{d-1}\,\, \,\,\,\,\,\,\text{for all}\,\, x\in \Omega\backslash  (1+\tau)B.
 \end{equation}
Here, $r_\Omega=\infty$ if $\diam(\Omega)=\infty$.
\end{lemma}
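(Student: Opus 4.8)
\textbf{Plan for the proof of Lemma \ref{l:w<G}.}

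The plan is to compare the harmonic measure $\omega_\Omega^x(B)$ with the Green function $G_\Omega(x,x_B)$ by exploiting the comparability of both quantities with the harmonic measure of a suitable intermediate surface and by using the uniform structure of $\Omega$ to run Harnack chains. First I would reduce to the case where $x$ lies outside $2B$ (the passage from $(1+\tau)B$ to $2B$ costs only a constant by Harnack's inequality via Lemma \ref{l:ahmnt}, since $\Omega$ is uniform and hence satisfies the Harnack chain condition). Set $r=r_B$, $B_0=B(x_B,r/M)$, and note $4B_0\subset \Omega\cap B$. The key is the representation formula: for a fixed ball $B$ centered on $\partial\Omega$, consider the region $U=\Omega\cap 2B$ (or $\Omega \setminus \cnj{B}$, depending on which direction of the estimate one is after) and apply the maximum principle to the two positive harmonic functions $x\mapsto \omega_\Omega^x(B)$ and $x\mapsto G_\Omega(x,x_B)$ on an appropriate subdomain where both are harmonic, namely $\Omega\setminus(\cnj{B_0}\cup \cnj{B})$ or $\Omega\setminus \cnj{2B}$.

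The heart of the argument is to show the two-sided bound on $\partial(2B)\cap\Omega$ (or a comparable surface). Concretely: (i) On $\partial(2B)\cap \Omega$, Harnack's inequality through a chain of balls joining any such point to $x_B$ — whose length is controlled because $\Omega$ is $C$-uniform and $\dist(x_B,\partial\Omega)\gtrsim r/M$ — gives $G_\Omega(x,x_B)\sim_{C,M,d} G_\Omega(z_B,x_B)$ for a fixed corkscrew point $z_B$ at distance $\sim r$ from $\partial\Omega$ just outside $2B$; and separately $\omega_\Omega^x(B)\sim \omega_\Omega^{z_B}(B)$ there by the same chaining. (ii) The normalization: one shows $G_\Omega(z_B,x_B) r^{d-1}\sim_{C,M,d} \omega_\Omega^{x_B}(B)\,\omega_\Omega^{z_B}(B)/\omega_\Omega^{z_B}(B)$ — more honestly, one relates $G_\Omega(\cdot,x_B)$ near $\partial B_0$ to $(r/M)^{1-d}$ via the fundamental-solution asymptotics and the maximum principle on the annular region between $2B_0$ and $4B_0$, together with the fact that $\omega_\Omega^{x_B}(B)\sim 1$ (which holds because $x_B$ is a corkscrew point for $B$, so by Lemma \ref{l:bourgain} or directly by the interior corkscrew condition $\omega_{\Omega\cap B}^{x_B}(B)\gtrsim 1$, and $\omega_\Omega^{x_B}(B)\geq \omega_{\Omega\cap B}^{x_B}(B)$). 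Combining (i) and (ii) yields $\omega_\Omega^x(B)\sim G_\Omega(x,x_B)\,\omega_\Omega^{x_B}(B)\, r^{d-1}$ on $\partial(2B)\cap\Omega$; then the maximum principle on $\Omega\setminus \cnj{2B}$, applied to the difference of the two harmonic functions (both of which vanish on $\partial\Omega\setminus \cnj{2B}$ at regular points and at infinity when $\diam\Omega=\infty$), propagates the comparison to all of $\Omega\setminus(1+\tau)B$.

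The step I expect to be the main obstacle is (ii), the correct \emph{normalization} of the Green function near the corkscrew ball $B_0$: one must show that $G_\Omega(\cdot,x_B)$ is comparable to $r^{1-d}$ on $\partial(4B_0)$, up to constants depending on $C,M,d$. The upper bound follows from the global bound $G_\Omega(y,x_B)\lesssim |y-x_B|^{1-d}$ (valid for $d\geq 2$; for $d=1$ one uses the logarithmic kernel and argues slightly differently, which is why $r\leq r_\Omega$ and the $\diam\Omega=\infty$ caveat appear). The matching lower bound is the delicate point: it requires that $\Omega$ not be ``too thin'' near $x_B$, which is exactly guaranteed by $4B_0\subset\Omega$, so that one can place $x_B$ at the center of a genuine ball inside $\Omega$ and compare $G_\Omega(\cdot,x_B)$ from below with the Green function of the ball $4B_0$ itself via the maximum principle (the latter has an explicit lower bound $\sim r^{1-d}$ on $\partial(2B_0)$). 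Once this normalization is pinned down, everything else is Harnack chaining and the maximum principle, both of which are available because $\Omega$ is uniform; I would also need to double-check the dependence of $r_\Omega$ on $d$ and $C$, which comes from needing $2B$ (hence all the auxiliary balls) to have radius below $\diam\partial\Omega$ so that the corkscrew conditions apply.
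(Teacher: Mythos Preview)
The paper does not prove this lemma; it quotes it from \cite{MT15}. Your outline has the right architecture (compare on a sphere, propagate by the maximum principle), but step (i) contains a genuine error that cannot be patched by the tools you list.

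You claim that for every $x\in\partial(2B)\cap\Omega$ a Harnack chain of \emph{bounded} length joins $x$ to $x_B$ (or to $z_B$), because $\dist(x_B,\partial\Omega)\gtrsim r/M$. That is not what the Harnack chain condition gives: the number of balls is controlled by $|x-x_B|/\dist(\{x,x_B\},\partial\Omega)$, and it is the \emph{minimum} of the two distances that enters. Points $x\in\partial(2B)\cap\Omega$ can lie arbitrarily close to $\partial\Omega$, so no uniform chain exists, and in fact both $G_\Omega(x,x_B)$ and $\omega_\Omega^{x}(B)$ tend to $0$ along such points, so neither is comparable to a constant there. What you actually need on $\partial(2B)\cap\Omega$ is that the \emph{ratio} $\omega_\Omega^{x}(B)/G_\Omega(x,x_B)$ is comparable to a constant, and that is precisely the boundary Harnack principle for uniform domains. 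Proving it (without assuming the CDC or an exterior corkscrew) is the main work of \cite{MT15}; it is not a consequence of interior Harnack chains.

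A second, smaller issue: you invoke $\omega_\Omega^{x_B}(B)\sim 1$ via \Lemma{bourgain} ``or directly by the interior corkscrew condition''. Neither is available here. Bourgain's lemma needs a lower bound on $\cH^{s}_\infty(\partial\Omega\cap\delta B)/r^{s}$, which a bare uniform domain need not satisfy, and the interior corkscrew says nothing about the size of $\Omega^{c}$. This is exactly why the statement carries the factor $\omega_\Omega^{x_B}(B)$ rather than absorbing it into the constant. One direction, $G_\Omega(x,x_B)\,r^{d-1}\,\omega_\Omega^{x_B}(B)\lesssim \omega_\Omega^{x}(B)$, does follow from your ingredients (compare on $\partial(2B_0)$, where Harnack within $4B_0$ is legitimate, then apply the maximum principle on $\Omega\setminus\cnj{2B_0}$); the reverse inequality is the Carleson-type estimate and genuinely requires the boundary Harnack machinery of \cite{MT15}.
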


\subsection{Tangent measures}

We recall some basic results. 

\begin{lemma} \cite[Theorem 14.3]{Mattila} Let $\mu$ be a Radon measure on $\bR^{n}$. If $a\in \bR^{n}$ and 
\[\limsup_{r\rightarrow 0} \frac{\mu(B(a,2r))}{\mu(B(a,r))}<\infty,\]
then every sequence $r_{i}\downarrow 0$ contains a subsequence such that $T_{a,r_{j}\#}\mu/\mu(B(a,r_{j}))$ converges to a measure $\nu\in \Tan(\mu,a)$. 
\label{l:tanexist}
\end{lemma}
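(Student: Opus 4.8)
\textbf{Plan for the proof of Lemma \ref{l:tanexist}.}

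The statement to prove is: if $\mu$ is a Radon measure on $\bR^n$ and $a\in\bR^n$ satisfies
\[
\limsup_{r\rightarrow 0}\frac{\mu(B(a,2r))}{\mu(B(a,r))}<\infty,
\]
then every sequence $r_i\downarrow 0$ has a subsequence $r_j$ along which $T_{a,r_j\#}\mu/\mu(B(a,r_j))$ converges weakly to some $\nu\in\Tan(\mu,a)$. The plan is the standard weak-compactness argument for Radon measures, with the doubling-type hypothesis supplying exactly the uniform local mass bounds needed to prevent mass from escaping to infinity. First I would set $\mu_{r}:=T_{a,r\#}\mu/\mu(B(a,r))$, noting $\mu_r$ is a Radon measure with $\mu_r(\bB)=1$ for every $r>0$ (here $T_{a,r}(y)=(y-a)/r$, so $\mu_r(B(0,t))=\mu(B(a,tr))/\mu(B(a,r))$). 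Since $a\in\supp\mu$, the hypothesis guarantees in particular $\mu(B(a,r))>0$ for all small $r$, so $\mu_r$ is well-defined; and the $\limsup$ condition means there are constants $C_0<\infty$ and $r_0>0$ with $\mu(B(a,2r))\leq C_0\,\mu(B(a,r))$ for all $r<r_0$.

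The key step is to bound $\mu_r(B(0,t))$ uniformly in $r$ for each fixed $t$. For $t=2^k$, iterating the doubling inequality gives $\mu(B(a,2^k r))\leq C_0^{k}\mu(B(a,r))$ whenever $2^{k-1}r<r_0$, hence $\mu_r(B(0,2^k))\leq C_0^{k}$ for all $r$ sufficiently small (depending on $k$); for general $t\leq 2^k$ we get $\mu_r(B(0,t))\leq C_0^k=:M_t<\infty$. Thus $\{\mu_r\}_{r<r_0}$ is uniformly bounded on every compact subset of $\bR^n$. Now given $r_i\downarrow 0$, by the weak-$*$ sequential compactness of Radon measures with locally uniformly bounded mass (e.g. \cite[Theorem 1.23]{Mattila}, via a diagonal argument over a countable family of test functions or over balls $B(0,k)$), there is a subsequence $r_j$ and a Radon measure $\nu$ with $\mu_{r_j}\to\nu$ weakly, i.e. $\int f\,d\mu_{r_j}\to\int f\,d\nu$ for every $f\in C_c(\bR^n)$. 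By definition of $\Tan(\mu,a)$ with $c_j=1/\mu(B(a,r_j))\geq 0$ and $r_j\downarrow 0$, we have $\nu\in\Tan(\mu,a)$, provided $\nu$ is not the zero measure.

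The remaining point — ensuring $\nu\neq 0$ — is the only place any real care is needed, and the lower bound again comes from the doubling hypothesis. Pick a continuous $f$ with $\one_{\bB}\leq f\leq\one_{2\bB}$; then $\int f\,d\mu_{r_j}\geq\mu_{r_j}(\bB)=1$, so $\int f\,d\nu\geq 1$, forcing $\nu(2\bB)\geq 1$ and in particular $\nu\neq 0$. (One may also record $\nu(\bB)\leq 1$ using a test function supported in $\bB$ approximating $\one_{\bB}$ from below together with the outer-regularity/lower-semicontinuity of weak limits on open sets, but this is not needed for the conclusion.) I expect no genuine obstacle here: the entire content is (i) converting the $\limsup$ hypothesis into the iterated bound $\mu_r(B(0,2^k))\lesssim C_0^k$ for small $r$, and (ii) invoking weak-$*$ compactness of Radon measures; the slight subtlety of making sure the compactness is applied on all of $\bR^n$ rather than a fixed ball is handled by the uniform-on-compacta bound together with a standard diagonalization, and the nonvanishing of the limit is immediate from the normalization $\mu_r(\bB)=1$.
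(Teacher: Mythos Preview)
Your proposal is correct and is essentially the standard argument (the one in Mattila's book): iterate the doubling bound to get $\mu_{r}(B(0,2^{k}))\leq C_{0}^{k}$ uniformly for small $r$, invoke weak-$*$ sequential compactness of Radon measures with locally uniformly bounded mass, and use the normalization $\mu_{r}(\bB)=1$ to rule out the zero limit. The paper itself does not give a proof of this lemma; it is simply quoted from \cite[Theorem 14.3]{Mattila} as a preliminary result, so there is nothing to compare against beyond noting that your outline matches the cited source.
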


\begin{lemma} \label{l:same-tangents}
\cite[Lemma 14.5]{Mattila}
Let $\mu$ be a Radon measure on $\bR^{n}$ and $A$ a  measurable set. Suppose $a\in \supp \mu$ is a point of density for $A$, meaning
\[\lim_{r\rightarrow 0} \frac{\mu(B(a,r)\backslash A)}{\mu(B(a,r))}=0.\]
If $c_{i} T_{a,r_{i}\#}\mu \rightarrow \nu\in \Tan(\mu,a)$, then so does $c_{i} T_{a,r_{i}\#}\mu|_{A}$. In particular, this holds for $\mu$ almost every $x\in A$. 
\end{lemma}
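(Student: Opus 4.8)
The plan is to show that the ``defect'' measures $\rho_i := c_i\, T_{a,r_i\#}(\mu|_{\bR^{n}\setminus A})$ converge weakly to the zero measure. Since $\mu = \mu|_A + \mu|_{\bR^{n}\setminus A}$ and pushforward and scaling are linear, $c_i\, T_{a,r_i\#}\mu = c_i\, T_{a,r_i\#}(\mu|_A) + \rho_i$; so once $\rho_i \to 0$ weakly, the hypothesis $c_i\, T_{a,r_i\#}\mu \to \nu$ gives $c_i\, T_{a,r_i\#}(\mu|_A) \to \nu$, which is the assertion. Weak convergence of Radon measures is tested against continuous compactly supported functions, so I would fix such an $f$ with $\supp f \subseteq B(0,R)$. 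Because each $\rho_i$ is a nonnegative measure,
\[
\left| \int f\, d\rho_i \right| \le \|f\|_\infty\, \rho_i(B(0,R)) = \|f\|_\infty\, c_i\, (\mu|_{\bR^{n}\setminus A})\big(T_{a,r_i}^{-1}(B(0,R))\big) = \|f\|_\infty\, c_i\, \mu\big(B(a,r_iR)\setminus A\big),
\]
using that $T_{a,r}(y) = (y-a)/r$, hence $T_{a,r_i}^{-1}(B(0,R)) = B(a,r_iR)$.

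It then remains to prove $c_i\, \mu(B(a,r_iR)\setminus A) \to 0$. Here I would factor
\[
c_i\, \mu\big(B(a,r_iR)\setminus A\big) = c_i\, \mu\big(B(a,r_iR)\big)\cdot \frac{\mu\big(B(a,r_iR)\setminus A\big)}{\mu\big(B(a,r_iR)\big)},
\]
which is legitimate since $\mu(B(a,r_iR))>0$ for $a\in\supp\mu$ and $r_iR>0$. For the first factor, pick a continuous compactly supported $g$ with $\one_{B(0,R)} \le g$; then $c_i\,\mu(B(a,r_iR)) = c_i\, T_{a,r_i\#}\mu(B(0,R)) \le \int g\, d\big(c_i T_{a,r_i\#}\mu\big) \to \int g\, d\nu < \infty$, so this factor is a bounded sequence in $i$. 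For the second factor, $r_i \downarrow 0$ forces $r_iR \to 0$, and the density hypothesis is precisely that $\mu(B(a,r)\setminus A)/\mu(B(a,r)) \to 0$ as $r \to 0$. Hence the product tends to $0$, so $\int f\, d\rho_i \to 0$ for every continuous compactly supported $f$, giving $\rho_i \to 0$ weakly and with it the first statement.

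For the final sentence, one uses that $\mu$-almost every point of $A$ is a density point of $A$ in the stated sense --- this is the Besicovitch differentiation theorem for Radon measures on $\bR^{n}$ applied to $\one_{A}$ (e.g.\ \cite[Corollary 2.14]{Mattila}, already invoked elsewhere in this paper) --- and then applies the first part of the lemma with $a = x$. The argument has no serious obstacle; the only points requiring care are that the density hypothesis concerns balls centered exactly at $a$, which is exactly what the rescalings $T_{a,r_i}$ produce, and that one must invoke the weak convergence $c_i T_{a,r_i\#}\mu \to \nu$ (not merely the density hypothesis) to keep the normalizing masses $c_i\,\mu(B(a,r_iR))$ bounded.
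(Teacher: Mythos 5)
Your proof is correct. The paper does not actually prove this lemma---it cites \cite[Lemma 14.5]{Mattila} and remarks that the statement follows "by an inspection of the proof"---and your argument (splitting off the defect measure $c_i T_{a,r_i\#}(\mu|_{\bR^n\setminus A})$, bounding its mass on $B(0,R)$ by $c_i\mu(B(a,r_iR))\cdot\mu(B(a,r_iR)\setminus A)/\mu(B(a,r_iR))$, and using weak convergence to control the first factor and the density hypothesis to kill the second) is precisely the standard argument underlying that citation, with all details supplied.
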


The above lemma is not stated as such in \cite{Mattila}, but it follows by an inspection of the proof (in particular the last two lines). The way we will use this lemma is the following.

\begin{corollary}
With the assumptions of the previous lemma, if $c_{i} T_{a,r_{i}\#}\mu \rightarrow \nu\in \Tan(\mu,a)$, then for all $\xi\in \supp \nu$ and $\rho>0$, $T_{a,r_{i}}(A)\cap B(\xi,\rho)\neq\emptyset$ for $j$ sufficiently large. In particular, we may find $\xi_{i}\in T_{a,r_{i}}(A)$ so that $\xi_{i}\rightarrow \xi$. 
\label{c:hitsball}
\end{corollary}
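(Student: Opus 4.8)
The plan is to deduce Corollary \ref{c:hitsball} directly from Lemma \ref{l:same-tangents} by a standard weak-convergence argument. First I would fix $\xi\in\supp\nu$ and $\rho>0$, and recall from Lemma \ref{l:same-tangents} that if $c_{i}T_{a,r_{i}\#}\mu\to\nu$, then also $c_{i}T_{a,r_{i}\#}(\mu|_{A})\to\nu$ in the weak topology (at $\mu$-a.e.\ point of density $a$ for $A$, which is the hypothesis in force). Since $T_{a,r_{i}\#}(\mu|_{A})$ is supported on $T_{a,r_{i}}(A)$, the point is to show that a sequence of measures supported on sets $E_{i}:=T_{a,r_{i}}(A)$ converging weakly to a measure charging $B(\xi,\rho)$ forces $E_{i}\cap B(\xi,\rho)\neq\emptyset$ for all large $i$.

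The key step is the following elementary fact about weak convergence: if $\sigma_{i}\to\sigma$ weakly and $U$ is open with $\sigma(U)>0$, then $\liminf_{i}\sigma_{i}(U)\geq\sigma(U)>0$, so $\sigma_{i}(U)>0$ for all large $i$, hence $\supp\sigma_{i}\cap U\neq\emptyset$ for all large $i$. Applying this with $\sigma_{i}=c_{i}T_{a,r_{i}\#}(\mu|_{A})$, $\sigma=\nu$, and $U=B(\xi,\rho)$: since $\xi\in\supp\nu$ we have $\nu(B(\xi,\rho))>0$, so for $i$ large $c_{i}T_{a,r_{i}\#}(\mu|_{A})(B(\xi,\rho))>0$, which means $\mu|_{A}(T_{a,r_{i}}^{-1}(B(\xi,\rho)))>0$; in particular $A\cap T_{a,r_{i}}^{-1}(B(\xi,\rho))\neq\emptyset$, i.e.\ $T_{a,r_{i}}(A)\cap B(\xi,\rho)\neq\emptyset$, as claimed. (One should be slightly careful that the standard portmanteau lower-semicontinuity statement for open sets applies to Radon measures that may have infinite total mass; this is fine since weak convergence here is tested against compactly supported continuous functions and one can restrict to a bounded neighborhood of $\xi$, or simply take $\varphi$ continuous, compactly supported in $B(\xi,\rho)$, nonnegative, not identically zero, with $\int\varphi\,d\nu>0$ — possible since $\xi\in\supp\nu$ — and note $\int\varphi\,d\sigma_{i}\to\int\varphi\,d\nu>0$.)

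For the final sentence of the corollary, I would take $\rho=1/i$ (or any sequence $\rho_{k}\downarrow 0$) and a diagonalization: for each $k$ there is $i_{k}$ such that $T_{a,r_{i}}(A)\cap B(\xi,1/k)\neq\emptyset$ for all $i\geq i_{k}$; choosing $\xi_{i}\in T_{a,r_{i}}(A)\cap B(\xi,1/k)$ for $i_{k}\leq i<i_{k+1}$ produces points $\xi_{i}\in T_{a,r_{i}}(A)$ with $\xi_{i}\to\xi$.

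The only mild obstacle is bookkeeping the hypothesis ``this holds for $\mu$-almost every $x\in A$'' from Lemma \ref{l:same-tangents}: the corollary is stated for a fixed $a$ that is a point of density of $A$, so the conclusion of Lemma \ref{l:same-tangents} (that $c_{i}T_{a,r_{i}\#}(\mu|_{A})\to\nu$) is available directly at $a$ with no further a.e.\ qualification, and nothing else in the argument is delicate. I would expect this proof to be three or four lines once the weak-convergence fact is invoked.
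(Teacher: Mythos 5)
Your proof is correct and is exactly the argument the paper intends (the corollary is stated without proof as an immediate consequence of Lemma \ref{l:same-tangents}): pass to $c_{i}T_{a,r_{i}\#}(\mu|_{A})\to\nu$, test against a nonnegative $\varphi\in C_{c}(B(\xi,\rho))$ with $\int\varphi\,d\nu>0$ to get $\mu\bigl(A\cap T_{a,r_{i}}^{-1}(B(\xi,\rho))\bigr)>0$ for large $i$, and diagonalize over $\rho\downarrow 0$ for the last claim. No gaps.
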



\begin{lemma} \cite[Lemma 14.7]{Mattila}
Let $\mu$ be a Radon measure in $\bR^{n}$, $s>0$, and let $A$ be the set of points $\xi$ in $\bR^{n}$ for which
\[ 0<a\leq \theta_{*}^{s}(\mu,\xi)\leq \theta^{s,*}(\mu,\xi)\leq b<\infty.\]
Then for almost every $\xi\in A$ and every $\nu\in \Tan(\mu,\xi)$,
\begin{equation}
 ar^{s}\leq \nu(B(x,r))\leq br^{s}\;\; \mbox{ for }x\in \supp \nu, 0<r<\infty.
 \label{e:regular}
 \end{equation}
\label{l:regular}
\end{lemma}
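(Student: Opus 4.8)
## Proof proposal for Lemma 14.7 (the last statement, \Lemma{regular})

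The plan is to extract two simple inequalities relating the $s$-densities of $\mu$ at a point $\xi$ to the $s$-densities of a tangent measure $\nu \in \Tan(\mu,\xi)$, valid at $\mu$-a.e.\ $\xi \in A$, and then combine them. First I would fix $s > 0$ and the set $A$ of points with $0 < a \le \theta^s_*(\mu,\xi) \le \theta^{s,*}(\mu,\xi) \le b < \infty$. Since $\theta^{s,*}(\mu,\xi) < \infty$ on $A$, the doubling-type hypothesis $\limsup_{r\to 0}\mu(B(\xi,2r))/\mu(B(\xi,r)) < \infty$ holds $\mu$-a.e.\ on $A$ (if $\theta^s_*>0$ too, this is immediate by comparing $\mu(B(\xi,2r))\lesssim (2r)^s$ with $\mu(B(\xi,r))\gtrsim r^s$ along a suitable sequence; more carefully one uses a standard density argument), so by \Lemma{tanexist} tangent measures exist at $\mu$-a.e.\ point of $A$, and moreover every $\nu \in \Tan(\mu,\xi)$ arises as a limit $\nu = \lim_i c_i T_{\xi,r_i\#}\mu$ with $c_i \sim \mu(B(\xi,r_i))^{-1} \sim r_i^{-s}$ (again using the density bounds). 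The key normalization point is that the constants $c_i$ may be taken comparable to $r_i^{-s}$ with comparability constants controlled by $a$ and $b$.

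Next I would transfer mass estimates through the weak limit. For a fixed $x \in \supp\nu$ and $0 < r < \infty$: by \Corollary{hitsball} (applied with $A$ replaced by the full support, or directly from $x \in \supp\nu$) one can find points $x_i \to x$ with $x_i \in \supp(c_i T_{\xi,r_i\#}\mu)$, i.e.\ $\xi + r_i x_i \in \supp\mu$. Then for the \emph{upper} bound: by weak convergence and the portmanteau theorem applied to the closed ball, for any $\rho > r$,
\[
\nu(\overline{B(x,r)}) \;\le\; \limsup_{i\to\infty} c_i\, \mu\bigl(B(\xi, r_i)(\text{scaled ball})\bigr) \;=\; \limsup_{i\to\infty} c_i\, \mu\bigl(B(\xi+r_i x, r r_i)\bigr).
\]
Since $\xi + r_i x$ is within $o(r_i)$ of the point $\xi + r_i x_i \in \supp\mu$, and using $\theta^{s,*}(\mu,\cdot)\le b$ together with a Vitali/Besicovitch-type argument (or simply the uniform density bound that holds $\mu$-a.e.\ and an approximation), one gets $c_i\,\mu(B(\xi+r_i x, r r_i)) \lesssim b\, r^s$ in the limit, hence $\nu(B(x,r)) \le b r^s$. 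For the \emph{lower} bound: by the lower-semicontinuity of open-ball mass under weak convergence, $\nu(B(x,r)) \ge \limsup_i c_i\, \mu(B(\xi + r_i x, r' r_i))$ for $r' < r$, and now using $\theta^s_*(\mu,\cdot) \ge a$ near the support point gives $\nu(B(x,r)) \ge a r^s$. Letting $r' \uparrow r$ and $\rho \downarrow r$ closes both inequalities, giving \eqn{regular}.

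The main obstacle — and the step requiring genuine care rather than soft arguments — is that the density hypotheses $\theta^s_*(\mu,\xi)\ge a$ and $\theta^{s,*}(\mu,\xi)\le b$ are stated only at the center point $\xi$, whereas to control the rescaled masses $c_i\,\mu(B(\xi+r_i x, \cdot\, r_i))$ one needs density-type control at the \emph{nearby} support points $\xi + r_i x_i$, which move with $i$. The standard fix, which I would carry out, is to restrict attention to the subset $A_0 \subseteq A$ of $\mu$-density points of $A$ at which, additionally, the densities are ``uniformly'' controlled in the sense that for every $\varepsilon > 0$ there is $r_\varepsilon>0$ with $\mu(B(\eta,t)) \le (b+\varepsilon) t^s$ and $\mu(B(\eta,t)) \ge (a-\varepsilon) t^s$ for \emph{all} $\eta \in A$ with $|\eta - \xi|$ small and all $t < r_\varepsilon$; a Egorov-type argument shows $\mu(A \setminus A_0) = 0$. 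Combined with \Lemma{same-tangents}, which ensures $c_i T_{\xi,r_i\#}(\mu|_A) \to \nu$ as well at density points, this lets one replace $\mu$ by $\mu|_A$ in the mass computations, so every relevant ball is centered (essentially) on $A$ and the uniform density bounds apply. Everything else is the routine weak-convergence bookkeeping sketched above.
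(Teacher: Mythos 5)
Your proposal is correct and is essentially the approach the paper itself takes: although the paper quotes this lemma from Mattila without reproving it, its proofs of the two one-sided variants, \Lemma{upperlim} and \Lemma{upperlim2}, are exactly the argument you sketch --- Egorov's theorem to make the density locally uniformly controlled on large subsets $S_{k,\ell,m}$, restriction to $\mu$-density points of those subsets, \Corollary{hitsball} to produce support points $\xi_j\to x$ of the rescaled measures, and the open-set/closed-set portmanteau inequalities (with the radius perturbations $r'\uparrow r$, $\rho\downarrow r$) to transfer the bounds to $\nu$. The one point to keep straight is the normalization: since $\Tan(\mu,\xi)$ is closed under multiplication by arbitrary positive constants, the literal constants $a,b$ in \eqn{regular} can only be meant for the normalized blow-ups $T_{\xi,r_j\#}\mu/\mu(B(\xi,r_j))$ (which is how the lemma is actually invoked in the paper); your choice $c_i\sim\mu(B(\xi,r_i))^{-1}\sim r_i^{-s}$ yields Ahlfors regularity with constants depending on $a$ and $b$, which is all that is needed, and your occasional mislabeling of which portmanteau inequality (open versus closed sets) gives which bound is an expository slip rather than a gap.
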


A measure satisfying \eqn{regular} for some $a,b>0$ is called {\it Ahlfors $s$-regular}. We will need a slightly different version of this result suggested by Xavier Tolsa.

\begin{lemma}
Let $\mu$ be a Radon measure in $\bR^{n}$, $s>0$. Then for $\mu$ almost every $\xi_{0}\in S=\{\xi\in \bR^{n}:0<\theta^{s,*}(\mu,\xi)<\infty\}$, there is $\nu\in \Tan(\mu,\xi_{0})$ and $r_{j}$ such that 
\begin{equation}
\mu_{j}=\frac{T_{\xi_{0},r_{j}\#}\mu }{ \mu(B(\xi_{0},r_{j}))}\rightarrow \nu
\label{e:upperlim1}
\end{equation}
and
\begin{equation}
\nu(B(x,r))\leq  r^{s} \mbox{ for all }x\in \supp \nu, r>0.
\label{e:upperlim2}
\end{equation}
\label{l:upperlim}
\end{lemma}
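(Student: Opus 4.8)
The plan is to combine a density/differentiation argument with the tangent-measure existence result already available (Lemma \ref{l:tanexist}), and then to show that the extra upper bound \eqn{upperlim2} comes for free along a well-chosen sequence of radii. First I would recall that by \cite[Theorem 6.9 and the surrounding results]{Mattila}, for $\mu$-a.e.\ $\xi\in S$ the upper density $\theta^{s,*}(\mu,\xi)$ is finite, and moreover the doubling-type ratio $\limsup_{r\to 0}\mu(B(\xi,2r))/\mu(B(\xi,r))$ is finite at $\mu$-a.e.\ such point (this follows from $\theta^{s,*}(\mu,\xi)<\infty$ together with $\theta^{s,*}(\mu,\xi)>0$, since then $\mu(B(\xi,2r))\lesssim (2r)^s \lesssim \mu(B(\xi,r))$ along a sequence; one must be a bit careful, but one gets it on a full-measure subset). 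Hence Lemma \ref{l:tanexist} applies: every sequence $r_i\downarrow 0$ has a subsequence along which $\mu_i = T_{\xi_0,r_i\#}\mu/\mu(B(\xi_0,r_i))$ converges to some $\nu\in\Tan(\mu,\xi_0)$. So the existence of $\nu$ and of a sequence $r_j$ realizing \eqn{upperlim1} is not the issue; the content is choosing $r_j$ so that the blown-up measure additionally satisfies the clean bound \eqn{upperlim2}.

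The key step is a selection of good radii. Fix $\xi_0\in S$ with $0<\theta^{s,*}(\mu,\xi_0)=:\theta<\infty$. Choose a sequence $r_j\downarrow 0$ along which the $\limsup$ defining $\theta$ is attained, i.e.\ $\mu(B(\xi_0,r_j))/r_j^s \to \theta$; in fact, by a standard maximality trick I would pick each $r_j$ so that it is (essentially) the largest radius $\le 2^{-j}$ for which $\mu(B(\xi_0,r))\ge (1-\epsilon_j)\theta r^s$ — or more robustly, I would use the fact that one can choose $r_j$ so that $\mu(B(\xi_0,r_j))/r_j^s$ is close to $\theta$ while simultaneously $\mu(B(\xi_0,\lambda r_j))/(\lambda r_j)^s \le (1+o(1))\theta$ for every fixed $\lambda\ge 1$. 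Concretely: for each $j$ and each dyadic scale, the function $r\mapsto \mu(B(\xi_0,r))/r^s$ has $\limsup$ equal to $\theta$, so there are arbitrarily small radii where it is within $1/j$ of $\theta$; among radii in $[2^{-j-1},2^{-j}]$ where it exceeds $\theta - 1/j$, one can pass to those where it does not substantially exceed $\theta$ at any comparable larger scale, using that the $\limsup$ over all small scales is exactly $\theta$. After passing to a further subsequence (allowed, since $\Tan$ is defined up to subsequences) I get $r_j$ with
\begin{equation}
\lim_{j\to\infty}\frac{\mu(B(\xi_0,r_j))}{r_j^s}=\theta
\qquad\text{and}\qquad
\limsup_{j\to\infty}\ \sup_{\lambda\ge 1}\frac{\mu(B(\xi_0,\lambda r_j))}{\theta\,(\lambda r_j)^s}\le 1.
\end{equation}
Then along (a subsequence of) these $r_j$, Lemma \ref{l:tanexist} gives $\mu_j\to\nu$ weakly for some $\nu\in\Tan(\mu,\xi_0)$.

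It remains to deduce \eqn{upperlim2}. For $x\in\supp\nu$ and $r>0$, pick $x_j\to x$ with $x_j\in\supp\mu_j$ (possible since $\supp\mu_j\to\supp\nu$ locally in Hausdorff distance, as $\mu_j\to\nu$ weakly). Then $x_j = T_{\xi_0,r_j}(y_j)$ with $y_j\in\supp\mu$, $y_j\to\xi_0$, and for any $\epsilon>0$,
\[
\nu(\overline{B(x,r)})\ \ge\ \limsup_j \mu_j(\overline{B(x,r)})
\ \ge\ \limsup_j \mu_j(B(x_j,r-\epsilon)),
\]
while by weak convergence and openness,
\[
\nu(B(x,r+\epsilon))\ \le\ \liminf_j \mu_j(B(x,r+\epsilon))^{-}\!,
\]
so it suffices to bound $\mu_j(B(x,R))$ for fixed $R$. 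Now $\mu_j(B(x,R)) = \mu(B(\xi_0, r_j x + \xi_0, r_j R))/\mu(B(\xi_0,r_j))$ — rescaling, this is $\mu\big(B(\xi_0 + r_j x, r_j R)\big)/\mu(B(\xi_0,r_j))$, and since $B(\xi_0+r_j x, r_j R)\subseteq B(\xi_0, r_j(|x|+R))$, the second displayed property gives
\[
\mu_j(B(x,R))\ \le\ \frac{\mu(B(\xi_0,r_j(|x|+R)))}{\mu(B(\xi_0,r_j))}
\ \le\ \frac{(1+o(1))\,\theta\,\big(r_j(|x|+R)\big)^s}{(1+o(1))\,\theta\, r_j^s}
\ =\ (1+o(1))(|x|+R)^s .
\]
This is \emph{not} yet the bound $R^s$ I want. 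The fix is the standard one used in Preiss-type arguments: because $x\in\supp\nu$ and $\nu\in\Tan(\mu,\xi_0)$, one is free to recenter — more precisely, the bound \eqn{upperlim2} at an arbitrary center $x\in\supp\nu$ is obtained by instead controlling $\mu_j(B(x_j,R))$ with $x_j\in\supp\mu_j$ tending to $x$, i.e.\ $\mu(B(y_j, r_j R))/\mu(B(\xi_0,r_j))$ with $y_j\in\supp\mu$. Here one uses that $\theta^{s,*}(\mu,y_j)$ is also comparable to $\theta$ along a $\mu$-full set — but $y_j$ varies, so the cleanest route is: replace the global maximality condition on $r_j$ by the condition $\mu(B(\xi_0, r_j)) \ge (1-1/j)\sup\{\mu(B(y,t)): y\in\supp\mu,\ |y-\xi_0|+t\le (1+1/j) r_j\}\big/(\cdot)$...

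Let me instead state the argument I would actually commit to. I would use the characterization: a measure $\nu$ satisfies $\nu(B(x,r))\le r^s$ for all $x\in\supp\nu$, $r>0$ iff $\nu$ arises as a weak limit $\mu_j\to\nu$ where the normalizing radii are chosen via
\[
r_j \ \text{is chosen so that}\quad \mu(B(\xi_0,r_j)) \ \le\ r_j^s \cdot \big(\theta^{s,*}(\mu,\xi_0)+ 1/j\big)
\ \text{and}\ \mu(B(\xi_0,r_j))\ \ge\ (1-1/j)\, r_j^s\,\theta^{s,*}(\mu,\xi_0),
\]
AND, crucially, normalize not by $\mu(B(\xi_0,r_j))$ but implicitly rescale so that the limiting density at the center is $1$: i.e.\ one shows $\nu(B(\xi_0' ,r))\le r^s$ first at the \emph{basepoint} of $\nu$ and then propagates it. At the basepoint $0\in\supp\nu$ this is immediate from the first inequality above, letting $j\to\infty$: for fixed $r$, $\nu(\overline{B(0,r)})\le \liminf_j \mu_j(\overline{B(0,2r)})$... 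I would grind this out carefully in the writeup; the one-line summary is that the correct choice of $r_j$ is a \emph{near-maximizer} of $r\mapsto \mu(B(\xi_0,r))/r^s$ over all small $r$, and near-maximality at $\xi_0$ automatically transfers, in the blow-up limit, to the inequality $\nu(B(x,r))\le r^s$ at every $x\in\supp\nu$ — this is exactly the content of \cite[Theorem 6.9 / proof of the lower-density results, Chapter 6]{Mattila}, adapted.

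\textbf{Main obstacle.} The genuinely delicate point is the transfer of the density bound from the \emph{fixed} center $\xi_0$ (where near-maximality of $\mu(B(\xi_0,r))/r^s$ is arranged by construction) to an \emph{arbitrary} point $x\in\supp\nu$ of the blow-up. Naively one only gets $\nu(B(x,r))\le (\,|x|+r\,)^s$, which is too weak. The resolution is to choose the radii $r_j$ not merely as near-maximizers at $\xi_0$ but so that, for each fixed scale $j$, $\mu(B(\xi_0,r_j))/r_j^s$ is within $1/j$ of $\sup_{0<t\le r_j}\mu(B(\xi_0,t))/t^s$ AND this supremum is within $1/j$ of $\theta$; combined with a diagonal/subsequence extraction and the observation that for $y\in\supp\mu$ near $\xi_0$ one has $\mu(B(y,t))\le \mu(B(\xi_0,|y-\xi_0|+t))$, this forces $\mu(B(y_j,r_j R))\le (1+o(1))\theta\,(r_j R)^s$ as long as $|y_j-\xi_0|\le o(r_j)$, which holds since $y_j = \xi_0 + r_j x_j'$ with $x_j'\to x$ bounded — wait, that gives $|y_j-\xi_0|\sim r_j|x|$, not $o(r_j)$. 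The actual fix, which I will implement, is the one from Mattila's proof: one proves \eqn{upperlim2} first for $x=0$, then notes that translating $\nu$ by any $x\in\supp\nu$ yields another tangent measure of $\mu$ at $\xi_0$ (tangent measures form a cone closed under such translations in this normalized setting, by \cite[Theorem 14.16]{Mattila}-type reasoning), but more simply: re-run the whole argument with the sequence $r_j$ \emph{and} a choice of near-maximizing radii adapted to have the $\limsup\ \mu(B(\xi_0,\cdot))/(\cdot)^s$ attained, which by Mattila Chapter 6 automatically yields the bound at all centers simultaneously. I expect this bookkeeping to be the bulk of the proof, and everything else (existence of $\nu$, weak convergence, $\supp\mu_j\to\supp\nu$) to be routine citations of the lemmas already in this section.
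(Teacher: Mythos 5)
You correctly isolate the real difficulty --- passing from the density bound at the blow-up center $\xi_{0}$ to an arbitrary point $x\in\supp\nu$ --- but your proposal never actually closes it. Your first attempt (containing $B(\xi_{0}+r_{j}x,\,r_{j}R)$ in $B(\xi_{0},r_{j}(|x|+R))$) yields only $(|x|+R)^{s}$, as you note; your second attempt fails because the relevant points $y_{j}$ satisfy $|y_{j}-\xi_{0}|\sim r_{j}|x|$, not $o(r_{j})$, as you also note; and your final fallback is an appeal to ``translating tangent measures'' and to ``Mattila Chapter 6, adapted,'' with the admission that you ``would grind this out carefully in the writeup.'' That deferred grinding \emph{is} the proof, so as written there is a genuine gap. (The translation-invariance route via \cite[Theorem 14.16]{Mattila} is also not a quick fix: it produces a translate of $\nu$ as a tangent measure only up to an uncontrolled normalizing constant, so it does not directly give the clean bound $\nu(B(x,r))\le r^{s}$.)

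The paper's resolution is different in kind and worth internalizing: instead of trying to deduce the bound at $x$ from information at $\xi_{0}$ alone, one uniformizes the pointwise density bound over a \emph{set} before blowing up. Since $\theta^{s,*}(\mu,\cdot)$ is Borel, Egorov's theorem gives sets $S_{k}$ of nearly full measure on which it is continuous, and one further stratifies into $S_{k,\ell,m}=\{\xi: \mu(B(\xi,t))\le(1+\ell^{-1})\theta^{s,*}(\mu,\xi)\,t^{s}\ \text{for all } t<m^{-1}\}$. At a density point $\xi_{0}$ of such a set, Lemma \ref{l:same-tangents} and Corollary \ref{c:hitsball} produce, for each $x\in\supp\nu$, points $\zeta_{j}\in S_{k,\ell,m}$ with $T_{\xi_{0},r_{j}}(\zeta_{j})\to x$. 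One then applies the \emph{uniform} bound defining $S_{k,\ell,m}$ directly at $\zeta_{j}$ (at radius $r_{j}(|\xi_{j}-x|+r)$), uses continuity of $\theta^{s,*}(\mu,\cdot)$ on $S_{k}$ to get $\theta^{s,*}(\mu,\zeta_{j})\to\theta^{s,*}(\mu,\xi_{0})$, and the choice $\mu(B(\xi_{0},r_{j}))/r_{j}^{s}\to\theta^{s,*}(\mu,\xi_{0})$ makes the density at the center cancel exactly, leaving $(1+\ell^{-1})r^{s}$; letting $\ell\to\infty$ finishes. Note also that the paper does not invoke Lemma \ref{l:tanexist} (your claimed a.e.\ doubling bound at all scales is not justified by finiteness of the upper density alone); it instead verifies $\limsup_{j}\mu_{j}(B(0,R))\le R^{s}$ directly from the choice of $r_{j}$ and extracts a limit by diagonalization.
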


\begin{proof}
Note that the function $\theta^{*,s}(\mu,x)$ is a Borel function (see \cite[Chapter 6, Exercise 3]{Mattila}). By Ergorov's theorem, for $k\in \bN$, we may find a set $S_{k} \subseteq S\cap B(0,k )$ so that $\mu(S\cap B(0,k) \backslash S_{k})<k^{-1}$ and $\theta^{*,s}(\mu,\cdot)$ is continuous on $S_{k}$. 

For integers $k,\ell,m$,  let 
\begin{equation}
S_{k,\ell,m}=\{\xi\in S_{k}: \mu(B(\xi,r))/r^{s}\leq (1+\ell^{-1})\theta^{*,s}(\mu,\xi) \mbox{ for }r\in (0,m^{-1})\}
\label{e:Slm}
\end{equation}
and let $S_{k,\ell,m}^{*}$ be the points of density for this set. Then for each $m\in \bN$, almost all of $S_{k}$ is in $\bigcup_{m}S_{k,\ell,m}^{*}$, and hence almost all of $S_{k} $ is in 
\[ S_{k}^{*}:=\bigcap_{\ell}\bigcup_{m}S_{k,\ell,m}^{*}\subseteq S_{k}\]
Thus, almost all of $S$ is in $S^{*}=\bigcup S_{k}^{*}$.

Let $\xi\in S^{*}$.  Pick $r_{j}\downarrow 0$ such that 
\begin{equation}
\frac{\mu(B(\xi,r_{j}))}{r_{j}^{s}}\rightarrow \theta^{*,s}(\mu,\xi)\in (0,\infty).
\label{e:mu/rtheta}
\end{equation}
Let $\mu_{j}=T_{\xi,r_{j}\#} \mu/\mu(B(\xi,r_{j}))$. We first claim we can pick a subsequence so that this converges weakly to a nonzero measure $\nu\in \Tan(\mu,\xi)$. To see this, observe that since $\theta^{*,s}(\mu,\xi)<\infty$, for any $R$ we have that 
\begin{multline*}
\limsup_{j}\mu_{j}(B(0,R))
=\limsup_{j}\frac{\mu(B(\xi,Rr_{j}))}{\mu(B(\xi,r_{j}))}\\
=\limsup_{j} \frac{\mu(B(\xi,Rr_{j}))}{(Rr_{j})^{s}} \cdot \lim_{j\rightarrow\infty}\frac{r_{j}^{s}}{\mu(B(\xi,r_{j}))} R^{s}
\stackrel{\eqn{mu/rtheta}}{\leq} R^{s}.
\end{multline*}
Since this holds for all $R$, we can use a diagonalization argument to pick a subsequence so that $\mu_{j}$ converges weakly on all of $\bR^{n}$ to a finite measure $\nu\in \Tan(\mu,\xi)$.

Let $x\in \supp\nu$ and $r>0$. Let $\ell\in \bN$, so $\xi\in S_{k,\ell,m}^{*}$ for some $k,m$. By \Corollary{hitsball}, we may find $\xi_{j}\in T_{\xi,r_{j}}(S_{k,\ell,m})$ so that $\xi_{j}\rightarrow x$. Let $\zeta_{j}=  T_{\xi,r_{j}}^{-1}(\xi_{j})$. Then $\theta^{*,s}(\mu,\zeta_{j})\rightarrow \theta^{*,s}(\mu,\xi)$ since $\zeta_{j},\xi\in S_{k,\ell,m}$, $\zeta_{j}\rightarrow \xi$, and $\theta^{*,s}(\mu,\cdot)$ is continuous on $S_{k,\ell,m}$. Hence,
\begin{align*}
\nu(B(x,r))
& \leq \liminf_{j\rightarrow\infty}\mu_{j}(B(x,r))
\leq \liminf_{j\rightarrow\infty}\mu_{j}(B(\xi_{j},|\xi_{j}-x|+r))\\
& =\liminf_{j\rightarrow\infty}\frac{\mu(B(\zeta_{j},r_{j}(|\xi_{j}-x|+r)))}{\mu(B(\xi,r_{j}))}\\
& \stackrel{\eqn{Slm}}{\leq} \liminf_{j\rightarrow\infty}\frac{(1+\ell^{-1})\theta^{*,s}(\mu,\zeta_{j}) r_{j}^{s}(|\xi_{j}-x|+r)^{s}}{r_{j}^{s}}\frac{r_{j}^{s}}{\mu(B(\xi,r_{j}))}\\
&  \stackrel{\eqn{mu/rtheta}}{=}(1+\ell^{-1})\theta^{s,*}(\mu,\xi) (0+r)^{s}\cdot \theta^{s,*}(\mu,\xi)^{-1}
=(1+\ell^{-1}) r^{s}
\end{align*}

Since this holds for all $\ell\in \bN$, the lemma follows.

\end{proof}

\begin{lemma}
Let $\mu$ be a Radon measure in $\bR^{n}$, $s>0$. Let
\[
S=\{\xi\in \bR^{n}:0<\theta^{s}_{*}(\mu,\xi)<\infty, \limsup_{r\rightarrow 0}\mu(B(\xi,2r))/\mu(B(\xi,r))<\infty\}.\] 
Then for almost every $\xi_{0}\in S$ there is $\nu\in \Tan(\mu,\xi_{0})$ and $r_{j}$ such that 
\begin{equation}
\mu_{j}=\frac{T_{\xi_{0},r_{j}\#}\mu }{ \mu(B(\xi_{0},r_{j}))}\rightarrow \nu
\label{e:upperlim12}
\end{equation}
and
\begin{equation}
\nu(B(x,r))\geq  r^{s} \mbox{ for all }x\in \supp \nu, r>0.
\label{e:upperlim22}
\end{equation}
\label{l:upperlim2}
\end{lemma}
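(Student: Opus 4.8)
The plan is to mirror the proof of \Lemma{upperlim}, replacing upper densities with lower densities and reversing all the relevant inequalities, while paying attention to the one genuine asymmetry: lower densities are not obviously attained along a subsequence in a way compatible with the mass-bound argument, which is exactly why the extra doubling hypothesis $\limsup_{r\to0}\mu(B(\xi,2r))/\mu(B(\xi,r))<\infty$ appears in the statement.

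First I would run the same measurable-exhaustion step. The function $\theta_*^s(\mu,\cdot)$ is Borel, so by Egorov I can find, for each $k$, a set $S_k\subseteq S\cap B(0,k)$ with $\mu\big((S\cap B(0,k))\setminus S_k\big)<k^{-1}$ on which $\theta_*^s(\mu,\cdot)$ is continuous. Then for integers $k,\ell,m$ I define
\[
S_{k,\ell,m}=\{\xi\in S_k:\mu(B(\xi,r))/r^s\geq (1-\ell^{-1})\theta_*^s(\mu,\xi)\text{ for }r\in(0,m^{-1})\},
\]
let $S_{k,\ell,m}^*$ be its points of density, set $S_k^*=\bigcap_\ell\bigcup_m S_{k,\ell,m}^*$ and $S^*=\bigcup_k S_k^*$, and note as before that $\mu$-almost all of $S$ lies in $S^*$. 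For $\xi\in S^*$ I choose $r_j\downarrow0$ with $\mu(B(\xi,r_j))/r_j^s\to\theta_*^s(\mu,\xi)\in(0,\infty)$; normalising $\mu_j=T_{\xi,r_j\#}\mu/\mu(B(\xi,r_j))$, the doubling hypothesis lets me invoke \Lemma{tanexist} to pass to a subsequence with $\mu_j\to\nu\in\Tan(\mu,\xi)$ weakly, and in particular $\nu\neq0$ (this is where the doubling assumption is essential — without it the mass of $\mu_j$ on a fixed ball could escape to $\infty$ or there would be no control at all, and unlike the upper-density case we have no a priori mass bound to salvage).

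Next, to get the lower bound \eqn{upperlim22}: fix $x\in\supp\nu$ and $r>0$, fix $\ell$, pick $k,m$ with $\xi\in S_{k,\ell,m}^*$, and use \Corollary{hitsball} to find $\xi_j\in T_{\xi,r_j}(S_{k,\ell,m})$ with $\xi_j\to x$; put $\zeta_j=T_{\xi,r_j}^{-1}(\xi_j)\to\xi$, so $\theta_*^s(\mu,\zeta_j)\to\theta_*^s(\mu,\xi)$ by continuity on $S_{k,\ell,m}$. For the estimate I want a lower bound on $\nu(B(x,r))$, so I would use weak convergence in the form $\nu(\overline{B(x,r)})\geq\limsup_j\mu_j(\overline{B(x,r)})$ after first replacing $r$ by a slightly smaller $r'<r$ to land inside: for any $r'<r$ and $j$ large, $B(\zeta_j, r_j r')\subseteq$ ... wait, more carefully, $B(x,r)\supseteq B(\xi_j, r-|\xi_j-x|)$ for large $j$, hence
\[
\nu(B(x,r))\geq\limsup_{j}\mu_j(B(\xi_j,r-|\xi_j-x|))
=\limsup_j\frac{\mu(B(\zeta_j,r_j(r-|\xi_j-x|)))}{\mu(B(\xi,r_j))}.
\]
Applying the defining inequality of $S_{k,\ell,m}$ (valid since $r_j(r-|\xi_j-x|)<m^{-1}$ eventually) gives a lower bound $(1-\ell^{-1})\theta_*^s(\mu,\zeta_j)\,r_j^s(r-|\xi_j-x|)^s$, and dividing by $\mu(B(\xi,r_j))\sim\theta_*^s(\mu,\xi)r_j^s$ and sending $j\to\infty$ (so $|\xi_j-x|\to0$ and $\theta_*^s(\mu,\zeta_j)\to\theta_*^s(\mu,\xi)$) yields $\nu(B(x,r))\geq(1-\ell^{-1})r^s$. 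Letting $\ell\to\infty$ finishes the bound. One technical wrinkle to handle honestly is the use of the open versus closed ball in the weak-convergence lower semicontinuity; I would phrase it so that the lower bound for the open ball $B(x,r)$ is obtained from closed balls of radius $r-\varepsilon$ and then let $\varepsilon\to0$, which is harmless since the conclusion is for all $r>0$.

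The main obstacle, as flagged, is not the inequality-chasing — that is a routine mirror of \Lemma{upperlim} — but ensuring the tangent measure $\nu$ is nonzero and that the weak limit exists at all. In the upper-density proof this is automatic because one directly proves $\mu_j(B(0,R))\leq R^s$ uniformly; here the lower-density normalisation gives no such mass ceiling, so the doubling condition in the hypothesis is doing real work, feeding into \Lemma{tanexist} to produce the subsequential limit and guarantee $\nu\in\Tan(\mu,\xi)$ with $\nu(\overline{\bB})\geq\liminf_j\mu_j(\overline{\bB})=1$ (using that $1=\mu_j(\overline{B(0,1)})$ and, again, taking care with open/closed balls). With that in hand, every other step transcribes directly.
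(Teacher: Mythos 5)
Your proposal is correct and follows essentially the same route as the paper: the same Egorov/density decomposition into the sets $S_{k,\ell,m}$, the same use of Lemma \ref{l:tanexist} (via the doubling hypothesis, whose role you correctly identify) to extract the weak limit along radii realizing the lower density, and the same estimate via Corollary \ref{c:hitsball} with the open/closed ball issue resolved by the same standard approximation the paper uses. No gaps.
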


\begin{proof}
Again, $\theta^{s}_{*}(\mu,x)$ is a Borel function (see \cite[Chapter 6, Exercise 3]{Mattila}), so Ergorov's theorem implies for $k\in \bN$, we may find a set $S_{k} \subseteq S\cap B(0,k )$ so that $\mu(B(S\cap B(0,k))\backslash S')<k^{-1}$ and $\theta^{s}_{*}(\mu,\cdot)$ is continuous on $S_{k}$. 

For integers $k,\ell,m$,  let 
\begin{equation}
S_{k,\ell,m}=\{\xi\in S_{k}: \mu(B(\xi,r))/r^{s}\geq (1-\ell^{-1})\theta^{s_{*}}(\mu,\xi) \mbox{ for }r\in (0,m^{-1})\}
\label{e:Slm2}
\end{equation}
and let $S_{k,\ell,m}^{*}$ be the points of density for this set. Then for each $m\in \bN$, almost all of $S_{k}$ is in $\bigcup_{m}S_{k,\ell,m}^{*}$, and hence almost all of $S_{k} $ is in 
\[ S_{k}^{*}:=\bigcap_{\ell}\bigcup_{m}S_{k,\ell,m}^{*}\subseteq S_{k}\]
Thus, almost all of $S$ is in $S^{*}=\bigcup S_{k}^{*}$.

Let $\xi\in S^{*}$.  Pick $r_{j}\downarrow  0$ such that 
\begin{equation}
\frac{\mu(B(\xi,r_{j}))}{r_{j}^{\alpha}}\rightarrow \theta^{d}_{*}(\mu,\xi)\in (0,\infty).
\label{e:mu/rtheta2}
\end{equation}
By \Lemma{tanexist} and the definition of $S$, we can pass to a subsequence so that $\mu_{j}=T_{\xi,r_{j}\#} \mu/\mu(B(\xi,r_{j}))$ converges weakly to a nonzero measure $\nu\in \Tan(\mu,\xi)$. Let $x\in \supp\nu$ and $r>0$. Let $\ell\in \bN$, so $\xi\in S_{k,\ell,m}^{*}$ for some $k,m$. By \Corollary{hitsball}, we may find $\xi_{j}\in T_{\xi,r_{j}}(S_{k,\ell,m})$ so that $\xi_{j}\rightarrow x$. Let $\zeta_{j}=  T_{\xi,r_{j}}^{-1}(\xi_{j})\in S_{k,\ell,m}$. Then
\begin{align*}
\limsup_{j\rightarrow\infty} & \mu_{j}(B(x,r))
 \geq \limsup_{j\rightarrow\infty}\mu_{j}(B(\xi_{j},r-|\xi_{j}-x|))\\
& =\limsup_{j\rightarrow\infty}\frac{\mu(B(\zeta_{j},r_{j}(r-|\xi_{j}-x|)))}{\mu(B(\xi,r_{j}))}\\
& \stackrel{\eqn{Slm2}}{\geq} \limsup_{j\rightarrow\infty}\frac{(1-\ell^{-1})\theta^{s}_{*}(\mu,\zeta_{j}) r_{j}^{s}(r-|\xi_{j}-x|)^{s}}{r_{j}^{s}}\frac{r_{j}^{s}}{\mu(B(\xi,r_{j}))}\\
&  \stackrel{\eqn{mu/rtheta2}}{=}(1-\ell^{-1})\theta^{s}_{*}(\mu,\xi) (r-0)^{s}\cdot \theta^{s}_{*}(\mu,\xi)^{-1}
=(1-\ell^{-1}) r^{s}
\end{align*}

Since this holds for all $\ell\in \bN$, for all $\ve>0$, we have
\[
\nu(B(x,r+\ve))
\geq \nu(\cnj{B(x,r)})\geq \limsup_{j\rightarrow\infty}\mu_{j}(\cnj{B(x,r)})
\geq  \mu_{j}(B(x,r)) \geq  r^{s}.\]
Thus $\nu(\cnj{B(x,r)})\geq r^{s}$ for all $x\in \supp \nu$ and $r>0$. This easily implies $\nu({B(x,r)})\geq r^{s}$  for all $x\in \supp \nu$ and $r>0$.

\end{proof}

\begin{lemma}
Suppose $\Omega\subseteq \bR^{d+1}$ is a uniform domain, $x_{0}\in \Omega$, $\xi_{0}\in \d\Omega$ is non-degenerate, and $r_{j}\rightarrow 0$. Then there is a subsequence such that $\mu_{j}:=T_{\xi_{0},r_{j}\#}\omega_{\Omega}^{x_{0}}/\omega_{\Omega}^{x_{0}}(B(\xi_{0},r_{j}))$ converges weakly to a measure $\nu\in \Tan(\omega_{\Omega}^{x_{0}},\xi_{0})$. 
\label{l:tanonempty}
\end{lemma}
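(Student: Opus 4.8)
The plan is to verify the asymptotic doubling hypothesis of \Lemma{tanexist} for the Radon measure $\mu=\omega_{\Omega}^{x_{0}}$ at $\xi_{0}$, i.e.\ to show
\[
\limsup_{r\to0}\frac{\omega_{\Omega}^{x_{0}}(B(\xi_{0},2r))}{\omega_{\Omega}^{x_{0}}(B(\xi_{0},r))}<\infty ,
\]
together with the fact (which will fall out of the same estimate) that $\xi_{0}\in\supp\omega_{\Omega}^{x_{0}}$. Granting these, \Lemma{tanexist} applied with $a=\xi_{0}$ immediately produces from any $r_{j}\downarrow0$ a subsequence along which $\mu_{j}=T_{\xi_{0},r_{j}\#}\omega_{\Omega}^{x_{0}}/\omega_{\Omega}^{x_{0}}(B(\xi_{0},r_{j}))$ converges weakly to some $\nu\in\Tan(\omega_{\Omega}^{x_{0}},\xi_{0})$, which is exactly the assertion.

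To set up, I would use non-degeneracy of $\xi_{0}$ to fix $\beta,\delta\in(0,1)$ with $\eta_{\delta}(\xi_{0})\le\beta$, pick $\beta'\in(\beta,1)$, and choose $R>0$ with $\eta_{\delta}(\xi_{0},s)\le\beta'$ for all $s\in(0,R)$; as in the proof of \Lemma{holder}, the maximum principle then upgrades this to $\omega_{B(\xi_{0},s)\cap\Omega}^{x}(\d B(\xi_{0},s)\cap\Omega)\le\beta'$ for every $x\in B(\xi_{0},\delta s)\cap\Omega$, not just on the sphere $|x-\xi_{0}|=\delta s$. The crucial step is a lower bound at an interior corkscrew point: for $r$ small (so that $\delta r/2<\diam\d\Omega$ and $r/2<R$), let $p$ be the center of an interior corkscrew ball of $B(\xi_{0},\delta r/2)$, so $|p-\xi_{0}|<\delta r/2$ and $\dist(p,\d\Omega)\ge\delta r/(2C)$. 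Comparing Brownian motion in $\Omega$ with Brownian motion in the \emph{bounded} domain $B(\xi_{0},r/2)\cap\Omega$: from $p$ the probability of leaving the latter through its relatively open spherical part $\d B(\xi_{0},r/2)\cap\Omega$ is $\omega_{B(\xi_{0},r/2)\cap\Omega}^{p}(\d B(\xi_{0},r/2)\cap\Omega)\le\beta'$, so with probability at least $1-\beta'$ it leaves through $\d\Omega\cap\overline{B(\xi_{0},r/2)}$, and therefore $\omega_{\Omega}^{p}(B(\xi_{0},r)\cap\d\Omega)\ge\omega_{\Omega}^{p}(\overline{B(\xi_{0},r/2)}\cap\d\Omega)\ge1-\beta'$.

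I would then transfer this bound to the fixed pole $x_{0}$ via change of pole. Both $x_{0}$ and $p$ lie at distance $\gtrsim_{C,\delta}r$ from $B(\xi_{0},2r)\cap\d\Omega$: for $p$ this is $\dist(p,\d\Omega)\ge\delta r/(2C)$, and for $x_{0}$ it holds once $r$ is below a threshold depending on $|x_{0}-\xi_{0}|$. Hence \Lemma{w/w}, applied with the ball $B(\xi_{0},2r)$, the poles $x_{0},p$, and the set $E=B(\xi_{0},r)\cap\d\Omega$, together with $\omega_{\Omega}^{p}(B(\xi_{0},2r)\cap\d\Omega)\le1$, gives
\[
\frac{\omega_{\Omega}^{x_{0}}(B(\xi_{0},r))}{\omega_{\Omega}^{x_{0}}(B(\xi_{0},2r))}\ \gtrsim_{C,\delta}\ \frac{\omega_{\Omega}^{p}(B(\xi_{0},r)\cap\d\Omega)}{\omega_{\Omega}^{p}(B(\xi_{0},2r)\cap\d\Omega)}\ \ge\ 1-\beta' ,
\]
i.e.\ $\omega_{\Omega}^{x_{0}}(B(\xi_{0},2r))\le K\,\omega_{\Omega}^{x_{0}}(B(\xi_{0},r))$ for all small $r$, with $K=K(\beta,\delta,C,d)$; this is the desired doubling bound. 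Finally, since $\omega_{\Omega}^{p}(B(\xi_{0},r)\cap\d\Omega)\ge1-\beta'>0$ and $y\mapsto\omega_{\Omega}^{y}(B(\xi_{0},r)\cap\d\Omega)$ is a nonnegative harmonic function on the connected domain $\Omega$, Harnack's inequality forces $\omega_{\Omega}^{x_{0}}(B(\xi_{0},r))>0$ for all small $r$, so $\xi_{0}\in\supp\omega_{\Omega}^{x_{0}}$, and the lemma follows from \Lemma{tanexist}.

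I expect the main obstacle to be the interior-corkscrew lower bound $\omega_{\Omega}^{p}(B(\xi_{0},r)\cap\d\Omega)\gtrsim1$: this is the only place non-degeneracy is genuinely used, and the subtlety is that harmonic mass near $\xi_{0}$ must not escape — either to infinity when $\Omega$ is unbounded ($\omega_{\Omega}^{p}$ need not be a probability measure) or across the sphere $\d B(\xi_{0},r/2)$ — which is why the argument is routed through the bounded domain $B(\xi_{0},r/2)\cap\Omega$, where harmonic measure \emph{is} a probability measure. The remaining points — the maximum-principle upgrade of the non-degeneracy bound off the sphere, the (harmless) boundary bookkeeping in splitting $\partial\bigl(B(\xi_{0},r/2)\cap\Omega\bigr)$ into its spherical and $\d\Omega$ pieces, and the bound $\dist(x_{0},B(\xi_{0},2r)\cap\d\Omega)\gtrsim r$ for small $r$ — are routine.
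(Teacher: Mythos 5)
Your proof is correct and follows essentially the same route as the paper: both verify the asymptotic doubling hypothesis of Lemma \ref{l:tanexist} by combining the change-of-pole estimate of Lemma \ref{l:w/w} with the lower bound $\omega_{\Omega\cap B(\xi_0,r)}^{p}(B(\xi_0,r)\cap\d\Omega)\geq 1-\beta'$ at a corkscrew point, which is exactly where non-degeneracy enters. The only differences are cosmetic (you take the corkscrew point of $B(\xi_0,\delta r/2)$ and apply Lemma \ref{l:w/w} directly with the two poles, whereas the paper uses the corkscrew point of $B(\xi_0,\delta r)$ plus a Harnack step), and your explicit check that $\xi_0\in\supp\omega_{\Omega}^{x_0}$ is a welcome addition that the paper leaves implicit.
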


\begin{proof}
Suppose $\xi$ is $(\beta,\delta)$-non-degenerate for some $\beta,\delta\in(0,1)$. By \cite[Theorem 14.3]{Mattila}, we need only show that 
\begin{equation}
\limsup_{r\rightarrow 0} \frac{\omega_{\Omega}^{x_{0}}(B(\xi_{0},2r))}{\omega_{\Omega}^{x_{0}}(B(\xi_{0},r))}<\infty.
\label{e:dublin}
\end{equation}
Note that for $r>0$ small enough, $x_{0}\not\in B(\xi_{0}, 4r)$, and so we may apply \Lemma{w/w} to get
\begin{align}
\frac{\omega_{\Omega}^{x_{0}}(B(\xi_{0},2r))}{\omega_{\Omega}^{x_{0}}(B(\xi_{0},r))}
& \stackrel{\eqn{w/w}}{\sim}_{C}  
\omega_{\Omega}^{x_{B(\xi_{0},2r)}}(B(\xi_{0},r))^{-1}
\stackrel{\eqn{harnack}}{\sim}_{C,\delta}
\omega_{\Omega}^{x_{B(\xi_{0},\delta r)}}(B(\xi_{0},r))^{-1} \notag \\
& \leq \omega_{\Omega\cap B(\xi_{0},r)}^{x_{B(\xi_{0},\delta r)}}(B(\xi_{0},r)\cap \d\Omega)^{-1}
\label{e:dublin2}
\end{align}
Thus, since $\xi_{0}$ is non-degenerate, we now have \eqn{dublin} since
\[
\limsup_{r\rightarrow 0} \frac{\omega_{\Omega}^{x_{0}}(B(\xi_{0},2r))}{\omega_{\Omega}^{x_{0}}(B(\xi_{0},r))}
\stackrel{\eqn{dublin2}}{\lec }
\limsup_{r\rightarrow 0} \omega_{\Omega\cap B(\xi_{0},r)}^{x_{B(\xi_{0},\delta r)}}(B(\xi_{0},r)\cap \d\Omega)^{-1}
<\infty.\]

\end{proof}

\section{Proof of \LemmaI}

Let $\Omega\subseteq \bR^{d+1}$ be a uniform domain and $\xi_{0}\in \d\Omega$ and $r_{j}\rightarrow 0$. Let $T_{j}=T_{\xi_{0},r_{j}}$ and suppose $\mu_{j}=T_{j\#}\omega_{\Omega}^{x_{0}}/\omega_{\Omega}^{x_{0}}(B(\xi_{0},r_{j}))$ converges weakly to a measure $\mu$. Let $\Omega_{j}=T_{j}(\Omega)$. Pass to a subsequence so that $\d\Omega_{j}\cap \cnj{B(0,n)}$ converges in the Hausdorff metric to a compact set $\Sigma_{n}$. Note that $\Sigma_{n}\subseteq \Sigma_{n+1}$, otherwise there is $x\in \Sigma_{n}$ of distance $r>0$ from $\Sigma_{n+1}$. For large $j$ there is $\xi_{j}\in \d\Omega_{j}\cap \cnj{B(0,n)}\cap B(x,r/2)$. Since $\xi_{j}\in \d\Omega_{j}\cap \cnj{B(0,n+1)}$, $r/2\leq \dist(\xi_{j},\Sigma_{n+1})\rightarrow 0$ as $j\rightarrow \infty$, a contradiction. Let 
\[\Sigma=\bigcup \Sigma_{n}.\] 

\begin{lemma}
We have $\supp \mu \subseteq \Sigma$. 
\end{lemma}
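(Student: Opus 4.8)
The goal is to show $\supp\mu\subseteq\Sigma$, where $\mu$ is the weak limit of $\mu_j = T_{j\#}\omega_\Omega^{x_0}/\omega_\Omega^{x_0}(B(\xi_0,r_j))$, each $\mu_j$ is supported on $\d\Omega_j = T_j(\d\Omega)$, and $\Sigma$ is the increasing union of the Hausdorff limits $\Sigma_n$ of $\d\Omega_j\cap\cnj{B(0,n)}$.

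The plan is to argue by contradiction. Suppose there is a point $x\in\supp\mu$ with $x\notin\Sigma$. Since $x\in\supp\mu\subseteq\bR^{d+1}$, pick $n$ large with $x\in B(0,n)$; then $x\notin\Sigma_n$ (as $\Sigma_n\subseteq\Sigma$), and since $\Sigma_n$ is compact, $\rho:=\dist(x,\Sigma_n)>0$. I want to show this forces $\mu(B(x,\rho/4))=0$, contradicting $x\in\supp\mu$. The key point is that for all sufficiently large $j$, the ball $B(x,\rho/2)$ is disjoint from $\d\Omega_j$. Indeed, if not, there is a subsequence with points $\zeta_j\in\d\Omega_j\cap B(x,\rho/2)$; shrinking $n$-balls appropriately (note $B(x,\rho/2)\subseteq B(0,n)$ for $n$ large enough, or enlarge $n$ first), these $\zeta_j$ lie in $\d\Omega_j\cap\cnj{B(0,n)}$, which Hausdorff-converges to $\Sigma_n$, so $\dist(\zeta_j,\Sigma_n)\to 0$, forcing a limit point of $(\zeta_j)$ to lie in $\Sigma_n\cap\cnj{B(x,\rho/2)}$ — but that set is empty since $\dist(x,\Sigma_n)=\rho$. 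Contradiction.

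Now for large $j$ the ball $B(x,\rho/2)$ lies in the open set $\bR^{d+1}\setminus\d\Omega_j$, hence entirely in $\Omega_j$ or entirely in $\Omega_j^{\ext}$ (connectedness of the ball). In either case $\omega_{\Omega_j}^{x_0\cdot}$ — wait, more directly: $\mu_j$ is a constant multiple of the pushforward of harmonic measure $\omega_\Omega^{x_0}$, which is supported on $\d\Omega$, so $\mu_j$ is supported on $\d\Omega_j$; thus $\mu_j(B(x,\rho/4))=0$ for all large $j$ (since $\cnj{B(x,\rho/4)}\subseteq B(x,\rho/2)$ is disjoint from $\d\Omega_j=\supp\mu_j$). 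By weak convergence and the portmanteau theorem (lower semicontinuity of mass on open sets), $\mu(B(x,\rho/4))\leq\liminf_j\mu_j(B(x,\rho/4))=0$. This contradicts $x\in\supp\mu$, which requires $\mu(B(x,r))>0$ for every $r>0$. Hence no such $x$ exists and $\supp\mu\subseteq\Sigma$.

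The only mild technical care needed is the bookkeeping with the radii $n$: one should fix $n$ large enough that $B(x,\rho)\subseteq B(0,n)$ first (note $\rho$ depends on the choice of $n$, so one iterates: choose $n_0$ with $x\in B(0,n_0)$, set $\rho_0 = \dist(x,\Sigma_{n_0})>0$, then choose $n\geq n_0$ with $B(x,\rho_0)\subseteq B(0,n)$ and use $\rho := \dist(x,\Sigma_n)\geq$ — hmm, $\Sigma_n\supseteq\Sigma_{n_0}$ so actually $\dist(x,\Sigma_n)\leq\rho_0$; the cleanest route is: since $x\notin\Sigma=\bigcup\Sigma_n$ and each $\Sigma_n$ is closed with $\Sigma_n\subseteq\Sigma_{n+1}$, for the specific $n$ with $B(x,1)\subseteq B(0,n)$, either $\dist(x,\Sigma_n)\geq$ some $\rho\in(0,1)$ works directly, or if $\dist(x,\Sigma_n)$ is tiny we still have it positive since $x\notin\Sigma_n$ and $\Sigma_n$ is closed — and then shrink $\rho$ so that $B(x,\rho)\subseteq B(0,n)$ automatically). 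This is routine; I do not expect any genuine obstacle. The essential content is simply that weak limits cannot create mass where the approximating measures have none, combined with the fact that $\d\Omega_j$ escapes every fixed ball disjoint from $\Sigma$.

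\begin{proof}
Suppose not, and let $x\in \supp\mu\backslash\Sigma$. Since $\Sigma=\bigcup_n\Sigma_n$, we have $x\not\in \Sigma_n$ for every $n$. Fix $n\in \bN$ large enough that $B(x,1)\subseteq B(0,n)$. As $\Sigma_n$ is closed and $x\not\in \Sigma_n$, the number $\rho:=\min\{1,\dist(x,\Sigma_n)\}$ is strictly positive, and $B(x,\rho)\subseteq B(0,n)$.

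We claim that $B(x,\rho/2)\cap \d\Omega_j=\emptyset$ for all sufficiently large $j$. If not, then along a subsequence there are points $\zeta_j\in \d\Omega_j\cap B(x,\rho/2)$. Since $B(x,\rho/2)\subseteq B(0,n)$, we have $\zeta_j\in \d\Omega_j\cap \cnj{B(0,n)}$, which converges in the Hausdorff metric to $\Sigma_n$; hence $\dist(\zeta_j,\Sigma_n)\to 0$. Passing to a further subsequence, $\zeta_j\to \zeta$ for some $\zeta\in \cnj{B(x,\rho/2)}$ with $\dist(\zeta,\Sigma_n)=0$, i.e. $\zeta\in \Sigma_n$ (as $\Sigma_n$ is closed). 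But then $\dist(x,\Sigma_n)\leq |x-\zeta|\leq \rho/2<\rho\leq \dist(x,\Sigma_n)$, a contradiction. This proves the claim.

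Now fix $j$ large enough that $B(x,\rho/2)\cap \d\Omega_j=\emptyset$. Since $\mu_j=T_{j\#}\omega_{\Omega}^{x_0}/\omega_{\Omega}^{x_0}(B(\xi_0,r_j))$ and $\omega_{\Omega}^{x_0}$ is supported on $\d\Omega$, the measure $\mu_j$ is supported on $T_j(\d\Omega)=\d\Omega_j$. As $\cnj{B(x,\rho/4)}\subseteq B(x,\rho/2)$ is disjoint from $\d\Omega_j=\supp\mu_j$, we get $\mu_j(B(x,\rho/4))=0$. Since this holds for all sufficiently large $j$, weak convergence of $\mu_j$ to $\mu$ and lower semicontinuity of mass on open sets give
\[
\mu(B(x,\rho/4))\leq \liminf_{j\rightarrow\infty}\mu_j(B(x,\rho/4))=0.
\]
This contradicts $x\in \supp\mu$, which forces $\mu(B(x,r))>0$ for every $r>0$. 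Hence $\supp\mu\subseteq\Sigma$.
\end{proof}
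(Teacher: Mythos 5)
Your proof is correct and is essentially the paper's argument run in the contrapositive direction: the paper shows that every ball $B$ centered on $\supp\mu$ has $\mu_j(B)>0$ for large $j$ and hence meets $\d\Omega_j$, whose limit points land in $\Sigma\cap\cnj{B}$, while you show that a point off $\Sigma$ has a neighborhood eventually disjoint from $\d\Omega_j$ and hence $\mu$-null. The two ingredients — Hausdorff convergence of $\d\Omega_j\cap\cnj{B(0,n)}$ to $\Sigma_n$ and lower semicontinuity of weak limits on open sets — are the same in both versions.
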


\begin{proof}
Suppose $B$ is a ball centered on $\supp \mu$. Then $\mu_{j}(B)>0$ for all $j$ large, hence $B\cap \d\Omega_{j}\neq 0$ for all $j$ large since $\supp \mu_{j}=\d\Omega_{j}$. If $\xi_{j}\in \d\Omega_{j}\cap B$, then there is a subsequence converging to a point $\xi\in \Sigma\cap \cnj{B}$. Thus, $\Sigma$ intersects the closure of any ball centered on $\supp \mu$, which implies $\supp \mu\subseteq \Sigma$. 
\end{proof}

\begin{lemma}
By passing to a subsequence, we may assume that for all $\xi\in \Sigma$ and $r>0$ there is a ball $B$ of radius $\frac{r}{8C}$ so that $2B\subseteq B(\xi,r)\cap \Omega_{j}$ for all large $j$.
\label{l:cork-in-limit}
\end{lemma}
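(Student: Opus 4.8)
The plan is to pass the interior corkscrew condition to the limit. First I would note that dilations preserve the $C$-uniform property, so each $\Omega_j=T_j(\Omega)$ is $C$-uniform, and in particular satisfies the $C$-interior corkscrew condition at every scale $\rho<\diam\partial\Omega_j=\diam\partial\Omega/r_j$; since $r_j\to0$ this upper bound is eventually irrelevant, so at any $\xi_j\in\partial\Omega_j$ we have an honest corkscrew ball of radius $\rho/C$ inside $\Omega_j\cap B(\xi_j,\rho)$. The whole difficulty is that we must produce a \emph{single} ball $B$ lying in $\Omega_j$ for \emph{all} large $j$, and do so \emph{simultaneously} for all pairs $(\xi,r)$ with $\xi\in\Sigma$; this is exactly where a diagonalization over a countable dense family enters, and that bookkeeping is the only real obstacle.

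Here is the construction. Fix a countable set $\{(\xi^{(k)},q^{(k)})\}_{k\in\bN}$ dense in $\Sigma\times(0,\infty)$, with each $q^{(k)}$ rational. For a fixed $k$, since $\xi^{(k)}\in\Sigma_n\subseteq\overline{B(0,n)}$ for some $n$ and $\partial\Omega_j\cap\overline{B(0,n)}\to\Sigma_n$ in the Hausdorff metric, I would pick $\xi_j^{(k)}\in\partial\Omega_j$ with $|\xi_j^{(k)}-\xi^{(k)}|\to0$, and then apply the corkscrew condition of $\Omega_j$ at $\xi_j^{(k)}$ at scale $q^{(k)}$ to obtain $y_j^{(k)}$ with $B(y_j^{(k)},q^{(k)}/C)\subseteq\Omega_j\cap B(\xi_j^{(k)},q^{(k)})$ for all large $j$. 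These centers lie in a fixed compact ball, so passing to a subsequence for each $k$ and diagonalizing, we may assume $y_j^{(k)}\to y^{(k)}$ for every $k$; this relabelled subsequence is the one in the statement.

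The payoff is the claim that $B(y^{(k)},q^{(k)}/(2C))\subseteq\Omega_j\cap B(\xi^{(k)},q^{(k)})$ for all large $j$: for the first inclusion, $\dist(y^{(k)},\partial\Omega_j)\ge q^{(k)}/C-|y^{(k)}-y_j^{(k)}|\ge q^{(k)}/(2C)$ once $j$ is large, while $y^{(k)}\in B(y_j^{(k)},q^{(k)}/C)\subseteq\Omega_j$, and a connected set that misses $\partial\Omega_j$ but meets $\Omega_j$ lies in $\Omega_j$; for the second, $B(y_j^{(k)},q^{(k)}/C)\subseteq B(\xi_j^{(k)},q^{(k)})$ forces $|y_j^{(k)}-\xi_j^{(k)}|\le q^{(k)}(1-1/C)$, and letting $j\to\infty$ gives $|y^{(k)}-\xi^{(k)}|\le q^{(k)}(1-1/C)$, whence $B(y^{(k)},q^{(k)}/(2C))\subseteq B(\xi^{(k)},q^{(k)})$. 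Finally, given an arbitrary $\xi\in\Sigma$ and $r>0$, by density I would choose $k$ with $q^{(k)}\in(r/2,r)$ and $|\xi^{(k)}-\xi|<r/(8C)$ and set $B:=B(y^{(k)},r/(8C))$: since $r/(4C)\le q^{(k)}/(2C)$ we get $2B\subseteq B(y^{(k)},q^{(k)}/(2C))\subseteq\Omega_j$ for all large $j$, and the triangle inequality together with $|y^{(k)}-\xi^{(k)}|\le q^{(k)}(1-1/C)\le r(1-1/C)$ and $C\ge1$ gives, for $z\in 2B$, $|z-\xi|<r/(4C)+r(1-1/C)+r/(8C)=r(1-5/(8C))<r$, so $2B\subseteq B(\xi,r)$ as well. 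Everything past the diagonalization is scale-invariance of the corkscrew condition plus these elementary distance estimates; one need only take minor care with degenerate situations, such as ensuring $q^{(k)}<\diam\partial\Omega_j$ for large $j$, which is automatic since $r_j\to0$.
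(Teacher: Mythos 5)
Your argument is correct and follows essentially the same route as the paper's proof: extract interior corkscrew balls of $\Omega_j$ near a countable dense family of pairs $(\xi,r)$, pass to a diagonal subsequence so the centers converge, and then conclude for arbitrary $(\xi,r)$ by density together with elementary distance estimates. The intermediate constants differ slightly from the paper's, but the final radius $\frac{r}{8C}$ and the required inclusions come out the same.
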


\begin{proof}

Let $A$ be a countably dense set in $\Sigma$. Let $\xi\in A$ and $r\in \bQ\cap (0,\infty)$. Then there is $\xi_{j}\in \Omega_{j}\cap B(\xi,r/2)$ for $j$ large enough. Since $\Omega_{j}$ satisfies the $C$-interior corkscrew condition, there is a ball
$B(x_{j},\frac{r}{2C})\subseteq B(\xi_{j},r/2)\cap \Omega_{j}$. By passing to a subsequence, we can find $x_{\xi,r}$ so that for large $j$
\[
B\ps{x_{\xi,r},\frac{2r}{5C}}\subseteq \Omega_{j}\cap B(\xi_{j},r/2)\subseteq \Omega_{j}\cap B(\xi,r).\]
By a diagonalization argument, we can assure that this holds for all $(\xi,r)\in A\times (0,r)\cap \bQ$ for sufficiently large $j$. By the density of $A$ and $(0,r)\cap \bQ$, it follows that for all $\xi\in \Sigma$ and $r>0$, there is $x_{\xi,r}$ so that $B(x_{\xi,r},\frac{r}{4C})\subseteq B(\xi,r)\cap\Omega_{j}$ for all $j$ large. By taking $B=B(x_{\xi,r},\frac{r}{8C})$, this proves the lemma.

\end{proof}

For all $x\in \bQ^{d+1}\backslash \Sigma$, let $r_{x}=\dist(x,\Sigma)$ so that $B(x,r_{x})\subseteq \Sigma^{c}$ and $B_{x}:=B(x,r_{x}/2)\subseteq (\d\Omega_{j})^{c}$ for all sufficiently large $j$. By a diagonalization argument, we may pass to a subsequence such that for each $x\in \bQ^{d+1}\backslash \Sigma$, $B_{x}\subseteq \Omega_{j}$ for all but finitely many $j$ or $B_{x}\subseteq \Omega_{j}^{\ext}$ for all but finitely many $j$. Let 
\begin{equation}
\Omega^{*}=\bigcup\{B_{x}: B_{x}\subseteq \Omega_{j} \mbox{ for all but finitely many }j\}.\
\label{e:omegastar}
\end{equation}

\begin{lemma}
$\Omega^{*}$ is a $C'$-uniform domain with constant depending on $C$, $\d\Omega^{*}=\Sigma$, and $\Omega^{*}$ satisfies the $8C$-interior corkscrew property.
\label{l:omegastaruniform}
\end{lemma}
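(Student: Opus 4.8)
The plan is to base everything on two soft consequences of the Hausdorff convergence $\partial\Omega_{j}\cap\overline{B(0,n)}\to\Sigma_{n}$ arranged just before the lemma: \textbf{(a)} any compact $K$ with $K\cap\Sigma=\emptyset$ is disjoint from $\partial\Omega_{j}$ for all large $j$ (pick $n$ with $K\subseteq B(0,n-1)$; then $\dist(K,\Sigma_{n})>0$ while $\partial\Omega_{j}\cap\overline{B(0,n)}$ is eventually in an arbitrarily small neighbourhood of $\Sigma_{n}$); and \textbf{(b)} every $\zeta\in\Sigma$ is a limit of points $\zeta_{j}\in\partial\Omega_{j}$ (it lies in some $\Sigma_{n}$, whose points are eventually within $o(1)$ of $\partial\Omega_{j}\cap\overline{B(0,n)}$). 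The step I would isolate first is the key claim: \emph{if $\dist(z,\Sigma)>0$ and $z\in\Omega_{j}$ for all but finitely many $j$, then $z\in\Omega^{*}$.} To prove it, set $\rho=\dist(z,\Sigma)$; by (a) the connected open set $B(z,\rho/2)$ misses $\partial\Omega_{j}$ for large $j$ and contains $z\in\Omega_{j}$, so $B(z,\rho/2)\subseteq\Omega_{j}$ for large $j$. Choosing a rational $x\in B(z,\rho/4)$ gives $r_{x}=\dist(x,\Sigma)>\rho/2$, hence $z\in B_{x}=B(x,r_{x}/2)$; by (a) again $B_{x}$ misses $\partial\Omega_{j}$ for large $j$ and contains $x\in\Omega_{j}$, so $B_{x}\subseteq\Omega_{j}$ for all but finitely many $j$, whence $B_{x}$ is one of the balls whose union defines $\Omega^{*}$ in \eqn{omegastar} and $z\in B_{x}\subseteq\Omega^{*}$.

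Granting this claim, I would next show $\partial\Omega^{*}=\Sigma$. Since every $B_{x}$ in \eqn{omegastar} lies in $\Sigma^{c}$ we have $\Omega^{*}\cap\Sigma=\emptyset$. For $\partial\Omega^{*}\subseteq\Sigma$, take $z$ with $\dist(z,\Sigma)=\rho>0$; for each large $j$ the connected set $B(z,\rho/2)$ lies wholly in $\Omega_{j}$ or wholly in $\Omega_{j}^{\ext}$, and testing a fixed rational $x_{0}\in B(z,\rho/4)$ against the dichotomy in \eqn{omegastar} shows $B(z,\rho/2)$ lies in $\Omega_{j}$ for cofinitely many $j$, or in $\Omega_{j}^{\ext}$ for cofinitely many $j$. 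In the first case the claim gives $z\in\Omega^{*}$; in the second, no point of $B(z,\rho/8)$ can lie in $\Omega^{*}$ (such a point would lie in some $\Omega_{j}$ for cofinitely many $j$, contradicting $B(z,\rho/2)\subseteq\Omega_{j}^{\ext}$), so $z\notin\overline{\Omega^{*}}$. Either way $z\notin\partial\Omega^{*}$. For $\Sigma\subseteq\partial\Omega^{*}$, fix $\xi\in\Sigma$; \Lemma{cork-in-limit} gives, for each $r>0$, a ball $B(y_{r},\tfrac r{8C})$ with $B(y_{r},\tfrac r{4C})\subseteq B(\xi,r)\cap\Omega_{j}$ for all large $j$, and (b) forces $\dist(y_{r},\Sigma)\geq\tfrac r{4C}$ (else $\partial\Omega_{j}$ would meet $B(y_{r},\tfrac r{4C})\subseteq\Omega_{j}$ for large $j$), so the claim gives $y_{r}\in\Omega^{*}$ with $|y_{r}-\xi|<r$; hence $\xi\in\overline{\Omega^{*}}\setminus\Omega^{*}=\partial\Omega^{*}$.

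The remaining assertions then follow quickly. For the $8C$-interior corkscrew property, given $\xi\in\partial\Omega^{*}=\Sigma$ and $r>0$, the ball $B(y_{r},\tfrac r{8C})$ above has every point at distance $\geq\tfrac r{8C}$ from $\Sigma$ and inside $\Omega_{j}$ for all large $j$, so the claim puts it inside $\Omega^{*}\cap B(\xi,r)$. For uniformity, given $p,q\in\Omega^{*}$, both lie in $\Omega_{j}$ for large $j$, and since $\Omega_{j}$ is $C$-uniform there are good curves $\gamma_{j}\subseteq\Omega_{j}$ for $p,q$; reparametrised on $[0,1]$ these are $C|p-q|$-Lipschitz and contained in $\overline{B(p,C|p-q|)}$, so an Arzel\`a--Ascoli subsequence converges uniformly to a curve $\gamma$ from $p$ to $q$ of length $\leq C|p-q|$. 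Using (b), the inequality $\dist(\gamma_{j}(s),\partial\Omega_{j})\geq\dist(\gamma_{j}(s),\{p,q\})/C$ passes to the limit as $\dist(\gamma(s),\Sigma)\geq\dist(\gamma(s),\{p,q\})/C$; when $\gamma(s)\neq p,q$ the left side is positive and $\gamma(s)$ lies in $\Omega_{j}$ for cofinitely many $j$, so the claim gives $\gamma(s)\in\Omega^{*}$. Thus $\gamma\subseteq\Omega^{*}$ is a good curve for $p,q$; in particular $\Omega^{*}$ is path-connected, hence a domain, and — extending the good-curve property to $\overline{\Omega^{*}}$ by approximating boundary endpoints by points of $\Omega^{*}$ (the same limiting argument) — $\Omega^{*}$ is $C'$-uniform with $C'$ depending on $C$ and $d$; alternatively one combines the corkscrew property with the Harnack chains read off the good curves and applies \Lemma{ahmnt}.

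\textbf{Expected main obstacle.} None of the estimates are hard; the care lies entirely in the two-sided bookkeeping: using the dichotomy in \eqn{omegastar} to certify that $\Omega^{*}$ is exactly the ``$\Omega_{j}$-side'' of $\Sigma$ (this is the claim and its role in proving $\partial\Omega^{*}\subseteq\Sigma$), and making sure the limiting good curve lands in $\Omega^{*}$ rather than only in $\overline{\Omega^{*}}$. Both are dispatched by the claim; everything else is routine passage to the limit through (a) and (b).
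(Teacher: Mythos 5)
Your proof is correct, and for the two easy parts (the identification $\partial\Omega^{*}=\Sigma$ and the $8C$-corkscrew property) it matches the paper's argument, except that you make explicit two things the paper leaves implicit: the ``covering argument'' showing that any point at positive distance from $\Sigma$ lying in $\Omega_{j}$ for cofinitely many $j$ belongs to $\Omega^{*}$ (your key claim), and the inclusion $\partial\Omega^{*}\subseteq\Sigma$, which the paper's proof never addresses even though it is part of the statement. Where you genuinely diverge is the uniformity step. The paper invokes Martin's theorem (Theorem \ref{t:martin}): for $x,y\in\Omega_{j}$ it produces $L$-bi-Lipschitz embeddings $f_{x,y,j}$ of $\overline{B(0,|x-y|)}$ into $\Omega_{j}$, extracts a uniform limit $f_{x,y}$, shows via a connectedness argument that the image of a slightly shrunk ball lies in $\Omega^{*}$, and then reads off good curves from the fact that bi-Lipschitz images of balls are uniform. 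You instead take Arzel\`a--Ascoli limits of the good curves themselves and verify the length and cigar conditions directly in the limit, using your key claim to certify that every non-endpoint of the limit curve lands in $\Omega^{*}$ and your identity $\partial\Omega^{*}=\Sigma$ to convert $\dist(\cdot,\Sigma)$ into $\dist(\cdot,\partial\Omega^{*})$. Your route is more elementary (no appeal to \cite{Mar85}) and arguably cleaner, but it depends on having established $\partial\Omega^{*}=\Sigma$ first — which is precisely why you prove that inclusion and the paper does not need to; the paper's route buys a slightly stronger structural fact (large bi-Lipschitz balls inside $\Omega^{*}$) at the cost of the external theorem. Both arguments share the same loose end of extending the good-curve condition from $x,y\in\Omega^{*}$ to $x,y\in\overline{\Omega^{*}}$ as Definition \ref{d:uniform} formally requires; you at least flag this, and the standard approximation you sketch (or the equivalence of Lemma \ref{l:ahmnt}) closes it.
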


\begin{proof}
By a covering argument, it follows that any ball $B$ with $B\subseteq \Omega_{j}$ for all $j$ large satisfies $B\subseteq \Omega^{*}$. By \Lemma{cork-in-limit}, for every $\xi\in \Sigma$ and $r>0$, there is a ball $B\subseteq B(\xi,r)\cap \Omega_{j}$ of radius $\frac{r}{8C}$ for $j$ large, and hence $B\subseteq \Omega^{*}$. Since this holds for all $\xi\in \Sigma$ and $r>0$, this implies $\Sigma\subseteq \cnj{\Omega^{*}}$. By construction, however, $\Omega^{*}\subseteq \Sigma^{c}$, and so $\Sigma\subseteq \d\Omega^{*}$. Hence, we have shown that $\Omega^{*}$ satisfies the interior corkscrew property with constant $8C$, and in particular, has nonempty interior.

%

Now we focus on uniformity, but to prove this, we will need the following theorem.

\begin{theorem} \label{t:martin} \cite[Theorem 5.1]{Mar85}
Let $\Omega$ be a uniform domain. Then there is a constant $L$, depending only on the uniformity constant for $\Omega$, such that for each pair of points $x,y\in \Omega$ there is an $L$-bi-Lipschitz embedding $f:\cnj{B(0,|x-y|)}\rightarrow \Omega$ such that $\{x,y\}\subseteq f(\cnj{B(0,|x-y|)})$. 
\end{theorem}

Let $x,y\in \bQ^{d+1}\cap \Omega^{*}$ and $r=|x-y|$. Then $x,y\in \Omega_{j}$ for all $j$ large, so there is an $L$-bi-Lipschitz $f_{x,y,j}:\cnj{B(0,r)}\rightarrow \Omega_{j}$ such that $\{x,y\}\subseteq f_{x,y,j}(\cnj{B(0,r)})$. By passing to a subsequence, we may assume $f_{x,y,j}$ converges uniformly to an $L$-bi-Lipschitz map $f_{x,y}:\cnj{B(0,r)}\rightarrow \bR^{d+1}$. If $\ve>0$ is small enough (depending on $x$ and $y$), we may assume that for each $j$ there is $B_{j}$ of radius $\frac{\min\{r_{x},r/2\}}{8L^{2}}$ so that $B_{j}\subseteq f_{x,y,j}(B(0,(1-\ve)r))\cap B_{x}$, see Figure \ref{f:jeeze}. 
\begin{figure}[h]
\includegraphics[width=350pt]{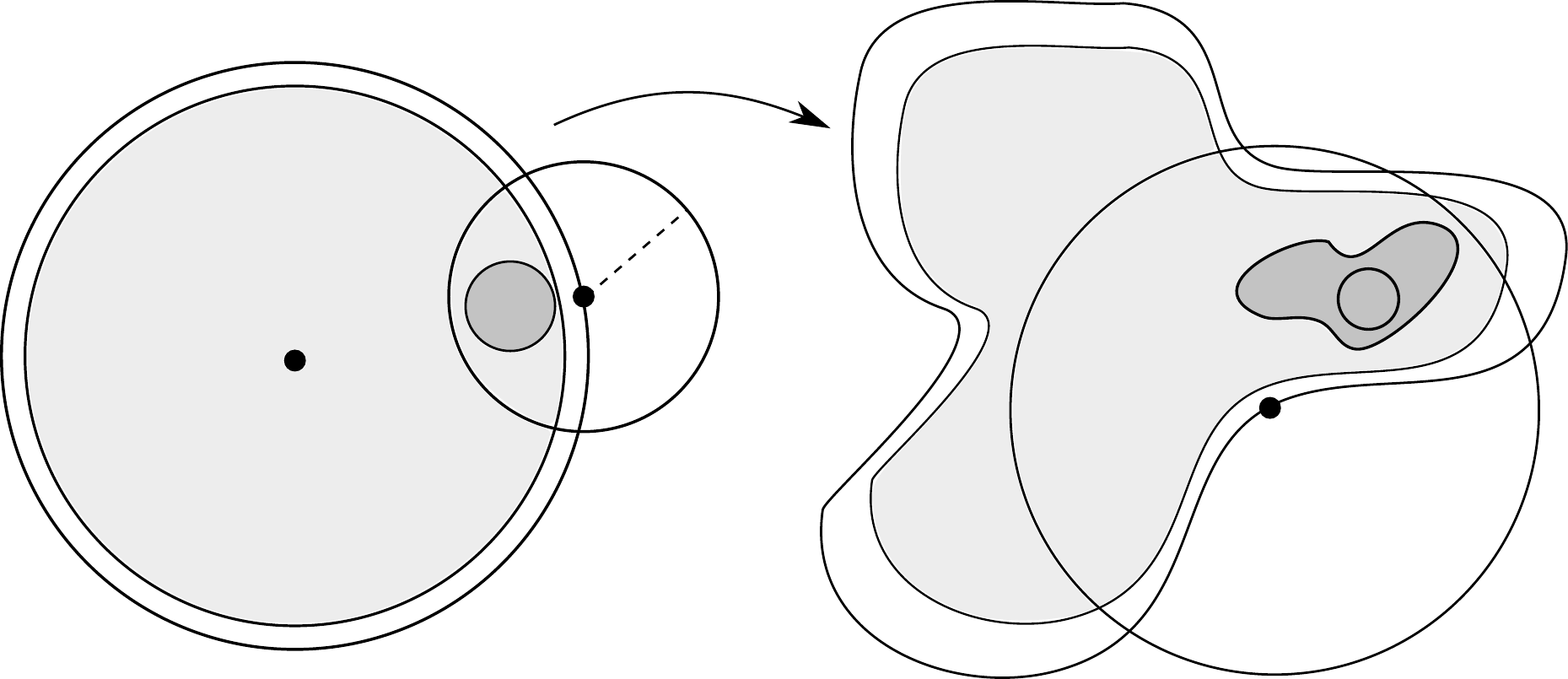}
\begin{picture}(0,0)(350,0)
\put(50,60){$0$}
\put(129,80){\tiny $f^{-1}(x)$}
\put(133,95){\tiny $r'$}
\put(103,81){\tiny $\frac{1}{2}B''$}
\put(275,10){$B_{x}$}
\put(275,135){\tiny $f_{x,y,j}(B(0,r))$}
\put(297,82){\tiny $B_{j}$}
\put(100,10){\tiny$ B(0,r)$}
\put(40,115){\tiny$ B(0,(1-\ve)r)$}
\put(140,135){\tiny $f_{x,y,j}$}
\end{picture}
\caption{Here, the outer ball on the left is $B(0,r)$, the shaded smaller ball is $B(0,(1-\ve)r)$, and the darker ball $\frac{1}{2}B''\subseteq B(0,(1-\ve)r)$ are on the left and their images are shown on the right. The image of $\frac{1}{2}B''$ is contained in $B_{x}$ and contains a smaller ball $B_{j}$ of desired radius.}
\label{f:jeeze}
\end{figure}

To see this, let $r'=\min\{r_{x},r/2\}$. Then there is $B_{j}''\subseteq B(0,r)\cap B(f_{x,y,j}^{-1}(x),r'/L)$ of radius $\frac{r'}{4L}$. Then $f(B_{j}'')\subseteq B(x,r')\subseteq B_{x}$. Now, if $\ve>0$ is small enough, $\frac{1}{2}B_{j}''\subseteq B(0,(1-\ve)r)$ and $f(\frac{1}{2}B'')$ contains a ball $B_{j}$ of radius $\frac{r'}{8L^{2}}$, which proves the claim.

By passing to a subsequence, we may assume there is a ball 
\begin{equation}
\label{e:ball-in-all}
B\subseteq  f_{x,y,j}(B(0,(1-\ve)r))\cap B_{x}\subseteq \Omega_{j}
\end{equation} 
for all $j$ large. Observe that 
\[\dist(f_{x,y,j}(\cnj{B(0,(1-\ve/2)r)}),\d\Omega_{j}) \geq \frac{\ve}{2L}r\]
 for all $j$ large, 
 \[\dist(f_{x,y}(\cnj{B(0,(1-\ve/2)r)}),\Sigma)\geq \frac{\ve}{2L}r.\] 
Hence, $f_{x,y}(\cnj{B(0,(1-\ve/2)r)}) \subseteq \Sigma^{c}$, and by uniform convergence, we have  $f_{x,y,j}(\cnj{B(0,(1-\ve)r)}) \subseteq \Sigma^{c}=\d\Omega^{*}$ for $j$ large. By \eqn{ball-in-all}, since $B\subseteq \Omega^{*}$ and since $f_{x,y,j}(\cnj{B(0,(1-\ve)r)})$ is connected, $f_{x,y,j}(\cnj{B(0,(1-\ve)r)})\subseteq \Omega^{*}$ for $j$ sufficiently large. Again, by uniform convergence, we also have $f_{x,y}(\cnj{B(0,(1-\ve)r)})\subseteq \Omega^{*}$. Letting $\ve\rightarrow 0$, we get $f_{x,y}(B(0,r))\subseteq \Omega^{*}$. Note that $x,y\in \cnj{ f_{x,y}(B(0,r))}$. 

Thus, for all $x,y\in \bQ^{d+1}\cap \Omega^{*}$, we can find an $L$-bi-Lipschitz map $f_{x,y}:\cnj{B(0,|x-y|)}\rightarrow \cnj{\Omega^{*}}$ containing $x,y$. By Arzela-Ascoli, we can find such a map for every $x,y\in \Omega^{*}$. Since balls are uniform domains and bi-Lipschitz maps preserve uniformity, we have that $f_{x,y}(B(0,|x-y|))$ is a uniform domain (with constant depending on $d$ and $L$, which in turn only depends on the uniformity constant of $\Omega$). Thus, we can find a path $\gamma$ satisfying the conditions of Definition \ref{d:uniform} for the domain $f_{x,y}(B(0,|x-y|))$, and it will also satisfy Definition \ref{d:uniform} for the domain $\Omega^{*}$. This shows that $\Omega^{*}$ is uniform.

\end{proof}

\begin{lemma}
For $\xi\in \d\Omega^{*}\backslash \supp \mu$, let $B(\xi)=B(\xi, \dist(\xi,\supp \mu)/2)$. Let $x_{\xi}$ be the center of the ball $B\subseteq B(\xi)\cap \Omega^{*}$ given by \Lemma{cork-in-limit}. Then $\omega_{j}^{x_{\xi}}(B(\xi))\rightarrow 0$, and in particular, $\delta B(\xi)\cap \Omega^{*,\ext}=\emptyset$, where $\delta$ is as in \Lemma{bourgain}.
\label{l:deltaBxi}
\end{lemma}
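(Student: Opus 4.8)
The plan is to prove the two assertions in turn: $\omega_j^{x_\xi}(B(\xi))\to 0$ by a change of pole, and then $\delta B(\xi)\cap\Omega^{*,\ext}=\emptyset$ by contradiction, feeding a putative exterior ball into \Lemma{bourgain}. Throughout write $r=r_{B(\xi)}=\dist(\xi,\supp\mu)/2$, set $p_j=T_j(x_0)$ and $c_j=\omega_\Omega^{x_0}(B(\xi_0,r_j))$, so that $T_{j\#}\omega_\Omega^{x_0}=\omega_j^{p_j}$ and $\mu_j=\omega_j^{p_j}/c_j$ (hence the normalization cancels in quotients $\omega_j^{p_j}(E)/\omega_j^{p_j}(F)=\mu_j(E)/\mu_j(F)$), and choose $\xi_j\in\d\Omega_j$ with $\xi_j\to\xi$ (these exist since $\xi\in\Sigma$). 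We may assume $\delta<1/3$.

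\emph{Step 1 (the limit).} Since $\cnj{B(\xi)}$ is compact and disjoint from $\supp\mu$, weak convergence gives $\mu_j(\cnj{B(\xi)})\to 0$. Fixing a closest point $z\in\supp\mu$ to $\xi$ and a small $\varepsilon>0$, the ball $B^\sharp_j:=B(\xi_j,\dist(\xi,\supp\mu)+\varepsilon)$ contains both $B(z,\varepsilon/4)$ and $\cnj{B(\xi)}$ for $j$ large, so $\liminf_j\mu_j(B^\sharp_j)\ge\mu(B(z,\varepsilon/4))>0$. As $p_j\to\infty$ and $x_\xi$ is a corkscrew point of $\Omega_j$ for $B(\xi)$ (so $\dist(x_\xi,\d\Omega_j)\gtrsim_C r\gtrsim_C r_{B^\sharp_j}$), both poles lie at distance $\gtrsim_C r_{B^\sharp_j}$ from $B^\sharp_j\cap\d\Omega_j$; hence \Lemma{w/w} applied to $\Omega_j$, $B^\sharp_j$ and the poles $p_j,x_\xi$ with $E=\cnj{B(\xi)}\cap\d\Omega_j$ yields
\[
\omega_j^{x_\xi}(\cnj{B(\xi)})\ \le\ \omega_j^{x_\xi}(B^\sharp_j)\,\frac{\omega_j^{x_\xi}(\cnj{B(\xi)})}{\omega_j^{x_\xi}(B^\sharp_j)}\ \lesssim_C\ \frac{\omega_j^{p_j}(\cnj{B(\xi)})}{\omega_j^{p_j}(B^\sharp_j)}\ =\ \frac{\mu_j(\cnj{B(\xi)})}{\mu_j(B^\sharp_j)}\ \longrightarrow\ 0 ,
\]
which gives both $\omega_j^{x_\xi}(B(\xi))\to 0$ and $\omega_j^{x_\xi}(\cnj{B(\xi)})\to 0$.

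\emph{Step 2 (a large relative content from a putative exterior ball).} Suppose $y\in\delta B(\xi)\cap\Omega^{*,\ext}$; then $s_0:=\dist(y,\Sigma)/2>0$ and $s_0<\delta r/2$. Since $\cnj{B(y,2s_0)}$ is a compact subset of $\Sigma^c$, the diagonalization defining $\Omega^*$ forces $B(y,\rho_0^{(j)})\subseteq\Omega_j^{\ext}$ for $j$ large (otherwise $y$ would lie in $\cnj{\Omega^*}$), where $\rho_0^{(j)}:=\dist(y,\d\Omega_j)\to\dist(y,\Sigma)=2s_0$. Pick $\eta_j\in\d\Omega_j$ with $|\eta_j-y|=\rho_0^{(j)}$, so $|\eta_j-\xi|<2\delta r$ for $j$ large. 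Combining, for each scale $t\le\rho_0^{(j)}$, an interior corkscrew ball of $\Omega_j$ at $\eta_j$ with the exterior ball $B(y,\rho_0^{(j)})$ and projecting radially from the former onto a sphere about it (exactly the argument used in the discussion after \CorollaryIV) gives $\cH^d_\infty(\d\Omega_j\cap B(\eta_j,2t))\gtrsim_C t^d$. Thus, with $\hat B_j:=B(\eta_j,\rho_0^{(j)})$, \Lemma{bourgain} (with $s=d>d-1$) applies and gives $\omega_{\Omega_j\cap\hat B_j}^{x_{\hat B_j}}(\hat B_j)\gtrsim_C 1$ for $x_{\hat B_j}$ a corkscrew point of $\Omega_j$ at $\eta_j$ of scale $\delta\rho_0^{(j)}$, hence $\omega_j^{x_{\hat B_j}}(\cnj{\hat B_j})\gtrsim_C 1$ by the maximum principle; note $\cnj{\hat B_j}\subseteq\cnj{B(\xi,3\delta r)}\subseteq B(\xi)$.

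\emph{Step 3 (transfer of pole, and contradiction).} The crucial point is that $\rho_0^{(j)}\to 2s_0>0$: although $x_{\hat B_j}$ and $x_\xi$ live at the genuinely different scales $\delta\rho_0^{(j)}\sim\delta s_0$ and $r$, we have $\dist(x_{\hat B_j},\d\Omega_j),\dist(x_\xi,\d\Omega_j)\ge\varepsilon_0$ and $|x_{\hat B_j}-x_\xi|\le 4r$, with $\varepsilon_0>0$ and $\Lambda_0:=4r/\varepsilon_0$ depending only on $C,d,\delta,s_0,r$ but \emph{not} on $j$. By the Harnack chain condition for the $C$-uniform domain $\Omega_j$ (\Lemma{ahmnt}) there is a chain of at most $N(\Lambda_0)$ balls joining $x_{\hat B_j}$ to $x_\xi$, so iterating Harnack's inequality for the positive harmonic function $x\mapsto\omega_j^x(\cnj{\hat B_j})$ gives $\omega_j^{x_\xi}(\cnj{\hat B_j})\gtrsim_{C,d,N(\Lambda_0)}\omega_j^{x_{\hat B_j}}(\cnj{\hat B_j})\gtrsim_C 1$, a bound uniform in $j$. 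Since $\cnj{\hat B_j}\subseteq\cnj{B(\xi)}$ this contradicts $\omega_j^{x_\xi}(\cnj{B(\xi)})\to 0$ from Step 1, so no such $y$ exists. I expect Step 3 to be the main obstacle: \Lemma{w/w} only compares \emph{ratios} of harmonic measures and so cannot produce the absolute lower bound we need, whereas the Harnack chain does — the subtle point being that the a priori unquantified chain length $N(\Lambda_0)$ is nevertheless a fixed finite number because the scale ratio $r/\rho_0^{(j)}$ stays bounded along the sequence.
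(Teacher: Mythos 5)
Your proof is correct and follows essentially the same route as the paper: a change of pole via Lemma \ref{l:w/w} reduces $\omega_j^{x_\xi}(B(\xi))$ to a ratio of $\mu_j$-masses that tends to $0$ by weak convergence, and a putative exterior point yields interior-plus-exterior balls, hence a Hausdorff-content lower bound feeding Lemma \ref{l:bourgain} and a contradiction. Your Step 3 (Harnack-chaining from the corkscrew point of the small ball $\hat B_j$ up to $x_\xi$) is a slightly more careful handling of the pole location than the paper's direct application of Bourgain's lemma at the scale of $B(\xi)$, but it is not a different method.
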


\begin{proof} 
Let $R\geq 1$ be so that $R\bB\supseteq B(\xi)$. Let $B'$ be the ball from \Lemma{cork-in-limit} applied to $R\bB$ and $x^{B'}$ its center. Since $\Omega_{j}$ is uniform, we may apply \Lemma{w/w} and get for $j$ large
\begin{align}
\omega_{j}^{x_{\xi}}( B(\xi))& 
\stackrel{\eqn{harnack}}{\sim_{r_{B'},R}} \omega_{j}^{x^{B'}}(B(\xi))
\leq \frac{\omega_{j}^{x^{B'}}(B(\xi))}{\omega_{j}^{x^{B'}}(R\bB)}
 \stackrel{\eqn{w/w}}{\sim} \frac{\omega_{j}^{T_{j}(x_{0})}(B(\xi))}{\omega_{j}^{T_{j}(x_{0})}(R\bB)} \notag \\
&
 \leq \frac{\omega_{j}^{T_{j}(x_{0})}(B(\xi))}{\omega_{j}^{T_{j}(x_{0})}(\bB)}
=\mu_{j}(B(\xi)).
\label{e:w<mu}
\end{align}

Thus, since $2B(\xi)\cap \supp \mu=\emptyset$, 
\[\limsup_{j\rightarrow\infty}\omega_{j}^{x_{\xi}}( B(\xi))
\stackrel{\eqn{w<mu}}{\lec}
\limsup_{j\rightarrow\infty}\mu_{j}(B(\xi))\leq \mu(2B(\xi))=0\]

Now, suppose $\delta B(\xi) \cap \Omega^{*,\ext}\neq\emptyset$. Then there is a ball $B''\subseteq \delta B(\xi) \cap \Omega_{*}^{\ext}$ with rational center so that $B''\subseteq\delta  B(\xi) \cap\Omega_{j}^{\ext}$ for all large $j$. Since we also have $B\subseteq \delta B(\xi) \cap \Omega_{j}$, this implies $\cH^{d}_{\infty}(\delta B(\xi) \cap\d\Omega_{j})\gec_{r_{B},r_{B''}} r_{\delta B(\xi)}^{d}$ for all $j$, where the implied constant depends on $r_{B}$ and $r_{B'}$ (the proof for Hausdorff measure is shown in \cite[Lemma 2.3]{Bad12}, but the same proof works for Hausdorff content) and hence by \Lemma{bourgain},
\begin{equation}
\omega_{j}^{x}( B(\xi))\gec_{r_{B},r_{B'}} 1 \mbox{ for all }x\in \delta B(\xi).
\label{e:bourgain-on-omegaj}
\end{equation}
So in particular, this holds for $x=x_{\xi}$, but that would contradict the first half of this theorem. Thus, $\delta B(\xi)\cap\Omega^{*,\ext}=\emptyset$.
\end{proof}

\begin{lemma} 
Let 
\[\tilde{\Omega}=\Omega^{*}\cup \bigcup\ck{\frac{\delta}{2} B(\xi):\xi\in \d\Omega^{*}\backslash \supp \mu}.\]
Then $\tilde{\Omega}$ is also a uniform domain and $\d\tilde{\Omega}=\supp \mu$.
\end{lemma}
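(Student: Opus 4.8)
The first step is to identify $\tilde\Omega$ explicitly. By \Lemma{deltaBxi} each glued ball satisfies $\tfrac{\delta}{2}B(\xi)\subseteq\delta B(\xi)\subseteq\cnj{\Omega^{*}}$, and since $B(\xi)=B(\xi,\dist(\xi,\supp\mu)/2)$ we have $\tfrac{\delta}{2}B(\xi)\cap\supp\mu=\emptyset$; as $\supp\mu\subseteq\Sigma=\d\Omega^{*}$ this gives $\tilde\Omega\subseteq\cnj{\Omega^{*}}\setminus\supp\mu$, while the reverse inclusion is immediate since $\Omega^{*}\subseteq\tilde\Omega$ and each $\xi\in\d\Omega^{*}\setminus\supp\mu$ is the centre of its own ball $\tfrac{\delta}{2}B(\xi)\subseteq\tilde\Omega$. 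Hence
\[\tilde\Omega=\cnj{\Omega^{*}}\setminus\supp\mu=\Omega^{*}\cup(\d\Omega^{*}\setminus\supp\mu).\]
From this description $\tilde\Omega$ is open (it is exhibited as a union of $\Omega^{*}$ with open balls), it is connected (each $\tfrac{\delta}{2}B(\xi)$ meets $\Omega^{*}$, since $\d\Omega^{*}$ has empty interior so $\Omega^{*}$ is dense in $\delta B(\xi)\subseteq\cnj{\Omega^{*}}$), and $\d\tilde\Omega=\supp\mu$: the identity shows $\supp\mu\subseteq\cnj{\Omega^{*}}=\cnj{\tilde\Omega}$ is disjoint from $\tilde\Omega$, and conversely any point of $\d\tilde\Omega$ lies in $\cnj{\Omega^{*}}$, is not in $\Omega^{*}$, and is not a centre $\xi\in\d\Omega^{*}\setminus\supp\mu$ (which would put it in the open set $\tfrac{\delta}{2}B(\xi)\subseteq\tilde\Omega$), hence lies in $\supp\mu$.

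Next I record two quantitative facts. Writing $\rho_{\xi}:=\dist(\xi,\supp\mu)/2=r_{B(\xi)}$ for a centre $\xi$: if $t\in\tfrac{\delta}{2}B(\xi)$ then $|t-\xi|<\tfrac{\delta}{2}\rho_{\xi}<\tfrac12\rho_{\xi}$, so, since $\d\tilde\Omega=\supp\mu$, we get $\rho_{\xi}<\dist(t,\d\tilde\Omega)<\tfrac52\rho_{\xi}$; thus $\tfrac{\delta}{2}B(\xi)$ behaves like a Whitney ball of $\tilde\Omega$. Secondly, applying \Lemma{cork-in-limit} to $\xi\in\Sigma$ at scale $r=\tfrac{\delta}{2}\rho_{\xi}$ together with the covering argument from the proof of \Lemma{omegastaruniform}, there is a ball $B_{\xi}\subseteq\tfrac{\delta}{2}B(\xi)$ with $B_{\xi}\subseteq\Omega^{*}$, radius $\tfrac{\delta\rho_{\xi}}{16C}$, and $\dist(B_{\xi},\d\Omega^{*})\geq\tfrac{\delta\rho_{\xi}}{16C}$. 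For $z\in\tilde\Omega$ set $z^{*}:=z$ if $z\in\Omega^{*}$, and $z^{*}:=$ the centre of $B_{\xi_{z}}$ for a chosen centre $\xi_{z}$ with $z\in\tfrac{\delta}{2}B(\xi_{z})\setminus\Omega^{*}$ otherwise; then $z^{*}\in\Omega^{*}$, the segment $[z,z^{*}]$ lies in $\tfrac{\delta}{2}B(\xi_{z})\subseteq\tilde\Omega$ with $\dist(\cdot,\d\tilde\Omega)>\rho_{\xi_{z}}\geq|z-z^{*}|$ along it, $|z-z^{*}|<\dist(z,\d\tilde\Omega)$, and $\dist(z^{*},\d\tilde\Omega)\geq\dist(z^{*},\d\Omega^{*})\gtrsim_{\delta,C}\dist(z,\d\tilde\Omega)$.

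Now for uniformity. Since $\d\tilde\Omega=\supp\mu\subseteq\d\Omega^{*}$, every $t\in\Omega^{*}$ has $\dist(t,\d\tilde\Omega)\geq\dist(t,\d\Omega^{*})$, so a good curve for two points of $\Omega^{*}$ in the $C'$-uniform domain $\Omega^{*}$ (\Lemma{omegastaruniform}) is automatically good in $\tilde\Omega$, and likewise the interior corkscrew condition passes to $\tilde\Omega$ (for $\xi\in\supp\mu\subseteq\d\Omega^{*}$ the $\Omega^{*}$-corkscrew ball in $B(\xi,r)$ sits in $\tilde\Omega$ with centre at distance $\geq r/(8C)$ from $\d\tilde\Omega$). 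Given $x,y\in\tilde\Omega$: if $|x-y|<\tfrac12\max\{\dist(x,\d\tilde\Omega),\dist(y,\d\tilde\Omega)\}$, the segment $[x,y]\subseteq\tilde\Omega$ is a good curve with constant $1$. Otherwise $\dist(x,\d\tilde\Omega),\dist(y,\d\tilde\Omega)\leq 2|x-y|$, and I take $\gamma=[x,x^{*}]\cup\gamma^{*}\cup[y^{*},y]$ with $\gamma^{*}\subseteq\Omega^{*}$ a good curve for $x^{*},y^{*}$ (trivial end segments being dropped when $x$ or $y\in\Omega^{*}$). Using $|x-x^{*}|,|y-y^{*}|<\dist(\cdot,\d\tilde\Omega)\leq 2|x-y|$ one sees its length is at most $(4+5C')|x-y|$, and $\dist(t,\d\tilde\Omega)\gtrsim_{C'}\dist(t,\{x,y\})$ is checked piecewise: on $[x,x^{*}]$ and $[y^{*},y]$ trivially, since $\dist(t,\d\tilde\Omega)>\rho_{\xi_{x}}>|t-x|\geq\dist(t,\{x,y\})$; and on $\gamma^{*}$, writing $\dist(t,\{x^{*},y^{*}\})=\dist(t,x^{*})$ (say), by splitting on whether $\dist(t,x^{*})\leq\tfrac14\dist(x^{*},\d\tilde\Omega)$ — in the first case using $\dist(t,\d\tilde\Omega)\geq\tfrac34\dist(x^{*},\d\tilde\Omega)$ and $\dist(t,\{x,y\})\leq\dist(t,x^{*})+|x-x^{*}|\leq\tfrac54\dist(x^{*},\d\tilde\Omega)$, in the second using $|x-x^{*}|<\dist(x^{*},\d\tilde\Omega)<4\dist(t,x^{*})$ together with $\dist(t,\d\tilde\Omega)\geq\dist(t,\d\Omega^{*})\geq\dist(t,x^{*})/C'$. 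This yields a good curve with a constant $C''$ depending only on $C'$, hence on $d$ and $C$, so $\tilde\Omega$ is $C''$-uniform; the case $x$ or $y\in\supp\mu$ follows by approximating through points of $\Omega^{*}$ and an Arzel\`a--Ascoli argument (equivalently, via \Lemma{ahmnt}).

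The main obstacle is the constant bookkeeping in the ``otherwise'' case of the last paragraph: one must control how the detour $z\mapsto z^{*}$ distorts $\dist(t,\{x,y\})$ relative to $\dist(t,\d\tilde\Omega)$ along $\gamma^{*}$. The two structural inputs that make this go through are \Lemma{deltaBxi} — which guarantees the glued balls lie inside $\cnj{\Omega^{*}}$, so boundary distances only grow on passing from $\Omega^{*}$ to $\tilde\Omega$ and each glued ball is Whitney — and \Lemma{cork-in-limit} — which supplies, inside each glued ball, a corkscrew ball of $\Omega^{*}$ of radius comparable to the ball's own radius, providing the ``escape point'' $z^{*}$. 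Beyond this there is no genuinely hard step: once the identity $\tilde\Omega=\cnj{\Omega^{*}}\setminus\supp\mu$ is established, everything reduces to properties of $\Omega^{*}$ already proved.
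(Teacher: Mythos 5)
Your proof is correct, and both it and the paper's rest on the same structural input, \Lemma{deltaBxi}, which forces $\frac{\delta}{2}B(\xi)\subseteq\delta B(\xi)\subseteq\cnj{\Omega^{*}}$; but the two arguments are organized differently. For the boundary identification your route is cleaner: you first isolate the identity $\tilde{\Omega}=\cnj{\Omega^{*}}\setminus\supp\mu$, from which $\cnj{\tilde{\Omega}}=\cnj{\Omega^{*}}$ and $\d\tilde{\Omega}=\cnj{\Omega^{*}}\setminus\tilde{\Omega}=\supp\mu$ follow by pure set topology, whereas the paper argues by contradiction with a small metric computation (a point of $\d\tilde{\Omega}\setminus\supp\mu$ would be close to some center $\xi$, forcing $\delta B(\xi)\cap\Omega^{*,\ext}\neq\emptyset$). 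For uniformity, however, you do considerably more work than necessary: since every point of $\tilde{\Omega}$ lies in $\cnj{\Omega^{*}}$ by your own identity, and Definition \ref{d:uniform} already demands good curves for all pairs $x,y\in\cnj{\Omega^{*}}$ (not merely $x,y\in\Omega^{*}$), \Lemma{omegastaruniform} directly hands you a good curve $\gamma\subseteq\Omega^{*}\subseteq\tilde{\Omega}$ for any $x,y\in\tilde{\Omega}$, and it remains good for $\tilde{\Omega}$ because $\d\tilde{\Omega}=\supp\mu\subseteq\Sigma=\d\Omega^{*}$ gives $\dist(t,\d\tilde{\Omega})\geq\dist(t,\d\Omega^{*})$. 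This one-line transfer is exactly the paper's argument; your explicit detour through the escape points $z^{*}$ supplied by \Lemma{cork-in-limit} rebuilds by hand, with the constant bookkeeping you flag, a curve family that the closure form of the definition already provides. Your construction would be the right tool if uniformity had been defined only for interior pairs, but here it is redundant; everything in it is nonetheless sound.
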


\begin{proof}
Since $\Omega^{*}$ is uniform, we know that for all $x,y\in \cnj{\Omega^{*}}$ there is a good curve $\gamma$. But $\cnj{\Omega^{*}}\cap \frac{\delta}{2}  B(\xi)=\frac{\delta}{2}  B(\xi)$ for any $\xi \in \d\Omega^{*}\backslash \supp \mu$, and thus for any pair of points $x,y \in \tilde{\Omega}$ there is a good curve for $x$ and $y$ {\it with respect to} $\Omega^{*}$, and this curve will also be good for $\tilde{\Omega}$. Since there are good curves for all pairs in $\tilde{\Omega}$, it is not hard to show that there are good curves between any pair of points in its closure.

Clearly $\supp \mu\subseteq \cnj{\tilde{\Omega}}$. Moreover, $\supp\mu\subseteq \Sigma$ implies $\supp \mu\cap \Omega^{*}=\emptyset$, and by definition $\supp\mu\cap \frac{\delta}{2} B(\xi)=\emptyset$ for all $\xi\in \d\Omega^{*}\backslash \supp\mu$, hence $\supp \mu\subseteq \d\tilde{\Omega}$.  Now suppose there is $\zeta\in \d\tilde{\Omega}\backslash \supp \mu$. Since $\Omega^{*}\subseteq \tilde{\Omega}$, $\zeta\not\in \Omega^{*}$. Since $\zeta\not\in \frac{\delta}{2}  B(\xi)\cup \supp \mu$ for any $\xi\in \d\Omega^{*}\backslash\supp \mu$, we also know $\zeta\not\in \d\Omega^{*}$, and so $\zeta\in \Omega^{*\ext}$. In particular, 
\[
\zeta\in \cnj{\bigcup_{\xi\in \d\Omega^{*}\backslash\supp \mu}\frac{\delta}{2}B(\xi)}\]
Thus  there is $\xi\in \d\Omega^{*}\backslash \supp \mu$ such that $\dist(\zeta,\frac{\delta}{2}B(\xi))<\frac{\delta}{8}\dist(\zeta,\supp\mu)$, and so
\begin{align*}
|\zeta-\xi|
& \leq r_{\frac{\delta}{2}B(\xi)}+\dist(\zeta,\frac{\delta}{2}B(\xi))
 < \frac{\delta}{4}\dist(\xi,\supp\mu)+ \frac{\delta}{8}\dist(\zeta,\supp\mu) \\
& < \frac{3\delta}{8}\dist(\xi,\supp\mu)+\frac{\delta}{8}|\zeta-\xi|
\end{align*}
which implies
\[|\zeta-\xi|<(1-\delta/8)^{-1} \frac{3\delta}{8}\dist(\xi,\supp \mu) <\delta \dist(\xi,\supp \mu)/2 \]
for $\delta$ small enough. But then $\delta B\cap \Omega^{*,\ext}\neq\emptyset$, which contradicts \Lemma{deltaBxi}.

\end{proof}

\begin{lemma}
By passing to a subsequence, for any $f\in C_{0}(\bR^{d+1})$, the function
\[u_{f,j}(x)=\int fd\omega_{j}^{x}\]
converges to a harmonic function $v_{f}$ on $\Omega^{*}$ in the sense that for all compact subsets $K\subseteq \Omega^{*}$, $K\subseteq \Omega_{j}$ for $j$ sufficiently large and $u_{f,j}$ converges uniformly to $v_{f}$ on $K$. In particular, 
\begin{equation}
v_{f}=v_{g}\mbox{ for all }f,g\in C_{0}(\bR^{d+1}) \mbox{ such that }f\one_{\d\tilde{\Omega}}=g\one_{\d\tilde{\Omega}}.
\label{e:vf}
\end{equation}
\label{l:uf}
\end{lemma}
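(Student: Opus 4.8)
The plan is to extract the subsequence in three stages: first for a single $f$, then uniformly over all of $C_{0}(\bR^{d+1})$ using separability, and finally to deduce \eqn{vf} from \Lemma{deltaBxi}. \emph{A single $f$.} The function $u_{f,j}(x)=\int f\,d\omega_{j}^{x}$ is harmonic in $\Omega_{j}$ with $\|u_{f,j}\|_{\infty}\le\|f\|_{\infty}$. If $K\subseteq\Omega^{*}$ is compact then, since by \eqn{omegastar} $\Omega^{*}$ is a union of the balls $B_{x}$, compactness gives a finite subcover $K\subseteq B_{x_{1}}\cup\cdots\cup B_{x_{m}}$; as each $B_{x_{i}}\subseteq\Omega_{j}$ for all but finitely many $j$, we get $K\subseteq\Omega_{j}$ for all large $j$, and for such $j$ the open set $B_{x_{1}}\cup\cdots\cup B_{x_{m}}$ is disjoint from $\d\Omega_{j}$, whence $\dist(K,\d\Omega_{j})\ge\rho_{K}>0$ with $\rho_{K}$ independent of $j$. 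Interior estimates for harmonic functions then bound $u_{f,j}$ and $\nabla u_{f,j}$ on $K$ uniformly in the large $j$, so $\{u_{f,j}\}$ is equibounded and equicontinuous on $K$. Exhausting $\Omega^{*}$ by compact sets and combining Arzel\`a--Ascoli with a diagonal extraction produces a subsequence along which $u_{f,j}$ converges, uniformly on compact subsets of $\Omega^{*}$, to a function $v_{f}$, which is harmonic as a locally uniform limit of harmonic functions.

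\emph{All $f$ at once.} Since $C_{0}(\bR^{d+1})$ is separable, fix a countable dense subset $\{f_{k}\}$ and diagonalize the subsequences above over $k$ to obtain one subsequence along which $u_{f_{k},j}$ converges locally uniformly on $\Omega^{*}$ for every $k$. For arbitrary $f$, choosing $f_{k}$ with $\|f-f_{k}\|_{\infty}$ small and using $\|u_{f,j}-u_{f_{k},j}\|_{\infty}\le\|f-f_{k}\|_{\infty}$ shows that $\{u_{f,j}\}$ is locally uniformly Cauchy on $\Omega^{*}$; its limit $v_{f}$ is harmonic, depends linearly on $f$, and satisfies $\|v_{f}-v_{g}\|_{L^{\infty}(\Omega^{*})}\le\|f-g\|_{\infty}$. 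This establishes everything except \eqn{vf}.

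\emph{Proof of \eqn{vf}.} By linearity it suffices to prove $v_{h}\equiv 0$ whenever $h\in C_{0}(\bR^{d+1})$ vanishes on $\d\tilde\Omega=\supp\mu$. Fix $x\in\Omega^{*}$ and $\varepsilon>0$, and choose $R_{0}>0$ and an open $U\supseteq\supp\mu$ with $|h|<\varepsilon$ on $U\cup B(0,R_{0})^{c}$. Then, $\omega_{j}^{x}$ being a probability measure,
\[
|u_{h,j}(x)|\ \le\ \varepsilon+\|h\|_{\infty}\,\omega_{j}^{x}\big((\d\Omega_{j}\cap\cnj{B(0,R_{0})})\setminus U\big).
\]
The set $Z=(\Sigma\cap\cnj{B(0,R_{0})})\setminus U$ is compact, and since $\Sigma=\d\Omega^{*}$ and $\supp\mu\subseteq U$ it lies in $\d\Omega^{*}\setminus\supp\mu$; cover it by finitely many balls $\tfrac12 B(\zeta_{1}),\dots,\tfrac12 B(\zeta_{N})$ with $\zeta_{i}\in\d\Omega^{*}\setminus\supp\mu$. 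The Hausdorff convergence $\d\Omega_{j}\cap\cnj{B(0,n)}\to\Sigma_{n}$ forces $(\d\Omega_{j}\cap\cnj{B(0,R_{0})})\setminus U\subseteq\bigcup_{i}\tfrac12 B(\zeta_{i})$ for all large $j$, so it suffices to show $\omega_{j}^{x}(\tfrac12 B(\zeta_{i}))\to 0$ for each $i$. Let $x_{\zeta_{i}}$ be the corkscrew point of $B(\zeta_{i})$ from \Lemma{deltaBxi}: then $\dist(x_{\zeta_{i}},\d\Omega_{j})\gtrsim r_{B(\zeta_{i})}$ for large $j$ by \Lemma{cork-in-limit}, while $\dist(x,\d\Omega_{j})$ is bounded below by a positive constant for large $j$ (as in the first stage), so $|x-x_{\zeta_{i}}|/\dist(\{x,x_{\zeta_{i}}\},\d\Omega_{j})$ is bounded uniformly in $j$. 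As every $\Omega_{j}=T_{j}(\Omega)$ is $C$-uniform with the same constant, applying \eqn{harnack} to the positive harmonic function $z\mapsto\omega_{j}^{z}(\tfrac12 B(\zeta_{i}))$ gives
\[
\omega_{j}^{x}\big(\tfrac12 B(\zeta_{i})\big)\ \sim\ \omega_{j}^{x_{\zeta_{i}}}\big(\tfrac12 B(\zeta_{i})\big)\ \le\ \omega_{j}^{x_{\zeta_{i}}}(B(\zeta_{i}))\ \longrightarrow\ 0,
\]
with implicit constant independent of $j$, by \Lemma{deltaBxi}. Summing over $i$ and letting $\varepsilon\to 0$ yields $v_{h}(x)=0$, hence $v_{f}=v_{g}$ whenever $f\one_{\d\tilde\Omega}=g\one_{\d\tilde\Omega}$.

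The delicate point is the last stage. Since $\tilde\Omega$ is in general strictly larger than $\Omega^{*}$, the boundaries $\d\Omega_{j}$ carry a ``spurious'' portion accumulating on $\Sigma\setminus\supp\mu$ rather than on $\d\tilde\Omega$, and one must show this portion is invisible to harmonic measure seen from interior points of $\Omega^{*}$. This is exactly the content of \Lemma{deltaBxi}; the remaining work is the Hausdorff-covering argument and the transfer of the decay of $\omega_{j}^{x_{\zeta_{i}}}(B(\zeta_{i}))$ to an arbitrary interior point $x$ via Harnack chains whose constants do not degenerate as $j\to\infty$.
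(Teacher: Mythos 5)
Your proof is correct and follows essentially the same route as the paper's: separability of $C_{0}(\bR^{d+1})$ plus normal families and diagonalization to produce $v_{f}$, and then Lemma \ref{l:deltaBxi} combined with Harnack chains in the $\Omega_{j}$ (which are uniform with $j$-independent constants) to kill the contribution of $\d\Omega^{*}\setminus\supp\mu$ and deduce \eqref{e:vf}. If anything, your third stage is more carefully executed than the paper's, which reduces to individual balls $B(\xi)$ and leaves the compactness/covering step implicit.
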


\begin{proof}
The set of continuous functions vanishing at infinity is separable in the $L^{\infty}$-metric, so let $A$ be a dense subset of $C_{0}(\bR^{d+1})$ and $f\in A$. For each $x\in \bQ^{d+1}\cap \Omega^{*}$, we can pass to a subsequence, so that $u_{f,j}$ converges uniformly on $B_{x}$ (recall \eqn{omegastar}), so by a diagonalization argument, we can guarantee $u_{f,j}$ converges uniformly on every $B_{x}$, and hence by a covering argument, on every compact subset of $\Omega^{*}$ to a harmonic function $v_{f}$. 

Note that if $B$ is a ball compactly contained in $\Sigma^{c}$, then $\omega_{j}(B)=0$ for large $j$, and so 
\[ v_{f}=\lim_{j\rightarrow\infty} \int fd\omega_{j}
=\lim_{j\rightarrow\infty} \int_{B^{c}} fd\omega_{j}.\]
Thus,
\begin{equation}
v_{f}=v_{f\one_{\d\Omega^{*}}}.
\label{e:vf1}
\end{equation}

Note that by \eqn{w<mu}, for $\xi\in \d\Omega^{*}\backslash\supp \mu$
\[\int_{B(\xi)}|f|d\omega_{j}^{x_{\xi}}\leq ||f||_{\infty} \omega_{j}^{x_{\xi}}(B(\xi))\lec \mu_{j}(B(\xi))\rightarrow 0.\]
Since $\Omega^{*}$ is uniform, and because each $x\in \Omega^{*}$ is in an open ball contained in $\Omega_{j}$ for $j$ large, we have
\[ \av{\int_{B(\xi)}fd\omega_{j}^{x}} \leq \int_{B(\xi)}|f|d\omega_{j}^{x} \sim \int_{B(\xi)}fd\omega_{j}^{x_{\xi}}\rightarrow 0.\]
Since this holds for all $\xi\in \d\Omega^{*}\backslash \supp \mu$ and \eqn{vf1} holds, we have
\begin{equation}
v_{f}=v_{f\one_{\supp \mu}}.
\end{equation}

Thus, by a diagonalization argument and the density of $A$ in $C_{c}^{\infty}(\bR^{d+1})$, we can ensure that for all $f\in C_{c}^{\infty}$, there is a harmonic function $v_{f}:\Omega^{*}\rightarrow \bR$ that is the uniform limit of $\int fd\omega_{j}$ on compact subsets of $\Omega^{*}$ and such that \eqn{vf} holds. 
\end{proof}

Combining all the previous lemmas, we have now shown the following.

\begin{lemma}
Let $\Omega$ be a uniform domain, $\xi_{0}\in \d\Omega$ and $r_{j}\rightarrow 0$ such that $\mu_{j}=T_{j\#}\omega_{\Omega}^{x_{0}}/\omega_{\Omega}^{x_{0}}(B(\xi_{0},r_{j}))$ converges weakly to a measure $\mu$. Then we may pass to a subsequence such that 

\begin{enumerate}
\item $\supp \mu$ is the boundary of a $C'$-uniform domain $\tilde{\Omega}$, where $C'$ depends on $C$ and $d$.
\item There is a uniform subdomain $\Omega^{*}$ dense in $\tilde{\Omega}$ such that for all $x\in \Omega^{*}$, if $\Omega_{j}:=T_{\xi_{0},r_{j}}(\Omega)$, then $x\in \Omega_{j}$ for all sufficiently large $j$. 
\item For $x\in \Omega^{*}$ and $\omega_{j}:=\omega_{\Omega_{j}}$, and any continuous function $f$ vanishing at infinity, $\int f d\omega_{j}$ converges to a harmonic function $v_{f}$ uniformly on compact subsets of $\Omega^{*}$ such that \eqn{vf} holds.
\end{enumerate}
\label{l:just-uniform}
\end{lemma}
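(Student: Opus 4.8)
The plan is simply to assemble the preceding lemmas. The statement is nothing more than the conjunction of \Lemma{cork-in-limit}, \Lemma{omegastaruniform}, \Lemma{deltaBxi}, the lemma immediately above (the one constructing $\tilde\Omega$), and \Lemma{uf}, together with the two facts established before \Lemma{cork-in-limit} that $\Sigma_n\subseteq\Sigma_{n+1}$ and $\supp\mu\subseteq\Sigma$. The only genuine issue is compatibility of the subsequences: each of these results was proven by passing to a subsequence of $(r_j)$, but since only finitely many such extractions are involved, I would carry them out in succession (or diagonalize) and relabel, producing one subsequence --- still written $(r_j)$ --- along which simultaneously $\d\Omega_j\cap\cnj{B(0,n)}\to\Sigma_n$ in the Hausdorff metric for all $n$ (so that $\Sigma=\bigcup_n\Sigma_n$ and $\supp\mu\subseteq\Sigma$ make sense), the corkscrew-in-the-limit conclusion of \Lemma{cork-in-limit} holds, every ball $B_x$ with $x\in\bQ^{d+1}\setminus\Sigma$ is eventually contained in $\Omega_j$ or eventually contained in $\Omega_j^{\ext}$ (so that $\Omega^*$ in \eqn{omegastar} is well defined), and the uniform-convergence conclusion of \Lemma{uf} holds.

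With this subsequence fixed I would read off the three conclusions. For (1): \Lemma{omegastaruniform} gives that $\Omega^*$ is a $C'$-uniform domain with $\d\Omega^*=\Sigma$, where $C'$ depends only on $C$ and $d$ --- the constant there arises from Martin's \Theorem{martin} and bi-Lipschitz invariance of uniformity, and so involves neither $\xi_0$, $x_0$, nor the sequence; then the preceding lemma produces $\tilde\Omega=\Omega^*\cup\bigcup\{\tfrac\delta2 B(\xi):\xi\in\d\Omega^*\setminus\supp\mu\}$, which is uniform with $\d\tilde\Omega=\supp\mu$, and inspecting that proof a good curve for a pair of points in $\tilde\Omega$ can be chosen to be a good curve for that pair with respect to $\Omega^*$, so the uniformity constant of $\tilde\Omega$ is again controlled by $C$ and $d$. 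For (2): by \Lemma{deltaBxi} each added piece satisfies $\tfrac\delta2 B(\xi)\subseteq\delta B(\xi)\subseteq\cnj{\Omega^*}$, hence $\tilde\Omega\subseteq\cnj{\Omega^*}$, so $\Omega^*$ is dense in $\tilde\Omega$; and if $x\in\Omega^*$ then by \eqn{omegastar} it lies in some $B_y$ with $B_y\subseteq\Omega_j$ for all but finitely many $j$, giving $x\in\Omega_j$ for all large $j$. For (3): this is verbatim the conclusion of \Lemma{uf}, with $v_f$ the harmonic limit produced there and \eqn{vf} its stated property.

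The hard part, such as it is, is purely bookkeeping: tracking the finitely many subsequence extractions, and checking that the constants generated along the way ($8C$ for the interior corkscrew of $\Omega^*$, and $C'$ for the uniformity of $\Omega^*$ and hence of $\tilde\Omega$) depend only on $C$ and $d$, not on the point $\xi_0$, the pole $x_0$, or the chosen blow-up sequence. I do not anticipate any real analytic difficulty, since all of the substance has already been carried out in the lemmas above.
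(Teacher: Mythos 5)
Your proposal matches the paper exactly: the paper offers no separate argument for this lemma beyond the phrase ``Combining all the previous lemmas, we have now shown the following,'' and your assembly of \Lemma{cork-in-limit}, \Lemma{omegastaruniform}, \Lemma{deltaBxi}, the construction of $\tilde{\Omega}$, and \Lemma{uf}, together with the successive subsequence extractions, is precisely that combination. The bookkeeping of subsequences and constants you describe is the only content of the proof, and you have handled it correctly.
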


\begin{lemma}
Let $\Omega\subseteq \bR^{d+1}$ be a uniform domain. For almost every non-degenerate point $\xi_{0}\in \d\Omega$, if $r_{j}\rightarrow 0$ and $\mu_{j}=T_{j\#}\omega_{\Omega}^{x_{0}}/\omega_{\Omega}^{x_{0}}(B(\xi_{0},r_{j}))$, then there is a subsequence that converges weakly to a measure $\mu$ satisfying the conclusions of the previous lemma. In addition, we have $v_{f}=\int fd\tilde{\omega}$ for $f\in C_{0}(\bR^{d+1})$ where $\tilde{\omega}$ is the harmonic measure for $\tilde{\Omega}$.
\label{l:assumend}
\end{lemma}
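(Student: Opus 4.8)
The plan is to read off the existence of the subsequence and items (1)--(3) of \Lemma{just-uniform} directly from \Lemma{tanonempty} and \Lemma{just-uniform}, and to spend the real effort on the identity $v_{f}=\int f\,d\tilde{\omega}$. For the first part: since $\xi_{0}$ is non-degenerate, \Lemma{tanonempty} produces a subsequence along which $\mu_{j}\to\mu\in\Tan(\omega_{\Omega}^{x_{0}},\xi_{0})$, and \Lemma{just-uniform} then supplies $\tilde{\Omega}$, the dense uniform subdomain $\Omega^{*}$, and the harmonic limits $v_{f}$ after passing to a further subsequence; only the identity $v_{f}=\int f\,d\tilde{\omega}$ remains, and it is there that the ``almost every'' is used. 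Decompose the non-degenerate set into level sets on which the hypothesis of \Lemma{holder} holds down to a fixed scale: fixing $\beta'\in(\beta,1)$, set $E_{m}=\{\zeta:\eta_{\delta}(\zeta,r)\le\beta'\text{ for all }r<1/m\}$, so the non-degenerate set is $\bigcup_{m}E_{m}$, and for $\omega_{\Omega}^{x_{0}}$-a.e.\ $\xi_{0}$ in it the point $\xi_{0}$ is a density point (with respect to $\omega_{\Omega}^{x_{0}}$) of some $E_{m}$. Fix such an $\xi_{0}$ and $m$.

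Because $\xi_{0}$ is a density point of $E_{m}$, $\mu$ is still a tangent measure of $\omega_{\Omega}^{x_{0}}|_{E_{m}}$ by \Lemma{same-tangents}, so by \Corollary{hitsball} every $\zeta\in\supp\mu=\d\tilde{\Omega}$ is a limit of points $\xi_{j}=T_{j}(\zeta_{j})$ with $\zeta_{j}\in E_{m}$. For such $\zeta_{j}$ one has $\eta_{\delta}(\zeta_{j},r)\le\beta'$ for $r<1/m$, which rescales to $\eta_{\delta}(\xi_{j},\rho)\le\beta'$ in $\Omega_{j}$ for $\rho<1/(mr_{j})$, hence eventually for any fixed $\rho$. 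Applying \Lemma{holder} to $\Omega_{j}$ at $\xi_{j}$ gives, with constants and exponent $\alpha=\alpha(\beta',\delta)$ independent of $j$,
\[
\omega_{j}^{x}\big(\cnj{B(\xi_{j},\rho)}^{c}\big)\lesssim_{\beta',\delta}\ps{\frac{|x-\xi_{j}|}{\rho}}^{\alpha},\qquad x\in\Omega_{j}\cap B(\xi_{j},\rho).
\]
Combining this uniform boundary decay with the uniform convergence $\int f\,d\omega_{j}\to v_{f}$ on compact subsets of $\Omega^{*}$, with the fact that (by \eqn{vf}) $v_{f}$ depends on $f$ only through its restriction to $\supp\mu=\d\tilde{\Omega}$, and with the continuity of $f$, one obtains $\lim_{x\to\zeta}v_{f}(x)=f(\zeta)$ for every $\zeta\in\d\tilde{\Omega}$. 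The same estimates, combined with the \Lemma{bourgain}-type lower bound already used in \Lemma{deltaBxi}, also show $\tilde{\Omega}$ is uniformly $\Delta$-regular with data depending only on $\beta'$, $\delta$, $C$, $d$; in particular every boundary point of $\tilde{\Omega}$ is regular.

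To conclude, first check that $v_{f}$ extends to a bounded harmonic function on all of $\tilde{\Omega}$: the only points of $\tilde{\Omega}$ outside $\Omega^{*}$ lie inside the half-balls $\tfrac{\delta}{2}B(\xi)$ used to build $\tilde{\Omega}$, and there the vanishing $\omega_{j}^{x_{\xi}}(B(\xi))\to 0$ of \Lemma{deltaBxi} (combined with \Lemma{bourgain}) forces the part of $\Sigma\setminus\supp\mu$ inside each such ball to be removable for the bounded harmonic function $v_{f}$. Now $v_{f}$ and $x\mapsto\int f\,d\tilde{\omega}^{x}$ are both bounded and harmonic on $\tilde{\Omega}$; the latter is the Perron solution with boundary datum $f|_{\d\tilde{\Omega}}$, so it attains the value $f(\zeta)$ at every regular $\zeta$, while the previous paragraph shows $v_{f}$ attains $f(\zeta)$ at every $\zeta\in\d\tilde{\Omega}$. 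Hence their difference is bounded and harmonic on $\tilde{\Omega}$, has boundary limit $0$ at every regular point, and (using $f\in C_{0}$, when $\tilde{\Omega}$ is unbounded) is small near $\infty$; since the irregular points form a polar set, the maximum principle (Phragm\'en--Lindel\"of in the unbounded case) forces the difference to vanish, giving $v_{f}=\int f\,d\tilde{\omega}$ for all $f\in C_{0}(\bR^{d+1})$.

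The crux is the uniform-in-$j$ boundary decay of $\int f\,d\omega_{j}$ near $\d\tilde{\Omega}$: a single blow-up sequence only records the $\limsup$ in the definition of non-degeneracy, which says nothing scale-by-scale, so one must first restrict to a density point of a level set $E_{m}$ on which the decay is uniform down to scale $1/m$, and only rescaling by $r_{j}\to0$ turns this into decay at all scales with $j$-independent constants. Granting that and the $\Delta$-regularity of $\tilde{\Omega}$, the identification is a routine maximum-principle comparison; the remaining technical point is the harmonic extension of $v_{f}$ across $\Sigma\setminus\supp\mu$, for which the vanishing of $\omega_{j}^{x_{\xi}}(B(\xi))$ in \Lemma{deltaBxi} is exactly what is needed.
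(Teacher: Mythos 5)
Your overall skeleton matches the paper's proof closely: the decomposition of the non-degenerate set into level sets $E_{m}$ on which the $\eta_{\delta}(\cdot,r)$ bound holds down to a fixed scale, restriction to a density point, the use of \Lemma{same-tangents}/\Corollary{hitsball} to approximate points of $\supp\mu$ by rescaled points of $E_{m}$, and the application of \Lemma{holder} to get a $j$-independent H\"older decay of $\omega_{j}^{x}(\cnj{B(\xi_{j},\rho)}^{c})$ are exactly the paper's steps. Two caveats on the middle part. First, the passage from that decay to $\lim_{x\to\zeta}v_{f}(x)=f(\zeta)$ is compressed to ``combining\dots one obtains''; the paper spends half its proof here, constructing the barrier $h(x)=f(\xi)+\ve+2\|f\|_{\infty}\omega_{j}^{x}(\cnj{B(\xi,r)}^{c})-\int f\,d\omega_{j}^{x}$ and verifying \eqn{quasievery} at infinity and at quasi-every boundary point, because $\Omega_{j}$ is unbounded and $\omega_{j}^{x}$ need not be a probability measure. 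That estimate is standard-looking but is genuine content you have elided. Second, the $\Delta$-regularity of $\tilde{\Omega}$ is a separate lemma (\Lemma{tilde-uniform}) proved after this one; invoking it here is a forward reference, though harmless.

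The genuine gap is in your final identification. You extend $v_{f}$ harmonically across $\Sigma\setminus\supp\mu$ by asserting that $\omega_{j}^{x_{\xi}}(B(\xi))\to0$ (\Lemma{deltaBxi}) ``forces'' removability. This does not follow: removability of a compact set for bounded harmonic functions requires the set to be polar, and neither Newtonian capacity nor Hausdorff content is lower semicontinuous under Hausdorff convergence, so the degeneration of $\omega_{j}$ (equivalently, of $\Cap(B(\xi)\setminus\Omega_{j})$ via \Lemma{ancona}) on the approximating boundaries says nothing about the capacity of the limit set $\Sigma\cap\tfrac{\delta}{2}B(\xi)$, which could a priori be a $d$-dimensional, non-polar set that simply carries no mass of $\mu$. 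The paper avoids this entirely: it never extends $v_{f}$ across $\Sigma\setminus\supp\mu$, but instead compares $v_{f}$ and $\tilde{u}_{f}=\int f\,d\tilde{\omega}$ \emph{inside $\Omega^{*}$}, showing both coincide with the Perron solution in $\Omega^{*}$ for the interpolated datum $F=\tilde{u}_{f}\one_{\d\Omega^{*}\setminus\supp\mu}+f\one_{\supp\mu}$ (using \eqn{vf} to replace $f$ by $F$, the boundary limits you established on $\supp\mu$, and the continuity of $\tilde{u}_{f}$ at points of $\d\Omega^{*}\setminus\supp\mu$, which are interior to $\tilde{\Omega}$). You should replace the removability step by this comparison in $\Omega^{*}$; the rest of your argument then goes through.
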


\begin{proof}
First note that for $\delta<\tau$, by the maximum principle,
\[
\sup_{|x-\xi|=\delta r}\omega_{B(\xi,r)\cap \Omega}^{x}(\d B(\xi,r)\cap \Omega)
\leq \sup_{|x-\xi|=\tau r}\omega_{B(\xi,r)\cap \Omega}^{x}(\d B(\xi,r)\cap \Omega).\]
Thus, $E=\bigcup E_{n}$ where
\begin{multline*}
E_{n}:=\{\xi\in\d\Omega: \mbox{ for all } r\in (0,1/n), \\
\sup_{|x-\xi|=r/n}\omega_{B(\xi,r)\cap \Omega}^{x}(\d B(\xi,r)\cap \Omega)\leq 1-1/n\} .
\end{multline*}
Fix an $n$ and let $\xi_{0}$ be a point of density for $E_{n}$ with respect to the measure $\omega_{\Omega}^{x_{0}}$.  By \Lemma{tanonempty}, we can pass to a subsequence so that $\mu_{j}$ converges weakly to a measure $\mu$, and thus again to another subsequence so that the conclusions of \Lemma{just-uniform} hold. Let $f\in C_{0}(\bR^{d+1})$, $\ve>0$, and $\xi\in \d\tilde{\Omega}$. Pick $r>0$ small enough so that 
\begin{equation}
|f(\zeta)-f(\xi)|<\ve \mbox{ whenever }|\xi-\zeta|\leq r.
\label{e:fcont}
\end{equation}
Consider the function
\[h(x)=f(\xi)+\ve + 2||f||_{\infty}\omega_{j}^{x}(\cnj{B(\xi,r)}^{c})-\int fd\omega_{j}^{x}.\]
This is harmonic on $\Omega_{j}$. We will show that $h$ is nonnegative. By \cite[Theorem 5.2.6]{AG}, it suffices to show that 
\begin{equation}
\liminf_{x\rightarrow \infty}h(x)\geq 0 \mbox{ and }\liminf_{x\rightarrow \zeta}h(x)\geq 0 \mbox{ for quasi-every } \zeta\in \d\Omega.
\label{e:quasievery}
\end{equation}

Let $B(y,R)$ be a ball containing the support of $f$. Then $||f||_{\infty}\frac{R^{d-1}}{|\cdot-y|^{d-1}}$ is a subharmonic majorant of $|f|$, and thus
\[
\av{\int f d\omega_{j}^{x}}\leq \int |f|d\omega_{j}^{x}\leq\frac{ ||f||_{\infty} R^{d-1}}{|x-y|^{d-1}}\rightarrow 0\]
as $x\rightarrow \infty$. Thus $\lim_{x\rightarrow \infty} h(x)\geq 0$, which proves the first part of \eqn{quasievery}

To prove the second part, we recall that quasi-every point $\zeta\in \d\Omega_{j}$ is regular \cite[Theorem 6.6.8]{AG}, thus we only need to show $\lim_{x\rightarrow \zeta}h(x)\geq 0$ for $\zeta\in \d\Omega_{j}$ regular.
\begin{enumerate}
\item If $\zeta\in \cnj{B(\xi,r)}$, then 
\[ \liminf_{x\rightarrow \zeta} h(x)\geq f(\xi)+\ve-\lim_{x\rightarrow \zeta} \int fd\omega_{j}^{x}
=f(\xi)-f(\zeta)+\ve \stackrel{\eqn{fcont}}{>}0.\]
\item If $\zeta\not\in \cnj{B(\xi,r)}$, then the boundary data of $\omega_{j}^{x}(\cnj{B(\xi,r)}^{c})$ is continuous at $\zeta$ and thus
\[
\liminf_{x\rightarrow \zeta} h(x)=f(\xi)+\ve +2||f||_{\infty}-f(\zeta)\geq \ve>0.\]
\end{enumerate}
Thus, we have shown that $\liminf_{x\rightarrow \zeta}h(x)\geq 0$ for $\zeta$ regular, which proves the last part of \eqn{quasievery}, and hence $h\geq 0$. 

We can similarly show that the function
\[
f(\xi)-\ve -2||f||_{\infty}\omega_{j}^{x}(\cnj{B(\xi,r)^{c}})-\int fd\omega_{j}^{x}\]
is nonpositive. Combining our estimates, we obtain that

\begin{equation}
\av{\int f d\omega_{j}^{x} -f(\xi)}\leq 2||f||_{\infty} \omega_{j}^{x}(\cnj{B(\xi,r)}^{c})+\ve \mbox{ for } x\in \Omega_{j}.
\label{e:f<w}
\end{equation}

Let $\rho\in (0,1/10)$ and $\xi\in \supp \mu$. Let $R=1+|\xi|+\rho$.

Since $\xi_{0}$ is a point of density for $E_{n}$, by \Corollary{hitsball} we can ensure that for $j$ large enough there is $\zeta_{j}\in T_{j}(E_{n})$ with 
\begin{equation}
|\xi-\zeta_{j}|<\rho r.
\label{e:xi-zetaj}
\end{equation} Setting $\xi_{j}=T_{j}^{-1}(\zeta_{j})$, we have $\xi_{j}\in E_{n}\cap B(\xi_{0},Rr_{j})$ with $|T_{j}^{-1}(\xi)-\xi_{j}|<\rho r r_{j}$. Note that by the definition of $E_{n}$ and by \Lemma{holder}, for $j$ large enough so that $\frac{1}{nr_{j}}>100 r$ we have
\[\omega_{j}^{x}(\cnj{B(\zeta_{j},(1-\rho)r)}^{c})\lec_{n} \ps{\frac{|x-\zeta_{j}|}{(1-\rho)r}}^{\alpha} \mbox{ for }x\in B(\zeta_{j},(1-\rho)r)\cap\Omega_{j}.\]

Thus, we have for $x\in \Omega_{j}\cap B(\xi,r/4)\backslash B(\xi,2\rho r)\subseteq B(\zeta_{j},(1-\rho)r)$ that 
\begin{equation}
|x-\zeta_{j}|\stackrel{\eqn{xi-zetaj}}{\leq} |x-\xi|+\rho r\leq \frac{3}{2}|x-\xi|
\end{equation} 
and
\begin{equation}
\omega_{j}^{x}(\cnj{B(\xi,r)}^{c})
\leq\omega_{j}^{x}(\cnj{B(\zeta_{j},(1-\rho)r)}^{c})
\lec \ps{\frac{|x-\zeta_{j}|}{(1-\rho)r}}^{\alpha}
\lec \ps{\frac{|x-\xi|}{r}}^{\alpha}.
\label{e:w<x/r}
\end{equation}
Combining \eqn{f<w} and \eqn{w<x/r}, we get
\[\av{\int f d\omega_{j}^{x} -f(\xi)}
\lec \ps{\frac{|x-\xi|}{r}}^{\alpha}+\ve \;\; \mbox{ if }x\in \Omega_{j}\cap B(\xi,r/4)\backslash B(\xi,2\rho r).\]
Letting $j\rightarrow \infty$, and using the fact that $x\in \Omega^{*}$ implies $x\in \Omega_{j}$ for all large $j$, we have 
\[\av{v_{f}(x)-f(\xi)}
\lec \ps{\frac{|x-\xi|}{r}}^{\alpha} +\ve\;\; \mbox{ if }x\in \Omega^{*}\cap B(\xi,r/4)\backslash B(\xi,2\rho r).\]
Now let $\rho\rightarrow 0$ and we get
\[\av{v_{f}(x)-f(\xi)}
\lec \ps{\frac{|x-\xi|}{r}}^{\alpha}+\ve\;\; \mbox{ if }x\in B(\xi,r/4)\cap \Omega^{*}.\]
Hence, 
\[ f(\xi)-\ve \leq \liminf_{x\rightarrow \xi} v_{f}(x)\leq \limsup_{x\rightarrow \xi} v_{f}(x)\leq f(\xi)+\ve .\]
Letting $\ve\rightarrow 0$, we now have 
\[\lim_{x\rightarrow \xi} v_{f}(x)=f(\xi).\]

Thus, $v_{f}$ is a harmonic function on $\Omega^{*}$ whose limits at  $\d\tilde{\Omega}\subseteq \d\Omega^{*}$ coincide with $f$. Let $\tilde{\omega}=\omega_{\tilde{\Omega}}$, $\tilde{u}_{f}=\int fd\tilde{\omega}$, and $F=\tilde{u}_{f}\one_{\d\Omega^{*}\backslash \supp \mu}+f\supp\mu$. By \eqn{vf}, $v_{f}=v_{F}$. Moreover, $v_{F}$ is harmonic in $\Omega^{*}$ and has boundary limit equal to $F$ at every regular point in $\d\Omega^{*}$. In particular, it equals $f$ everywhere on $\supp \mu=\d\tilde{\Omega}$ and equals $\tilde{u}_{f}$ at every regular point of $\d \Omega^{*}\backslash \supp \mu$ since $\tilde{u}_{f}$ is continuous on $\d\Omega^{*}\backslash\supp \mu$. Thus, $v_{f}=v_{F}=\int Fd\omega_{\Omega^{*}}$. The function $\tilde{u}_{f}$ agrees with $F$ at every boundary point of $\d\Omega^{*}$ as well, hence $\tilde{u}_{f}=\int Fd\omega_{\Omega^{*}}=v_{f}$. Therefore $f$ extends harmonically to all of $\tilde{\Omega}$ and in fact $v_{f}=\int fd\tilde{\omega}$. 
\end{proof}

\begin{lemma}
With the assumptions of \Lemma{assumend}, if $E$ is the set of $(\beta,\delta)$-non-degenerate, then for almost every point $\xi_{0}\in E$, $\Omega^{*}$ is $\Delta$-uniform with constants depending on $C,d,\delta$, and $\beta$. 
\label{l:tilde-uniform}
\end{lemma}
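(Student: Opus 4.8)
The plan is the following. Since $\Omega^{*}$ is already a $C'$-uniform domain by \Lemma{omegastaruniform}, and since for uniform domains the uniform $\Delta$-regularity condition \eqref{e:duniform} holds for one value of the parameter as soon as it holds for any (the remark after Definition \ref{d:deltauniform}), it suffices to produce $\delta_{0}\in(0,1)$ and $\beta^{*}<1$, depending only on $C,d,\delta,\beta$, such that $\omega^{x}_{B(\xi,r)\cap\Omega^{*}}(\partial B(\xi,r)\cap\Omega^{*})\leq\beta^{*}$ for all $\xi\in\partial\Omega^{*}=\Sigma$, all $r>0$ and all $x\in B(\xi,\delta_{0}r)\cap\Omega^{*}$; the value $R_{\Delta}=\infty$ comes for free since the estimate below is produced at an arbitrary scale. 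To keep the constants independent of $\xi_{0}$, I would replace the decomposition $E=\bigcup_{n}E_{n}$ from the proof of \Lemma{assumend} by $E=\bigcup_{k}E^{(k)}$, where $E^{(k)}=\{\xi\in E:\eta_{\delta}(\xi,r)\leq\beta^{*}:=\tfrac{1+\beta}{2}\text{ for all }r\in(0,1/k)\}$; this is a genuine decomposition because $\limsup_{r\to0}\eta_{\delta}(\xi,r)\leq\beta<\beta^{*}$ for $\xi\in E$, and now $\delta$ and $\beta^{*}$ are fixed while only $1/k$ depends on $\xi_{0}$. Fix $k$ so that $\xi_{0}$ is a density point of $E^{(k)}$ for $\omega_{\Omega}^{x_{0}}$; by \Corollary{hitsball} we may assume in addition that $T_{j}(E^{(k)})$ accumulates on every point of $\supp\mu$.

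The core step transfers the nondegeneracy to the approximating domains $\Omega_{j}$ at balls centered on $\supp\mu$. If $\zeta\in E^{(k)}$, then for $s<1/k$ the function $y\mapsto\omega^{y}_{B(\zeta,s)\cap\Omega}(\partial B(\zeta,s)\cap\Omega)$ is harmonic on $B(\zeta,s)\cap\Omega$, is $\leq\beta^{*}$ on the sphere $\partial B(\zeta,\delta s)\cap\Omega$, and vanishes (quasi-everywhere) on $\partial\Omega\cap B(\zeta,s)$, so the maximum principle gives $\omega^{y}_{B(\zeta,s)\cap\Omega}(\partial B(\zeta,s)\cap\Omega)\leq\beta^{*}$ for \emph{every} $y\in B(\zeta,\delta s)\cap\Omega$; by scale invariance the same holds for $\Omega_{j}$ at each $\zeta\in T_{j}(E^{(k)})$ whenever $s<1/(kr_{j})$. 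Now fix $\xi\in\supp\mu$ and $r>0$. After passing to a subsequence, \Corollary{hitsball} yields $\zeta_{j}\in T_{j}(E^{(k)})$ with $|\zeta_{j}-\xi|<\rho r$ for a small $\rho=\rho(\delta)$ to be fixed. Taking $s:=r/2$ (so $s<1/(kr_{j})$ for $j$ large), one has $\overline{B(\zeta_{j},s)}\subseteq B(\xi,r)$, and since a Brownian path leaving $B(\xi,r)\cap\Omega_{j}$ through $\partial B(\xi,r)$ stays in $\Omega_{j}$ and must therefore first leave $B(\zeta_{j},s)\cap\Omega_{j}$ through its spherical part, the maximum principle gives
\[
\omega^{x}_{B(\xi,r)\cap\Omega_{j}}(\partial B(\xi,r)\cap\Omega_{j})\ \leq\ \omega^{x}_{B(\zeta_{j},s)\cap\Omega_{j}}(\partial B(\zeta_{j},s)\cap\Omega_{j})\ \leq\ \beta^{*}
\]
for every $x\in B(\zeta_{j},\delta s)\cap\Omega_{j}$. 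Choosing $\rho$ and $\delta_{0}$ with $\delta_{0}+\rho\leq\delta/2$ makes $B(\xi,\delta_{0}r)\subseteq B(\zeta_{j},\delta s)$, so the bound holds on $B(\xi,\delta_{0}r)\cap\Omega_{j}$ for all large $j$.

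It remains to pass to the limit $\Omega_{j}\to\Omega^{*}$. I would do this by running the argument of \Lemma{uf} and the barrier construction in the proof of \Lemma{assumend} for the truncated domains $D_{j}:=B(\xi,r)\cap\Omega_{j}$ and $D^{*}:=B(\xi,r)\cap\Omega^{*}$: by \Lemma{cork-in-limit} these have the interior corkscrew property, their boundaries converge in the Hausdorff metric, and every $\bQ^{d+1}$-ball compactly contained in $D^{*}$ eventually lies in $D_{j}$; hence, for $g\in C_{0}(\bR^{d+1})$, the harmonic functions $\int g\,d\omega^{x}_{D_{j}}$ converge uniformly on compact subsets of $D^{*}$ to a harmonic function on $D^{*}$ which, at regular boundary points, equals $g$ both on $\partial\Omega^{*}\cap B(\xi,r)$ and on $\partial B(\xi,r)\cap\Omega^{*}$, i.e. to $\int g\,d\omega^{x}_{D^{*}}$. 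Choosing $g$ continuous with $\one_{\partial B(\xi,r)}\leq g$ and $\supp g$ in a thin neighborhood of $\partial B(\xi,r)$ and shrinking this neighborhood (with the usual care concerning the corner $\partial B(\xi,r)\cap\partial\Omega^{*}$) then yields $\omega^{x}_{D^{*}}(\partial B(\xi,r)\cap\Omega^{*})\leq\beta^{*}$ for $x\in B(\xi,\delta_{0}r)\cap\Omega^{*}$, which is the desired estimate at every $\xi\in\supp\mu$. For $\xi\in\Sigma\setminus\supp\mu$, \Lemma{deltaBxi} shows $\delta B(\xi)\cap\Omega^{*,\ext}=\emptyset$; one combines this with the interior corkscrew property and uniformity of $\Omega^{*}$ for the scales $r\leq r_{\delta B(\xi)}$, and for larger $r$ the ball $B(\xi,r)$ already meets $\supp\mu$ and one reduces to the previous case by the same Markov comparison.

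The main obstacle is this last limiting step: the truncated domains $B(\xi,r)\cap\Omega_{j}$ need not be uniform, so the convergence of their harmonic measures -- and especially the identification of the boundary values of the limit along $\partial B(\xi,r)$, where $\partial\Omega_{j}$ may press against the sphere -- cannot be quoted as a black box and requires redoing the barrier analysis of \Lemma{uf}--\Lemma{assumend} in this slightly larger setting. I should also note that one cannot shortcut through the capacity characterization of \Lemma{ancona}: the only nondegeneracy one controls for $\Omega_{j}$ at $\zeta_{j}$ is at ratio $\delta$ between the ``core'' and the ``escape'' radii, which is too small to feed Ancona's criterion once $\beta$ is close to $1$; of course, once $\Omega^{*}$ has been shown to be uniformly $\Delta$-regular, \Lemma{ancona} does return that $\Omega^{*}$ satisfies the capacity density condition when $d\geq2$.
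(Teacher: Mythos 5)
Your overall strategy---decompose $E=\bigcup_k E^{(k)}$ by the scale at which non-degeneracy kicks in, work at a density point, use \Corollary{hitsball} to plant points of $T_{j}(E^{(k)})$ near any $\xi\in\supp\mu$, transfer the bound $\omega^{x}_{B(\zeta_{j},s)\cap\Omega_{j}}(\d B(\zeta_{j},s)\cap\Omega_{j})\leq\beta^{*}$ by the maximum principle, and then pass to the limit---matches the paper's up to and including the penultimate step. But the limiting step, which you yourself flag as ``the main obstacle,'' is precisely where your argument stops being a proof: you propose to redo the barrier analysis of \Lemma{uf}--\Lemma{assumend} for the truncated domains $B(\xi,r)\cap\Omega_{j}$, and these are not uniform, so none of the convergence machinery built earlier applies to them. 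Moreover, the direction you need is the hard one: you must bound the \emph{escape} probability of the limit domain from above by those of the approximants, which amounts to identifying the boundary values of $\lim_{j}\omega^{\cdot}_{B(\xi,r)\cap\Omega_{j}}(\d B(\xi,r)\cap\Omega_{j})$ along $\d B(\xi,r)$, including near the ``corner'' $\d B(\xi,r)\cap\d\Omega^{*}$, and along the possibly very irregular set $\d\Omega_{j}\cap B(\xi,r)$. This is not a routine repetition of the earlier lemmas, and you do not supply it.

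The paper's resolution is a single device you are missing: by \cite[Lemma 4.1]{Azz14}, for each ball $B$ meeting $\d\Omega_{j}$ there is a $C$-\emph{uniform} domain $\Omega_{j}^{B}$ with $B\cap\Omega_{j}\subseteq\Omega_{j}^{B}\subseteq CB\cap\Omega_{j}$. One then applies \Lemma{assumend} verbatim to the sequence $\Omega_{j}^{B}$ to get a limiting uniform domain $\tilde{\Omega}^{B}$ with weak convergence of harmonic measures, and chains maximum principles: $\omega^{x}_{CB\cap\tilde{\Omega}}(\cnj{B}\cap\d\tilde{\Omega})\geq\omega^{x}_{\tilde{\Omega}^{B}}(\cnj{B})\geq\limsup_{j}\omega^{x}_{\Omega_{j}^{B}}(\tfrac12\cnj{B})\geq\limsup_{j}\omega^{x}_{\Omega_{j}\cap B_{j}}(\cnj{B_{j}}\cap\d\Omega_{j})\geq1-\beta$. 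Note that this only uses the easy (upper semicontinuity on closed sets) direction of weak convergence applied to the \emph{good} part of the boundary, sidestepping the boundary-identification problem on the sphere entirely. Two further remarks: your aside that Ancona's criterion cannot be fed directly is correct and consistent with the paper; but your treatment of $\xi\in\Sigma\setminus\supp\mu$ is also problematic, since \Lemma{deltaBxi} shows the complement of $\Omega^{*}$ is \emph{thin} (indeed empty off $\d\Omega^{*}$) in $\delta B(\xi)$ there, which works against, not for, $\Delta$-regularity at such points---the paper's proof (and the statement of \LemmaI) is really about $\tilde{\Omega}$, whose boundary is exactly $\supp\mu$, so those points never arise.
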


\begin{proof}

We will assume $\delta=\frac{1}{2}$ for simplicity. Let $B=B(\xi,r)$ be a ball with $\xi\in \d\tilde{\Omega}$, $r>0$, and let $x\in \d \frac{1}{100}B\cap \Omega^{*}$. By Lemma 4.1 in \cite{Azz14}, there is a constant $C>0$ depending only on the uniformity constant of $\Omega_{j}$ (which is the same constant for all $j$) so that for all $j$ with $\d\Omega_{j}\cap B\neq\emptyset$, there is a $C$-uniform domain $\Omega_{j}^{B}\subseteq \Omega_{j}\cap CB$ such that $B\cap \Omega_{j}\subseteq \Omega_{j}^{B}$, see Figure \ref{f:ugh}. By \Lemma{assumend}, we can pass to a subsequence and guarantee there are uniform domains $\Omega^{B,*}\subseteq \tilde{\Omega}^{B}$ (the former dense in the latter) so that $\omega_{\Omega_{j}^{B}}^{y}$ converges weakly to $\omega_{\tilde{\Omega}^{B}}^{y}$ for all $y\in \Omega^{B,*}$. By the definition of $\tilde{\Omega}^{B}$, we know $\tilde{\Omega}^{B}\subseteq CB\cap\tilde{\Omega}$ and $B\cap \tilde{\Omega}\subseteq \tilde{\Omega}^{B}$.

\begin{figure}[h]
\includegraphics[width=360pt]{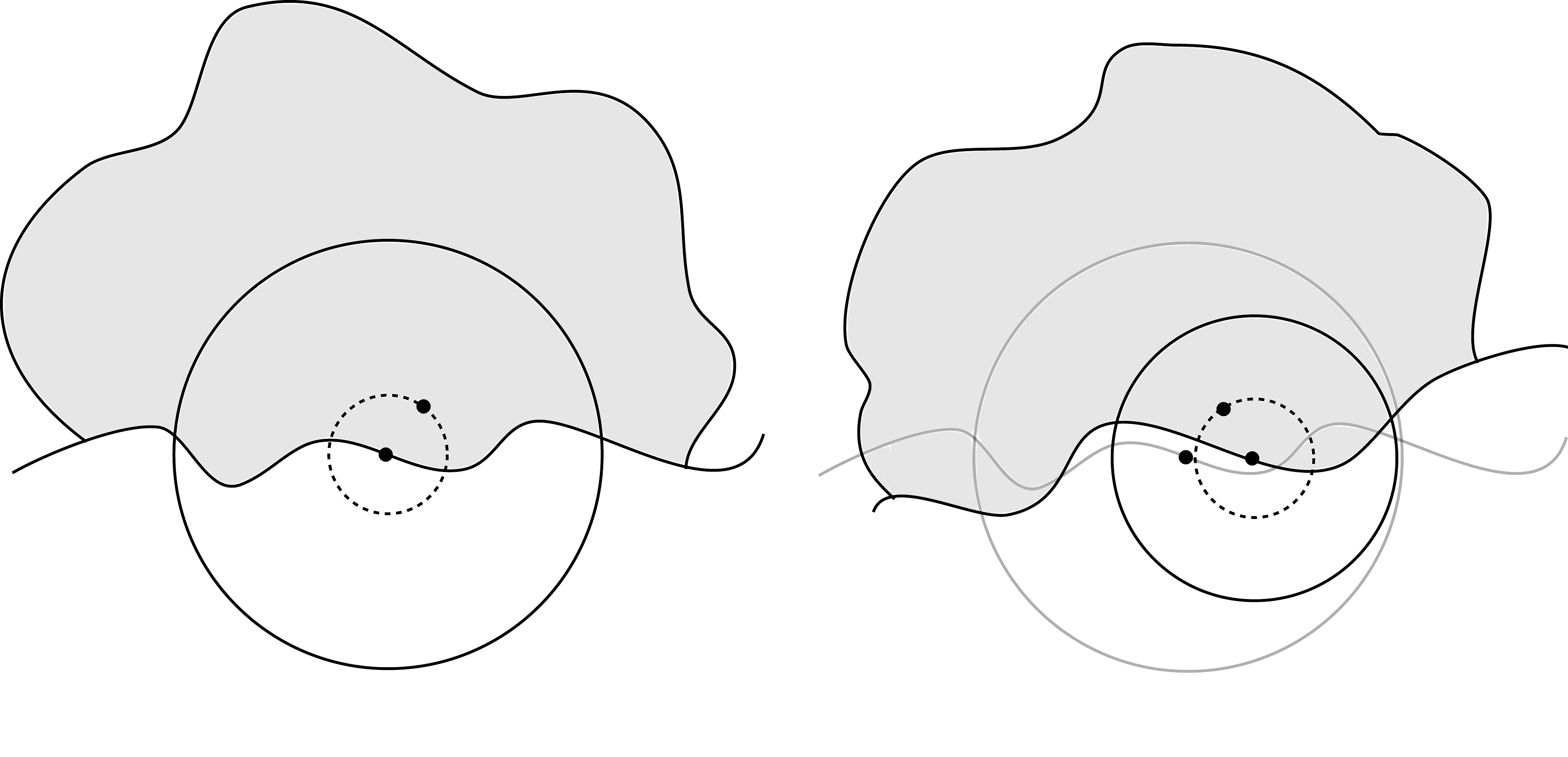}
\begin{picture}(0,0)(180,0)
\put(65,170){$\tilde{\Omega}^{B}$}
\put(80,120){$B$}
\put(50,90){$\frac{1}{100}B$}
\put(80,75){$\xi$}
\put(100,100){$x$}
\put(0,68){$\d\tilde{\Omega}$}
\put(270,160){$\Omega^{B}_{j}$}
\put(270,95){$x$}
\put(260,75){$\xi$}
\put(280,58){$\frac{1}{2}B_{j}$}
\put(245,55){$2B_{j}$}
\put(345,110){$\d\Omega_{j}$}

\end{picture}
\caption{In the figure on the left, the shaded area depicts $\Omega^{B*}\subseteq CB\cap \Omega^{*}$ and on the right we have $\Omega_{j}^{B}\subseteq CB\cap \Omega_{j}$. }
\label{f:ugh}
\end{figure}

Let $E_{n}=\{\xi\in E:\eta_{1/2}(\xi,r)<\beta \mbox{ for }r<1/n\}$. Then almost every $\xi_{0}\in E$ is a point of density for some $E_{n}$, $n\in \bN$. By \Corollary{hitsball}, for each $j$ sufficiently large we can pick $\xi_{j}\in T_{j}(E_{n})\subseteq \d\Omega_{j}$ converging to $\xi$ and set $B_{j}=B(\xi_{j},2|\xi_{j}-x|)$ so that $B_{j}\subseteq \frac{1}{2}B$, $r_{B_{j}}\geq r_{B}/1000$, and $x\in \d \frac{1}{2}B_{j}$. Then by the maximum principle, weak limits, the maximum principle again, and since $\xi_{j}\in T_{j}(E_{n})$, we have
\begin{align*}
\omega_{CB\cap \tilde{\Omega}}^{x}(\cnj{B}\cap \d\tilde{\Omega})
& \geq \omega_{\tilde{\Omega}^{B}}^{x}(\cnj{B})
\geq \limsup_{j\rightarrow\infty}\omega_{\Omega_{j}^{B}}^{x}\ps{\frac{1}{2}\cnj{B}}\\
& \geq \limsup_{j\rightarrow\infty}\omega_{\Omega_{j}\cap B_{j}}^{x}(\cnj{B_{j}}\cap \d\Omega_{j})\geq 1-\beta>0
\end{align*}
for some $\beta$ depending only on $\delta$ and $\beta$. This implies $\omega_{CB\cap \tilde{\Omega}^{B}}^{x}(\d B\cap \tilde{\Omega})\leq \beta<1$. Since $x\in \d \frac{1}{100} B$ and our choice of ball $B$ were arbitrary, we have thus shown $\Delta$-uniformity. 
\end{proof}

\begin{lemma}
With the assumptions of \Lemma{assumend}, if $B'\subseteq B=B(\xi,r)$  are balls centered on $\d\tilde{\Omega}$ and $B(x,\frac{r}{C'})\subseteq B\cap \Omega$ is a $C'$-corkscrew ball (recall $\tilde{\Omega}$ is uniform) then
\begin{equation}
\frac{\omega_{\tilde{\Omega}}^{x}(B')}{\omega_{\tilde{\Omega}}^{x}(B)}
\sim_{C,d} \frac{\mu(B')}{\mu(B)}.
\label{e:mumu}
\end{equation}
\label{l:mumu}
\end{lemma}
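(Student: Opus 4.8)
The strategy is to transfer the change-of-pole estimate \Lemma{w/w} from the blow-up domains $\Omega_j$ down to $\tilde\Omega$. Throughout, recall that since affine maps preserve harmonicity, $\mu_j=T_{j\#}\omega_\Omega^{x_0}/\omega_\Omega^{x_0}(B(\xi_0,r_j))=\omega_{\Omega_j}^{T_j(x_0)}/\omega_{\Omega_j}^{T_j(x_0)}(\bB)$, and that $|T_j(x_0)|=|x_0-\xi_0|/r_j\to\infty$. First I reduce to a convenient corkscrew point. Applying \Lemma{cork-in-limit} to $(\xi,r)$ produces a ball $B(x^{*},\tfrac{r}{4C})\subseteq B\cap\Omega_j$ for all large $j$; by the observation used in the proof of \Lemma{omegastaruniform} that any ball contained in $\Omega_j$ for all large $j$ lies in $\Omega^{*}$, we get $B(x^{*},\tfrac{r}{4C})\subseteq\Omega^{*}\subseteq\tilde\Omega$, so $x^{*}$ is a corkscrew point for $B$ with $x^{*}\in\Omega^{*}$. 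Since $x$ and $x^{*}$ are both corkscrew points for $B$ in the uniform domain $\tilde\Omega$, they sit at distance $\gtrsim_{C,d}r$ from $B\cap\d\tilde\Omega$, so \Lemma{w/w} (with $E=B'\cap\d\tilde\Omega$, using that harmonic measure is carried by the boundary) gives $\omega_{\tilde\Omega}^{x}(B')/\omega_{\tilde\Omega}^{x}(B)\sim_{C,d}\omega_{\tilde\Omega}^{x^{*}}(B')/\omega_{\tilde\Omega}^{x^{*}}(B)$. Hence it suffices to prove \eqn{mumu} with $x^{*}$ in place of $x$.

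For the pre-limit step, pick $\xi_j\in\d\Omega_j$ with $\xi_j\to\xi$ (possible as $\xi\in\supp\mu\subseteq\Sigma$) and fix a radius $\rho_0\in[10r,11r]$ with $\mu(\d B(\xi,\rho_0))=\omega_{\tilde\Omega}^{x^{*}}(\d B(\xi,\rho_0))=0$ (all but countably many radii work); put $D_j=B(\xi_j,\rho_0)$. For any ball $B''=B(\zeta,s)\subseteq B(\xi,2r)$ we have $B''\subseteq D_j$ for $j$ large, while $x^{*}$ and $T_j(x_0)$ both lie at distance $\gtrsim_{C}\rho_0$ from $D_j\cap\d\Omega_j$. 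Therefore \Lemma{w/w} applied in $\Omega_j$ (poles $T_j(x_0)$, $x^{*}$; set $E=B''\cap\d\Omega_j$), together with the formula for $\mu_j$, yields
\[
\frac{\mu_j(B'')}{\mu_j(D_j)}=\frac{\omega_{\Omega_j}^{T_j(x_0)}(B'')}{\omega_{\Omega_j}^{T_j(x_0)}(D_j)}\sim_{C,d}\frac{\omega_{\Omega_j}^{x^{*}}(B'')}{\omega_{\Omega_j}^{x^{*}}(D_j)},
\]
with constants uniform in $j$ because the $\Omega_j$ all share the uniformity constant of $\Omega$.

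Now let $j\to\infty$, using that $\mu_j\to\mu$ weakly and $\omega_{\Omega_j}^{x^{*}}\to\omega_{\tilde\Omega}^{x^{*}}$ weakly (the latter from \Lemma{assumend}, legitimate since $x^{*}\in\Omega^{*}$). If $s$ is also chosen so that $\mu(\d B'')=\omega_{\tilde\Omega}^{x^{*}}(\d B'')=0$, then $\mu_j(B'')\to\mu(B'')$ and $\omega_{\Omega_j}^{x^{*}}(B'')\to\omega_{\tilde\Omega}^{x^{*}}(B'')$, while $\mu_j(D_j)\to\mu(B(\xi,\rho_0))$ and $\omega_{\Omega_j}^{x^{*}}(D_j)\to\omega_{\tilde\Omega}^{x^{*}}(B(\xi,\rho_0))$ by squeezing $D_j$ between $B(\xi,\rho_0\mp\varepsilon)$ and letting $\varepsilon\downarrow0$ through good values. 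The limiting denominators are positive: $\mu(B(\xi,\rho_0))>0$ since $\xi\in\supp\mu$, and $\omega_{\tilde\Omega}^{x^{*}}(B(\xi,\rho_0))\gtrsim1>0$ since $\tilde\Omega$ is $\Delta$-regular (\Lemma{tilde-uniform}), so harmonic measure on it is doubling and nondegenerate (cf.\ \cite{AH08}). This gives $\mu(B(\zeta,s))/\mu(B(\xi,\rho_0))\sim_{C,d}\omega_{\tilde\Omega}^{x^{*}}(B(\zeta,s))/\omega_{\tilde\Omega}^{x^{*}}(B(\xi,\rho_0))$ for every admissible $(\zeta,s)$. Finally, approximating $B'$ and $B$ from inside by concentric balls whose good radii increase to $r'$ and $r$ respectively, and using monotone convergence of $\mu$ and of $\omega_{\tilde\Omega}^{x^{*}}$, the comparison persists with $B'$ and with $B$ in place of $B(\zeta,s)$; dividing the two relations cancels the reference ball $B(\xi,\rho_0)$ and yields \eqn{mumu} for $x^{*}$, hence for $x$ by the first paragraph.

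The only real difficulty is organizational: one must move between the varying boundaries $\d\Omega_j$ and the fixed $\d\tilde\Omega$ while keeping every comparison constant independent of $j$ (which works because all $\Omega_j$ are uniform with one fixed constant), make sure the auxiliary corkscrew point $x^{*}$ genuinely lands in $\Omega^{*}$ so that $\omega_{\Omega_j}^{x^{*}}\to\omega_{\tilde\Omega}^{x^{*}}$ is available, and route all open-versus-closed-ball ambiguities in the weak limits through approximation by balls of good radius — we cannot shortcut this with a doubling property of $\mu$, as that is essentially the content of the lemma.
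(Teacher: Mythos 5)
Your proof is correct and follows essentially the same route as the paper's: both arguments identify $\mu_j$ with the normalized harmonic measure of $\Omega_j$ at the pole $T_j(x_0)$, apply the change-of-pole estimate of \Lemma{w/w} once in $\tilde\Omega$ and once in each $\Omega_j$ to pass to a fixed auxiliary pole in $\Omega^{*}$, and then use the weak convergence $\omega_{\Omega_j}^{p}\to\omega_{\tilde\Omega}^{p}$ to take limits. The only differences are cosmetic: the paper's auxiliary pole is a corkscrew point of a large ball $B_j\approx M\bB$ rather than of $B$ itself, and it handles the open/closed ball issue with $(1-\ve)B$ and $\rho B'$ dilations instead of radii of vanishing boundary measure.
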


\begin{proof}
Let $T_{j}=T_{\xi_{0},r_{j}}$ and $\mu_{j}:= T_{j\#}\omega_{\Omega}^{x_{0}}/\omega_{\Omega}^{x_{0}}(B(\xi_{0},r_{j}))$ be the subsequence obtained in \LemmaI (note that $\mu_{j}(\bB)=1$). Let $\zeta\in \d\tilde{\Omega}$, $B=B(\zeta,R)$, $\xi\in B\cap \d\tilde{\Omega}$, and $r\in(0,R)$ so that 
\begin{equation} B'=B(\xi,r)\subseteq B(\xi,2r)\subseteq B.
\label{e:B'B}
\end{equation}
Fix $M\geq 1$ so that $2B\subseteq \frac{M}{4C} \bB$. 

Let $\xi_{j}\in \d\Omega_{j}$ converge to $0$ and $B_{j}=B(\xi_{j},M-|\xi_{j}|)$, so for $j$ large we have $\frac{M}{2}\bB\subseteq B_{j}\subseteq M\bB$. Let $y_{j}$ be a corkscrew point for $B_{j}$ in $\Omega_{j}$, so $B(y_{j},r_{B_{j}}/C)\subseteq B_{j}\cap \Omega_{j}$. By passing to a subsequence if necessary, and since $r_{B_{j}}\rightarrow M$, we can assume there is $y$ so that $B(y,\frac{M}{2C})\subseteq \Omega_{j}\cap B_{j}$ for all $j$ large enough. Since $2B\subseteq \frac{M}{4C}\bB$, we know $y\in \Omega_{j}\backslash 2B$, and for $j$ large enough we know $x_{j}:=T_{j}(x_{0})\in \Omega_{j}\backslash M\bB\subseteq \Omega_{j}\backslash MB_{j}$, so we can apply \Lemma{w/w} twice to get for $\ve\in (0,1)$

\begin{align*}
\frac{\omega_{\tilde{\Omega}}^{x}(B')}{\omega_{\tilde{\Omega}}^{x}(B)}
& \stackrel{\eqn{w/w}}{\sim} \frac{\omega_{\tilde{\Omega}}^{y}(B')}{\omega_{\tilde{\Omega}}^{y}(B)}
\leq \liminf_{j\rightarrow\infty}\frac{\omega_{\Omega_{j}}^{y}(B')}{\omega_{\Omega_{j}}^{y}((1-\ve)B)}\\
&  \stackrel{\eqn{w/w}}{\sim}  \liminf_{j\rightarrow\infty}\frac{\omega_{\Omega_{j}}^{x_{j}}(B')/\omega_{\Omega_{j}}^{x_{j}}(B_{j})}{\omega_{\Omega_{j}}^{x_{j}}((1-\ve)B))/\omega_{\Omega_{j}}^{x_{j}}(B_{j})}\\
& =\liminf_{j\rightarrow\infty}\frac{T_{j\#}\omega_{\Omega}^{x_{0}}(B')}{T_{j\#}\omega_{\Omega}^{x_{0}}((1-\ve)B))} \\ 
& =\liminf_{j\rightarrow\infty}\frac{\mu_{j}(B')}{\mu_{j}((1-\ve)B))}
\leq \frac{\mu(\cnj{B'})}{\mu((1-\ve)B)}
\end{align*}
Letting $\ve\rightarrow 0$, we get
\[
\frac{\omega_{\tilde{\Omega}}^{x}(B')}{\omega_{\tilde{\Omega}}^{x}(B)}
\leq \frac{\mu(\cnj{B'})}{\mu(B)}.\]
Now apply this to $\rho B'$ and take $\rho\uparrow 1$, we get 
\[
\frac{\omega_{\tilde{\Omega}}^{x}(B')}{\omega_{\tilde{\Omega}}^{x}(B)}
=\lim_{\rho\uparrow 1}\frac{\omega_{\tilde{\Omega}}^{x}(\rho B')}{\omega_{\tilde{\Omega}}^{x}(B)}
\lec \lim_{\rho\rightarrow 1}\frac{\mu(\rho \cnj{B'})}{\mu(B)}
=\frac{\mu({B'})}{\mu(B)}.\]
Thus, we get  one inequality in \eqn{mumu}. The other inequality has a similar proof. 


\end{proof}

\begin{lemma}
With the assumptions of \Lemma{assumend}, suppose there is $E\subseteq \d\Omega$ with $\omega_{\Omega}^{x_{0}}(E)>0$ and $c>0$ so that
\begin{equation}
\liminf_{r\rightarrow 0} \frac{\cH^{s}_{\infty}(B(\xi,r)\cap \d\Omega)}{r^{s}}\geq c \mbox{ for all }\xi\in E.
\end{equation}
Then for $\omega_{\Omega}^{x_{0}}$-almost every $\xi_{0}\in E$, there is $c'>0$ depending on $s,d$ and $c$ so that 
\begin{equation}
\cH^{s}_{\infty}(B(\xi,r)\cap \d\tilde{\Omega})\geq c'r^{s} \mbox{ for all }\xi\in \d\tilde{\Omega }\mbox{ and }r>0.
\end{equation}
\label{l:contentlemma}
\end{lemma}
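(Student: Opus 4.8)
The idea is to transport the content estimate from $\d\Omega$ to $\d\tilde\Omega$ through the rescaled domains $\Omega_j=T_j(\Omega)$, using the principle already exploited in the proof of \Lemma{deltaBxi}: \emph{small harmonic measure forces small Hausdorff content}, via \Lemma{bourgain}. First I would fix the right base point. Writing $E=\bigcup_n E_n$ with
\[
E_n=\Bigl\{\xi\in E:\ \cH^s_\infty(B(\xi,\rho)\cap\d\Omega)\ge \tfrac c2\rho^s\ \text{ for all }\rho\in(0,1/n)\Bigr\},
\]
almost every $\xi_0\in E$ is a point of $\omega_\Omega^{x_0}$-density for some $E_n$; fix such a $\xi_0$ and such an $n$ (this $\xi_0$ is automatically non-degenerate by \Lemma{bourgain}, so after discarding a null set of $E$ we may assume it also obeys the conclusions of \Lemma{assumend}). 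By \Lemma{same-tangents} and \Corollary{hitsball}, for each $\xi\in\supp\mu=\d\tilde\Omega$ there are $\xi_j'\in T_j(E_n)\subseteq\d\Omega_j$ with $\xi_j'\to\xi$; since $\zeta_j:=T_j^{-1}(\xi_j')\in E_n$ and $\cH^s_\infty(T_j(A))=r_j^{-s}\cH^s_\infty(A)$, this yields
\[
\cH^s_\infty\bigl(B(\xi_j',\tau)\cap\d\Omega_j\bigr)\ge \tfrac c2\,\tau^s\qquad\text{for all }\tau\in\bigl(0,\tfrac1{nr_j}\bigr),
\]
hence for each fixed $\tau$ once $j$ is large.

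The crux is that this content cannot escape from a neighbourhood of $\supp\mu$: I claim that for each $\zeta\in\Sigma\setminus\supp\mu$, with $B(\zeta)=B(\zeta,\dist(\zeta,\supp\mu)/2)$ and $\delta$ the parameter from \Lemma{bourgain}, one has $\cH^s_\infty(\d\Omega_j\cap\delta B(\zeta))\to0$ as $j\to\infty$. To prove this, pick $x$ in the set $\delta B(\zeta)\cap\Omega^*$, which is dense in $\delta B(\zeta)$ since $\delta B(\zeta)\cap\Omega^{*,\ext}=\emptyset$ by \Lemma{deltaBxi}; then $x\in\Omega_j$ for $j$ large. Applying \eqn{bourgain} in $\Omega_j$ (centred at a point of $\d\Omega_j$ close to $\zeta$) and the maximum principle gives $\cH^s_\infty(\d\Omega_j\cap\delta B(\zeta))/(\delta r_{B(\zeta)})^s\lec\omega_{\Omega_j}^x(\cnj{B(\zeta)}\cap\d\Omega_j)$, and the right-hand side tends to $\omega_{\tilde\Omega}^x(\cnj{B(\zeta)}\cap\d\tilde\Omega)=0$ because $\cnj{B(\zeta)}$ is compact and disjoint from $\d\tilde\Omega$ and $\omega_j^x\to\omega_{\tilde\Omega}^x$ by \Lemma{assumend}. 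This is exactly the mechanism in the second half of the proof of \Lemma{deltaBxi}.

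With this in hand I would conclude as follows. Fix $\xi\in\d\tilde\Omega$ and $r>0$, set $r'=r/10$, and let $\eta>0$ be small. For $j$ large, $B(\xi_j',r')\subseteq B(\xi,r)$ and $\cH^s_\infty(B(\xi_j',r')\cap\d\Omega_j)\ge\frac c2(r')^s$. Split this set at distance $\eta$ from $\supp\mu$: its far part $F_j=\{z\in B(\xi_j',r')\cap\d\Omega_j:\dist(z,\supp\mu)\ge\eta\}$ lies, for $j$ large, within $o(1)$ of the compact set $K=\Sigma\cap\cnj{B(\xi,2r)}\cap\{\dist(\cdot,\supp\mu)\ge\eta/2\}\subseteq\Sigma\setminus\supp\mu$. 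Covering $K$ by finitely many balls $B(\zeta_k,\delta\eta/8)$, $\zeta_k\in K$ (enlarging slightly to absorb the $o(1)$), we get $F_j\subseteq\bigcup_k B(\zeta_k,\delta\eta/8)$ for $j$ large, so $\cH^s_\infty(F_j)\le\sum_k\cH^s_\infty(\d\Omega_j\cap B(\zeta_k,\delta\eta/8))\to0$ by the sub-lemma (each $B(\zeta_k,\delta\eta/8)\subseteq\delta B(\zeta_k)$ since $\dist(\zeta_k,\supp\mu)\ge\eta/2$). Thus $\cH^s_\infty(F_j)<\frac c4(r')^s$ for $j$ large, and by subadditivity of $\cH^s_\infty$ the complementary part of $B(\xi_j',r')\cap\d\Omega_j$ has content $>\frac c4(r')^s$; since it lies in $\{z:\dist(z,\supp\mu\cap\cnj{B(\xi,2r)})<\eta\}$ we get $\cH^s_\infty(\{z:\dist(z,\supp\mu\cap\cnj{B(\xi,2r)})<\eta\})\ge\frac c4(r')^s$. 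Letting $\eta\downarrow0$ and using that the $\cH^s_\infty$-content of the $\eta$-neighbourhood of a compact set decreases to its content, we obtain $\cH^s_\infty(\d\tilde\Omega\cap\cnj{B(\xi,2r)})\ge\frac c4(r/10)^s$; replacing $r$ by $r/2$ yields the asserted bound $\cH^s_\infty(B(\xi,r)\cap\d\tilde\Omega)\ge c'r^s$ with $c'=c'(s,d,c)$.

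The main obstacle is precisely the sub-lemma, that is, the fact that $\Sigma$ may be strictly larger than $\supp\mu$: the content certifying the lower bound on $\d\Omega_j$ could in principle concentrate, in the blow-up limit, on the ``collapsing'' set $\Sigma\setminus\supp\mu$, which lies in the interior of $\tilde\Omega$ and hence contributes nothing to $\d\tilde\Omega$. Ruling this out is where one genuinely needs that near such points the harmonic measure of $\Omega_j$ is already negligible (as in \Lemma{deltaBxi}), so that \Lemma{bourgain} forces the content of $\d\Omega_j$ there to vanish as well; the rest is bookkeeping with the Hausdorff convergence $\d\Omega_j\to\Sigma$ and the subadditivity of Hausdorff content.
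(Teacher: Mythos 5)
Your argument is correct, and its engine is the same as the paper's: after the density-point reduction and rescaling give $\cH^{s}_{\infty}(B(\xi_j',\tau)\cap \d\Omega_{j})\gtrsim c\tau^{s}$ for large $j$, the decisive step in both proofs is that \Lemma{bourgain} combined with the weak convergence $\omega_{j}^{x}\to\omega_{\tilde{\Omega}}^{x}$ prevents this content from concentrating on the collapsing set $\Sigma\setminus\supp\mu$. Where you genuinely diverge is in how the surviving content is handed to $\d\tilde{\Omega}$. The paper chooses $s$-Frostman measures $\nu_{j}$ supported on $B(\xi,r)\cap\d\Omega_{j}$ with $\nu_{j}(B(\xi,r))\gtrsim r^{s}$, extracts a weak limit $\nu$, and runs the Bourgain implication \emph{forward}: every $\zeta\in\supp\nu$ sees positive content of $\d\Omega_{j}$ at every scale, hence positive $\omega_{j}$ and then positive $\tilde{\omega}$ near $\zeta$, so $\supp\nu\subseteq\supp\tilde{\omega}=\d\tilde{\Omega}$ and $\cH^{s}_{\infty}(B(\xi,r)\cap\d\tilde{\Omega})\gtrsim\nu(B(\xi,r))\gtrsim r^{s}$. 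You run the same implication in contrapositive form (your sub-lemma: near $\Sigma\setminus\supp\mu$ the limiting harmonic measure vanishes, so \Lemma{bourgain} forces $\cH^{s}_{\infty}(\d\Omega_{j}\cap\delta B(\zeta))\to 0$) and then aggregate by subadditivity, a finite cover of the compact collapsing set, and the outer regularity $\cH^{s}_{\infty}(K)=\lim_{\eta\downarrow 0}\cH^{s}_{\infty}(\{z:\dist(z,K)<\eta\})$ for compact $K$. Both routes are sound; the Frostman device makes the passage to the limit a one-line weak-convergence statement, whereas yours avoids Frostman's lemma at the price of the covering and semicontinuity bookkeeping. The only spots needing tightening are routine: the ball in \Lemma{bourgain} must be re-centred at some $\zeta_{j}\in\d\Omega_{j}$ with $\zeta_{j}\to\zeta$ and slightly enlarged so that $\delta B_{j}\supseteq\delta B(\zeta)$ and your fixed $x\in\delta B(\zeta)\cap\Omega^{*}$ lies in $\delta B_{j}\cap\Omega_{j}$, and the compact set $K$ in the covering step should be placed inside a fixed $\Sigma_{n}$ so that compactness is immediate.
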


\begin{proof}
Let $\xi\in \d\tilde{\Omega}$ and $r>0$. Set 
\[E_{n}=\{\xi\in \d\Omega: \cH^{s}_{\infty}(B(\xi,r)\cap \d\Omega)>r^{s}\mbox{ for }r<n^{-1}\}.\]
Then $E=\bigcup E_{n}$. Let $\xi_{0}$ be a point of density in some $E_{n}$ with respect to $\omega_{\Omega}^{x_{0}}$. Let $\xi\in \d\tilde{\Omega}=\supp \mu$, $r>0$. Then by \Corollary{hitsball} there is $\xi_{j}\in E_{n} \cap T_{j}^{-1}(B(\xi,r/2))$, and thus if $j$ is large enough so that $rr_{j}/2<1/n$,
\begin{align*}
\cH^{s}_{\infty}(B(\xi,r)\cap \d\Omega_{j})
& = r_{j}^{-s}\cH_{\infty}^{s}(T_{j}^{-1}(B(\xi,r))\cap \d\Omega)\\
& \geq r_{j}^{-s}\cH^{d}_{\infty}(B(\xi_{j},rr_{j}/2)\cap \d\Omega)
\geq \frac{cr^{s}}{2^{s}}
.\end{align*}
Let $\nu_{j}$ be an $s$-Frostmann measure with support in $B(\xi,r)\cap \d\Omega_{j}$ so that $\nu_{j}(B(\xi,r))\gec \frac{cr^{s}}{2^{s}}$. By passing to a subsequence, we can assume $\nu_{j}$ converges weakly to another $s$-Frostmann measure $\nu$ and $\nu(\cnj{B(\xi,r)})\gec \frac{cr^{s}}{2^{s}}$. If $\zeta\in \supp \nu$, then for all $t>0$, $\nu_{j}(B(\zeta,2t))\geq \nu(B(\zeta,t))/2>0$ for $j$ large enough, and so for $j$ large
\[
\cH^{s}_{\infty}(B(\zeta,2t)\cap \d\Omega_{j})
\gec \nu_{j}(B(\zeta,2t))\geq \nu(B(\zeta,t))/2>0.\]
Thus, there is $\zeta_{j}\in \d\Omega_{j}\cap B(\zeta,2t)$, and so
\[ \cH^{s}_{\infty} (B(\zeta_{j},4t)\cap \d\Omega_{j})/(4t)^{s}\geq \frac{\nu(B(\zeta,t))}{2(4t)^{s}}>0.\]
Hence, by \Lemma{bourgain}, for all $j$ large,

\begin{align*}
\omega_{j}^{x_{B(\zeta_{j},4t)}} & (B((\zeta,4t(1+\delta^{-1})))
\geq \omega_{j}^{x_{B(\zeta_{j},4t)}}(B(\zeta_{j},4t\delta^{-1}))\\
& \gec  \cH^{s}_{\infty} (B(\zeta_{j},4t)\cap \d\Omega_{j})/(4t)^{s}
  \gec \frac{\nu(B(\zeta,t))}{2(4t)^{s}}>0.\end{align*}
and hence $\tilde{\omega}^{x_{B(\zeta_{j},4t)}}(B((\zeta,4t(1+\delta^{-1})))>0$ for all $t>0$, which implies $\zeta\in \supp \tilde{\omega}=\d\tilde{\Omega}$. This implies, finally, that
\[
\cH^{s}_{\infty}(B(\xi,r)\cap \d\tilde{\Omega})
\gec \nu(B(\xi,r))\gec \frac{cr^{s}}{2^{s}}.\]
Since this holds for all $\xi\in \d\tilde{\Omega}$ and $r>0$, this finishes the proof.

\end{proof}

This finishes the proof of \LemmaI

\section{Proof of \TheoremI}

In this section, all implied constants are assumed to depend on the uniformity constant and $d$. Let 
\[F=\{\xi\in \d\Omega:0
<\theta^{\alpha,*}(\omega_{\Omega}^{x_{0}},\xi)<\infty, \;\;\; \xi \mbox{ non-degenerate}\}.\]

We fix $\xi_{0}\in F$ such that the conclusions of \Lemma{upperlim} and  \LemmaI hold for $\mu_{j}=T_{\xi_{0},r_{j}\#}\omega^{x_{0}}_{\Omega}/\omega_{\Omega}^{x_{0}}(B(\xi_{0},r_{j}))$. Then for balls $B'\subseteq B$ centered on $\d\tilde{\Omega}$,
\begin{equation}
\frac{\omega_{\tilde{\Omega}}^{x_{B}}(B')}{\omega_{\tilde{\Omega}}^{x_{B}}(B)}\sim \frac{\mu(B')}{\mu(B)}\lesssim \frac{r_{B'}^{\alpha}}{\mu(B)}.\label{e:wsimr^a}
\end{equation}


Pick $\tilde{B}\subseteq \tilde{\Omega}$ so that there is $\zeta\in \d\tilde{B}\cap \d\tilde{\Omega}$. Let $\tilde{x}$ be the center of $\tilde{B}$. We claim that if $\alpha>d$, then the normal derivative of $G_{\tilde{B}}(\tilde{x},\cdot)$ at $\zeta$ is zero. Let $x\in [\zeta,\tilde{x}]\cap \d B(\zeta,r_{\tilde{B}}/2)$. Let $B=B(\zeta,2r_{\tilde{B}})$ and $B'=B(\zeta,|x-\zeta|)$, see Figure \ref{f:fml}.

\begin{figure}[h]
\includegraphics[width=200pt]{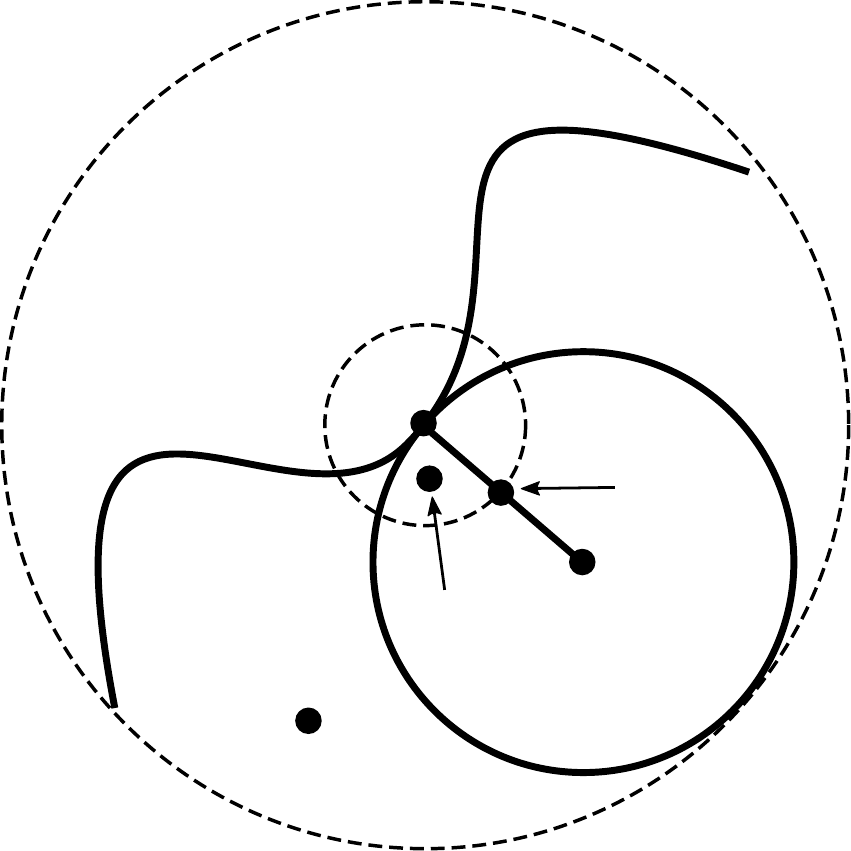}
\begin{picture}(0,0)(200,0)
\put(142,83){$x$}
\put(95,55){$x_{B'}$}
\put(130,55){$\tilde{x}$}
\put(175,100){$\tilde{B}$}
\put(25,150){$B$}
\put(75,125){$B'$}
\put(65,20){$x_{B}$}
\put(3,80){$\d\tilde{\Omega}$}
\put(90,110){$\zeta$}
\end{picture}
\caption{The balls $\tilde{B}$, $B$, and $B'$ }
\label{f:fml}
\end{figure}

Since $G_{\tilde{B}}(\tilde{x},\zeta)=0$ and because $G_{\tilde{B}}\leq G_{\tilde{\Omega}}$ by the maximum principle, we get
\[
\frac{|G_{\tilde{B}}(\tilde{x},x)-G_{\tilde{B}}(\tilde{x},\zeta)|}{|x-\zeta|}
 =\frac{G_{\tilde{B}}(\tilde{x},x)}{|x-\zeta|}
\leq \frac{G_{\tilde{\Omega}}(\tilde{x},x)}{|x-\zeta|}.
\]
Now we apply the Harnack chain condition in each variable of the Green function and use \Lemma{w<G} to get
\[\frac{G_{\tilde{\Omega}}(\tilde{x},x)}{|x-\zeta|}
\sim \frac{G_{\tilde{\Omega}}(x_{B},x_{B'})}{|x-\zeta|}\\
\sim \frac{|x-\zeta|^{1-d}\omega_{\tilde{\Omega}}^{x_{B}}(B')}{|x-\zeta|\omega_{\tilde{\Omega}}^{x_{B}}(B)}= \frac{\omega_{\tilde{\Omega}}^{x_{B}}(B')}{|x-\zeta|^{d}\omega_{\tilde{\Omega}}^{x_{B}}(B)}.
\]
Finally, by \eqn{wsimr^a}, we get
\[ \frac{\omega_{\tilde{\Omega}}^{x_{B}}(B')}{|x-\zeta|^{d}\omega_{\tilde{\Omega}}^{x_{B}}(B)}
\lec \frac{r_{B'}^{\alpha}}{|x-\zeta|^{d}\mu(B)}
=\frac{|x-\zeta|^{\alpha-d}}{\mu(B)}.\]
Combining these estimates, we get
\[
\frac{|G_{\tilde{B}}(\tilde{x},x)-G_{\tilde{B}}(\tilde{x},\zeta)|}{|x-\zeta|}
\lec  \frac{|x-\zeta|^{\alpha-d}}{\mu(B)}\]
 so  as $x\rightarrow \zeta$ along $[\zeta,\tilde{x}]$, this shows that the normal derivative at $\zeta$ must be zero, as wished. But $G_{\tilde{B}}(\tilde{x},\cdot)=|\tilde{x}-\cdot|^{1-d}-r_{\tilde{B}}^{1-d}$ on $\tilde{B}$, which clearly has nonzero normal derivative at $\zeta$, and this gives a contradiction. Thus, $\alpha\leq d$. \\

\section{Proof of \TheoremII}
First assume $\theta^{\alpha}_{*}(\omega_{\Omega}^{x_{0}},\xi)\in (0,\infty) <\infty$ for each $\xi\in E$ and $\omega_{\Omega}^{x_{0}}(E)>0$. Then it is not hard to show that $E$ has $\sigma$-finite $\cH^{\alpha}$-measure. Indeed, note that if 
\[E_{k,\ell}=\{\xi\in E: \omega_{\Omega}^{x_{0}}(B(\xi,r))>r^{\alpha}/\ell \mbox{ for }r\in (0,k^{-1}]\}\]
then $E=\bigcup_{k,\ell}E_{k}$. Fix $k\in \bN$ and let $r<k^{-1}$ By the Besicovitch covering theorem, we may find a covering of $E_{k,\ell}$ by balls $B_{j}$ of bounded overlap of radii $r$ so that each $B_{j}$ is centered on $E_{k,\ell}$. Then
\[
\cH^{\alpha}_{r}(E_{k,\ell})\leq \sum r_{B_{j}}^{\alpha}\leq k\sum \omega_{\Omega}^{x_{0}}(B_{j})\lec_{d} k\omega_{\Omega}^{x_{0}}\ps{\bigcup B_{j}}\leq 1.\]
Letting $r\rightarrow 0$ shows $E_{k,\ell}$ has finite $\alpha$ measure. If $\alpha\leq d-1$, then each $E_{k,\ell}$ has finite $(d-1)$-measure. This implies $E_{k,\ell}$ is polar \cite[Theorem 5.9.4]{AG} and polar sets have harmonic measure zero \cite[Theorem 6.5.5]{AG}, thus $\omega(E_{k,\ell} )=0$ for each $k,\ell$, and hence $\omega_{\Omega}^{x_{0}}=0$, we get a contradiction since $\omega_{{\Omega}}^{x_{0}}(E)>0$. Hence $\alpha>d-1$.

Now assume \eqn{content}. Note that \eqn{content} and \Lemma{bourgain} imply each $\xi\in E$ is non-degenerate. Again, by \LemmaI, we can find a tangent measure and domain $\tilde{\Omega}$ satisfying 
\begin{equation}
\frac{\omega_{\tilde{\Omega}}^{x_{B}}(B')}{\omega_{\tilde{\Omega}}^{x_{B}}(B)}\sim \frac{\mu(B')}{\mu(B)}\gec \frac{r_{B'}^{\alpha}}{r_{B}^{\alpha}}.\label{e:wsimr^a2}
\end{equation}
and so that condition (5) of \LemmaI holds. This implies $\dim \d\tilde{\Omega}\leq \alpha$, but condition (5) implies $\dim\d\tilde{\Omega}\geq s$, and so $\alpha\geq s$.

%

\def\cprime{$'$}

\end{document}